\numberwithin{equation}{section}
\newcommand{\be}{\begin{eqnarray}}
\newcommand{\ee}{\end{eqnarray}}
\newcommand{\ce}{\begin{eqnarray*}}
\newcommand{\de}{\end{eqnarray*}}
\newtheorem{theorem}{Theorem}[section]
\newtheorem{lemma}[theorem]{Lemma}
\newtheorem{remark}[theorem]{Remark}
\newtheorem{definition}[theorem]{Definition}
\newtheorem{proposition}[theorem]{Proposition}
\newtheorem{Examples}[theorem]{Example}
\newtheorem{corollary}[theorem]{Corollary}
\newtheorem{assumption}[theorem]{Assumption}
\newenvironment{proof of theorem 1.2 and 1.3}{{\it Proof of Theorem 1.2 and 1.3}.}{{\hfill 	
$\square$\hskip - \parfillskip}}
\newenvironment{proof of theorem 1.4}{{\it Proof of Theorem 1.4}.}{{\hfill 	
		$\square$\hskip - \parfillskip}}
\newenvironment{proof of corollaries 6.2 and 6.3}{{\it Proof of Corollaries \ref{c6.2} and \ref{c6.3}}.}{{\hfill 	
		$\square$\hskip - \parfillskip}}
\newenvironment{proof of theorem 3.1 and 3.2}{{\it Proof of Theorems \ref{t3.1} and \ref{t3.2}}.}{{\hfill 	
		$\square$\hskip - \parfillskip}}
\newcommand{\rmnum}[1]{\romannumeral #1}
\newcommand{\Rmnum}[1]{\expandafter\@slowromancap\romannumeral #1@}
\def\bg{\bar G}
\def\wg{\widetilde G}
\def\eps{\varepsilon}
\def\e{\mathrm{e}}
\def\a{\alpha}
\def\om{\omega}
\def\p{\partial}
\def\g{\gamma}
\def\l{\lambda}
\def\la{\langle}
\def\ra{\rangle}
\def\[{{\Big[}}
\def\]{{\Big]}}
\def\<{{\langle}}
\def\>{{\rangle}}
\def\({{\Big(}}
\def\){{\Big)}}
\def\bx{{\mathbf{x}}}
\def\min{{\mathord{{\rm min}}}}
\def\={&\!\!=\!\!&}
\def\cL{{\mathcal L}}
\def\mR{{\mathbb R}}
\def\mS{{\mathbb S}}
\def\1{{\mathbf{1}}}
\def\sF{{\mathscr F}}
\def\geq{\geqslant}
\def\leq{\leqslant}
\def\ge{\geqslant}
\def\le{\leqslant}
\def\k{\kappa}
\def\eps{\varepsilon}
\def\e{\mathrm{e}}
\def\a{\alpha}
\def\om{\omega}
\def\p{\partial}
\def\g{\gamma}
\def\l{\lambda}
\def\la{\langle}
\def\ra{\rangle}
\def\[{{\Big[}}
\def\]{{\Big]}}
\def\<{{\langle}}
\def\>{{\rangle}}
\def\({{\Big(}}
\def\){{\Big)}}
\def\bx{{\mathbf{x}}}
\def\min{{\mathord{{\rm min}}}}
\def\={&\!\!=\!\!&}
\def\bt{\begin{theorem}}
\def\et{\end{theorem}}
\def\bl{\begin{lemma}}
\def\el{\end{lemma}}
\def\br{\begin{remark}}
\def\er{\end{remark}}
\def\bx{\begin{Examples}}
\def\ex{\end{Examples}}
\def\bd{\begin{definition}}
\def\ed{\end{definition}}
\def\bp{\begin{proposition}}
\def\ep{\end{proposition}}
\def\bc{\begin{corollary}}
\def\ec{\end{corollary}}
\def\geq{\geqslant}
\def\leq{\leqslant}
\def\ge{\geqslant}
\def\le{\leqslant}
 \def\nn{\nabla}
\def\<{\langle} \def\>{\rangle}
\def\bpf{\begin{proof}}
\def\epf{\end{proof}}
\begin{document}
	
\title{A flow method for curvature equations}
\thanks{\it {The research is partially supported by NSFC (Nos. 11871053 and 12261105).}}
\author{Shanwei Ding and Guanghan Li}

\thanks{{\it 2020 Mathematics Subject Classification: 53E99, 35K55.}}
\thanks{{\it Keywords: anisotropic flow, asymptotic behaviour, curvature equation, prescribed curvature problem}}

\address{School of Mathematics and Statistics, Wuhan University, Wuhan 430072, China.
}

\begin{abstract}
We consider a general curvature equation $F(\k)=G(X,\nu(X))$, where $\k$ is the principal curvature of the hypersurface $M$ with position vector $X$. It includes the classical prescribed curvature measures problem and area measures problem. However, Guan-Ren-Wang \cite{GRW} proved that the $C^2$ estimate fails usually for general function $F$. Thus, in this paper, we pose some additional conditions of $G$ to get existence results by a suitably designed parabolic flow. In particular, if  $F=\sigma_{k}^\frac{1}{k}$ for $\forall 1\le k\le n-1$, the existence result has been derived in the famous work \cite{GLL} with $G=\psi(\frac{X}{|X|})\la X,\nu\ra^{\frac1k}{|X|^{-\frac{n+1}{k}}}$. This result will be generalized to $G=\psi(\frac{X}{|X|})\la X,\nu\ra^\frac{{1-p}}{k}|X|^\frac{{q-k-1}}{k}$ with $p>q$ for arbitrary $k$ by a suitable auxiliary function. 
The uniqueness of the solutions in some cases is also studied.


\end{abstract}

\maketitle
\setcounter{tocdepth}{2}
\tableofcontents

\section{Introduction}
This paper concerns a longstanding problem of the curvature equation in general form
\begin{equation}\label{1.1}
F(\k(X))=G(X,\nu(X)) \quad \forall X\in M,
\end{equation}
where $F$ satisfies Assumption \ref{a1.3}, $\nu(X)$ and $\k(X)$ are the outer normal and principal curvatures of hypersurface $M\subset \mR^{n+1}$ at $X$, respectively.

The $k$th elementary symmetric function $\sigma_k$ is defined by
$$\sigma_k(\k_1,...,\k_n)=\sum_{1\leq i_1<\cdot\cdot\cdot<i_k\leq n}\k_{i_1}\cdot\cdot\cdot\k_{i_k},$$
and let $\sigma_0=1$.

Equation (\ref{1.1}) is associated with many important geometric problems. In particular, if $F(\k)=\sigma_{k}^\frac{1}{k}$ and $F(\k)=(\frac{\sigma_{n}}{\sigma_{n-k}})^\frac{1}{k}$, this equation will convert into the corresponding Christoffel-Minkowski problem for the general curvature measures and general area measures respectively. More detailed studies can be found in \cite{GLL,SR}.

If $F(\k)=\sigma_{k}^\frac{1}{k}$, equation (\ref{1.1}) has been studied extensively; it is a special type of general equation systematically studied by Alexandrov in \cite{AA}. With the following appropriate Assumption \ref{a1.1} and  Assumption \ref{a1.2} on the function $G$ as considered in \cite{CNS2,TW}, one may establish the $C^0$ and $C^1$ estimates of (\ref{1.1}) with suitable $G$.
\begin{assumption}\label{a1.1}
Suppose $G\in C^2(\mR^{n+1}\times\mS^n)$ is a positive function if $\la X,\nu\ra>0$, and
there are two positive constants $r_1<r_2$ such that
\begin{equation}\label{1.2}
\begin{cases}
G(X,\frac{X}{|X|})\ge\frac{F(1,\cdots,1)}{r_1} \quad \text{ for }|X|=r_1,\\[3mm]
G(X,\frac{X}{|X|})\le\frac{F(1,\cdots,1)}{r_2} \quad \text{ for }|X|=r_2.
\end{cases}
\end{equation}
\end{assumption}
\begin{assumption}\label{a1.2}
	Suppose $G\in C^2(\mR^{n+1}\times\mS^n)$ is a positive function and
for any fixed unit vector $\nu$,
\begin{equation}\label{1.3}
\frac{\p}{\p\rho}(\rho G(X,\nu))\le0\quad\text{  for }r_1<\rho<r_2, \text{ where }|X|=\rho.
\end{equation}
\end{assumption}
It is of great interest to establish the $C^2$-estimate for equation (\ref{1.1}) both in geometry and in PDEs. $C^2$-estimates are known in many special cases. For example, when $F(\k)=\sigma_{k}^\frac{1}{k}$, equation (\ref{1.1}) is quasi-linear for $k=1$, and the $C^2$-estimate follows from the classical theory of quasi-linear PDEs. The equation is of Monge-Amp$\grave{e}$re type if $k=n$, and the $C^2$-estimate in this case for general $G(X,\nu)$ is due to Caffarelli-Nirenberg-Spruck \cite{CNS}. For $G$ independent of the normal vector $\nu,$ the $C^2$-estimate was proved by Caffarelli-Nirenberg-Spruck \cite{CNS2} for a general class of fully nonlinear operators $F$, including  $F(\k)=\sigma_{k}^\frac{1}{k}$ and $F(\k)=(\frac{\sigma_{k}}{\sigma_{l}})^\frac{1}{k-l}$. For $G$ in (\ref{1.1}) dependent only on $\nu$, the $C^2$-estimate was proved in \cite{GG}. The $C^2$-estimate was also proved for equations of the prescribed curvature measures problem in \cite{GLL,GLM}, where $G(X,\nu)=\la X,\nu\ra \tilde{G}(X)$. Here $\la\cdot,\cdot\ra$ is the standard inner product in $\mR^{n+1}$.

Recently Guan-Ren-Wang \cite{GRW} obtained the global curvature estimate of the closed convex hypersurface and the star-shaped $2$-convex hypersurface. Li-Ren-Wang \cite{LRW} substituted the convex condition by $(k+1)$-convex condition for any Hessian equations and derived the Pogorelov type interior $C^2$-estimate. For the case $k=n-1$ and $k=n-2$, Ren-Wang \cite{RW1,RW2} completely solved the longstanding problem, that they obtained the global curvature estimates of $n-1$ and $n-2$ convex solutions for $n-1$ and $n-2$ Hessian equations. Chen-Li-Wang \cite{CLW} established the global curvature estimate for the prescribed curvature problem in warped product spaces. Li-Ren-Wang \cite{LRW} considered the global curvature estimate of convex solutions for a class of general Hessian equations. Spruck-Xiao \cite{SX} established the curvature estimate for the prescribed scalar curvature problem in space forms and gave a simple proof of $C^2$-estimate for the same equation in the Euclidean space.

If $F(\k)=(\frac{\sigma_{n}}{\sigma_{n-k}})^\frac{1}{k}$, the hypersurface $M$ is naturally strictly convex. At this time, (\ref{1.1}) can be converted into
\begin{equation}\label{1.4}
\sigma_{k}(\l)=G(X,\nu),
\end{equation}
where $\l_i=\frac{1}{\k_i}$ are the principal curvature radii of hypersurface. In particular, (\ref{1.4}) is $L^p$ dual Christoffel-Minkowski problem posed in \cite{DL3} if $G=\psi(\nu)\la X,\nu\ra^{p-1}|X|^{k+1-q}$
. If $k=n$ or $q=k+1$, it will be converted into $L^p$ dual Minkowski problem or $L^p$ Christoffel-Minkowski problem respectively. If $k=n$ and $G=\psi(\nu)\varphi(\la X,\nu\ra)\phi(X)$, equation (\ref{1.4}) is dual Orlicz Minkowski problem posed in \cite{GHW1,GHW2}. More beautiful examples and previous known results of these
problems can be seen in \cite{LE,LO,LYZ2,HLY2,LL,GM,HMS,GX,CL,HZ,GHW1,GHW2,CCL,CHZ,DL3,DL4} etc..

Before stating the main theorems, we would like to recall the existing counterexamples. If $F=(\frac{\sigma_{k}}{\sigma_{l}})^\frac{1}{k-l}$ in (\ref{1.1}), where $1\le l<k\le n$, Guan-Ren-Wang \cite{GRW} proved that the $C^2$-estimate fails for strictly convex hypersurface. Furthermore, if $F(\k)=(\frac{\sigma_{n}}{\sigma_{n-k}})^\frac{1}{k}(\k)$ and $G(X,\nu)=\psi(\nu)\la X,\nu\ra^\frac{1-p}{k}$, Ivaki \cite{IM} proved that a convexity condition of $G$ is essential to ensure that a corresponding flow smoothly converges to a solution of $L^p$ Christoffel-Minkowski problem. Thus, in order to obtain the $C^2$-estimate of (\ref{1.1}) we believe that it is necessary to add some conditions to $G$.

In order to obtain existence results we make the following assumption. In this paper, if there is no special statement, we say general $F$ to mean a function $F$ satisfies Assumption \ref{a1.3}. We call hypersurface $M$ is admissible hypersurface if its principal curvatures $\k(M)\in\Gamma$, where $\Gamma$ is defined in the following assumption.
\begin{assumption}\label{a1.3}
	Let $\Gamma\subseteq\mathbb{R}^n$ be a symmetric, convex, open cone containing
	\begin{equation}\label{1.5}
		\Gamma_+=\{(\k_i)\in\mathbb{R}^n:\k_i>0\},
	\end{equation}
	and suppose that $F$ is positive in $\Gamma$, homogeneous of degree $1$, and concave with
	\begin{equation}\label{1.6}
		\frac{\p F}{\p\k_i}>0,\quad F\vert_{\p\Gamma}=0,\quad F^{-\beta}(1,\cdots,1)=\eta.
	\end{equation}
\end{assumption}
Recall Assumption \ref{a1.2}. There are many examples whose are positive functions in $\mR^{n+1}\times\mS^n$, such as $\psi(\nu)\phi(X)$, $\phi(X^T)$, $\phi(\nu,\rho)$ etc., where $X^T=X-\la X,\nu\ra \nu$. However, if $G(x,y)=\la x,y\ra^\a\phi(x)$, $G$ may not preserve positive in $\mR^{n+1}\times\mS^n$, where $\phi(x)$ is a positive function. In fact, if $\la x_0,y_0\ra>0$ and $\a$ is odd, $G(-x_0,y_0)<0$. Hence, we hope to consider the following functions, which is only required to some weaker conditions. We let 
$G(X,\nu)$ be a positive function in $\mR^{n+1}\times\mS^n$ if $\la X,\nu\ra>0$, which has been considered in \cite{DL4}. However, if the hypersurface $M$ is not convex, the bounds of this function and its derivatives are hard to be derived even if we have $C^0$-estimate. As an alternative, we consider $G=\phi(X,\nu)\varphi(\la X,\nu\ra)$ to derive the $C^1$-estimate, where $\phi$ is a positive functions in $\mR^{n+1}\times\mS^n$, $\varphi:\mR_+\rightarrow\mR_+$. Obviously $G=\phi(X,\nu)\varphi(\la X,\nu\ra)$ is a positive function if $\la X,\nu\ra>0$.

Compared with Assumption \ref{a1.2}, we hope to derive the $C^1$-estimate in a wider case and therefore make the following assumptions. 
In this paper, if there is no special statement, $G$ refers to a positive function in $\mR^{n+1}\times\mS^n$ if $\la X,\nu\ra>0$.
\begin{assumption}\label{a1.4}
Let $G=\phi(X,\nu)\varphi(\la X,\nu\ra)$ be a smooth function, where $\phi$ is a positive functions in $\mR^{n+1}\times\mS^n$, $\varphi:\mR_+\rightarrow\mR_+$. We pose the following conditions.

(\rmnum{1}) $\varphi=1$, and
for any fixed unit vector $\nu$, if $\la X,\nu\ra>0$,
\begin{equation*}
	\frac{\p}{\p\rho}\(\rho \phi(X,\nu)\)\le0\quad\text{  for }r_1<\rho<r_2, \text{ where }|X|=\rho;
\end{equation*}

(\rmnum{2}) for any $u=\la X,\nu\ra>0$, there exists a positive constant $\eps$, s.t.

$$u(\log\varphi)'=\frac{u\varphi_u}{\varphi}\le-\eps,\;\;
{\mbox{where}} \;\;\varphi_u=\frac{\partial \varphi}{\p u};$$

(\rmnum{3}) for any fixed unit vector $\nu$, if $\la X,\nu\ra>0$, there exists a positive constant $\eps$, s.t.
\begin{equation*}
\frac{\p}{\p\rho}\(\rho \phi(X,\nu)\varphi(\la X,\nu\ra)\)\le-\eps G\quad\text{  for }r_1<\rho<r_2, \text{ where }|X|=\rho;
\end{equation*}

(\rmnum{4}) $\varphi=1$, and
for any fixed unit vector $\nu$, if $\la X,\nu\ra>0$,
\begin{equation*}
	\frac{\p}{\p\rho}\(\rho \phi(X,\nu)\)<0\quad\text{  for }r_1<\rho<r_2, \text{ where }|X|=\rho;
\end{equation*}

(\rmnum{5}) $\varphi=1$, $\phi(X,\nu)=\phi(X)$ and
for any fixed unit vector $\nu$, if $\la X,\nu\ra>0$,
\begin{equation*}
	\frac{\p}{\p\rho}\(\rho \phi(X)\)\le0\quad\text{  for }r_1<\rho<r_2, \text{ where }|X|=\rho;
\end{equation*}

(\rmnum{6}) for any $u=\la X,\nu\ra>0$, there exists a positive constant $\eps$, s.t.

$$|u(\log\varphi)'|=|\frac{u\varphi_u}{\varphi}|\ge\eps.$$
\end{assumption}



We state the first result of this paper, which is proved by flow method. We mention that the existence to solutions of this kind problems is usually proved by elliptic method in previous papers.
\begin{theorem}\label{t1.5}
 Let $F$ satisfy Assumption \ref{a1.3} and $G(X,\nu)$ satisfy Assumption \ref{a1.1}. Suppose $G$ satisfies one of (\rmnum{2})-(\rmnum{5}) of Assumption \ref{a1.4}, and if

(1) $d_\nu G(\nu)\le G$;

(2) for any fixed vector $X$, $G$ is strictly log-convex with respect to $\nu$,

i.e. $(\frac{\p^2}{\p\nu^\a\p\nu^\beta}(\log G(X,\nu)))$ is positive definite.

\noindent Then equation (\ref{1.1}) has a star-shaped, smooth admissible  solution $M$ in $\{r_1\le|X|\le r_2\}$.
\end{theorem}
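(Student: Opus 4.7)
The plan is to design a parabolic flow whose stationary solutions are precisely the admissible solutions of (\ref{1.1}), and to produce the required solution as the long-time limit of the flow starting from an appropriate initial hypersurface in the annulus $\{r_1\le|X|\le r_2\}$. A natural choice, compatible with the homogeneity of $F$ and the normalization $F^{-\beta}(1,\ldots,1)=\eta$ from Assumption \ref{a1.3}, is
$$\p_t X=(\eta^{-1/\beta}G(X,\nu)-F(\k))\nu$$
or a suitably normalized logarithmic variant; the stationary condition reads $F(\k)=G(X,\nu)$. Preservation of star-shapedness (positivity of $u=\la X,\nu\ra$) and admissibility ($\k\in\Gamma$) along the flow follow from the maximum principle applied respectively to $u$ and to the smallest eigenvalue of the Weingarten tensor, using the positivity and concavity of $F$ from Assumption \ref{a1.3}.

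The $C^0$ estimate is immediate from Assumption \ref{a1.1}: evaluating the radial velocity at a point where $|X|$ attains its spatial maximum or minimum, the two inequalities in (\ref{1.2}) provide outer and inner barriers that confine the evolving hypersurface to the annulus throughout the flow. The $C^1$ estimate amounts to a uniform positive lower bound on $u=\la X,\nu\ra$ (non-degeneracy of the star-shaped parametrization), and this is the first step where cases (\rmnum{2})--(\rmnum{5}) of Assumption \ref{a1.4} come in. For each of them I would apply the maximum principle to an auxiliary quantity built from $|X|$ and $u$, for instance $|X|^2/u$ or $-\log(u/\rho)$; the sign condition $\p_\rho(\rho\phi\varphi)\le -\eps G$ of (\rmnum{3}), or its analogue in the other cases, produces the decisive negative term at a would-be maximum and rules out the degeneration of $u$.

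The central difficulty is the $C^2$ curvature estimate, which by the counterexample of \cite{GRW} is known to fail without extra structural assumptions on $G$. The two supplementary hypotheses $d_\nu G(\nu)\le G$ and the strict log-convexity of $G$ in $\nu$ are precisely what rescues the argument: applying the maximum principle to $\log\k_{\max}$ plus a suitable test term in $u$ and $|X|$, the concavity of $F$ contributes the good second-order term in the maximal curvature direction, while the strictly positive definite Hessian $(\log G)_{\nu\nu}$ produces a quadratic form with the right sign that absorbs the bad gradient terms generated when the Weingarten tensor is differentiated twice against $G(X,\nu)$; the dilation-type inequality $d_\nu G(\nu)\le G$ then controls the residual first-order piece. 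With $C^0,C^1,C^2$ bounds in place the flow is uniformly parabolic on admissible hypersurfaces, so Krylov--Safonov and Schauder theory yield uniform $C^{k,\alpha}$ bounds and long-time existence; a monotone functional along the flow (of the type $\int_M(\log G-\log F)\,d\mu$ coming from the gradient-flow structure of the problem) then provides smooth subsequential convergence to a stationary solution, which is the required admissible solution of (\ref{1.1}). The $C^2$ estimate is the main obstacle and is the conceptual heart of the argument.
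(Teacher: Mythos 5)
Your overall architecture (flow whose stationary points solve (\ref{1.1}), then $C^0$, $C^1$, $C^2$ estimates, then parabolic regularity) matches the paper, and your description of how the hypotheses $d_\nu G(\nu)\le G$ and log-convexity of $G$ in $\nu$ enter the curvature estimate is essentially the right mechanism. But there is a genuine gap at the convergence step, and it is exactly the point the paper is built around. You evolve in the normal direction, $\p_tX=(\eta^{-1/\beta}G-F)\nu$, and then assert that ``a monotone functional along the flow (of the type $\int_M(\log G-\log F)\,d\mu$ coming from the gradient-flow structure)'' yields subsequential convergence. For a general anisotropic $G(X,\nu)$ and a general $F$ satisfying Assumption \ref{a1.3}, and for non-convex (merely admissible, star-shaped) hypersurfaces, no such monotone quantity is known; the paper states explicitly that the lack of a suitable monotone functional is the obstruction in this setting. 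The paper's way around it is structural: Theorem \ref{t1.5} is deduced from Theorem \ref{t3.1} for the \emph{radial} flow (\ref{1.9}), $\p_tX=(GF^{-\beta}-1)X$, started from an initial hypersurface (e.g.\ a sphere of radius $r_1$ or $r_2$, using (\ref{1.2})) on which $Q-1=GF^{-\beta}-1$ has a sign. A maximum-principle computation (Lemma \ref{l4.3}) shows this sign is preserved, so the radial function $\rho$ is monotone in $t$ and bounded between $r_1$ and $r_2$; monotone convergence of $\rho$ plus the uniform a priori estimates then gives smooth subsequential convergence to a solution of $GF^{-\beta}=1$, with no entropy needed. Without either this monotonicity or a verified monotone functional, your argument does not produce a limit.

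Two further points where your sketch diverges from what actually has to be done. First, the preserved sign of $Q-1$ is not only the convergence device: the paper's $C^1$ estimates (Lemmas \ref{l4.4}--\ref{l4.6}) use the resulting sign of $\Psi(1)-\Psi(Q)$ in the evolution of $\theta=-\log u+\zeta(|X|^2)$, with the choice of $\zeta$ and of the sign of $(Q-1)|_{M_0}$ adapted to whichever of (\rmnum{2})--(\rmnum{5}) holds; your proposed test functions would still need some substitute for this term. Second, preservation of admissibility for a general cone $\Gamma$ is not obtained by a maximum principle for the smallest eigenvalue of the Weingarten map; in the paper it follows from the two-sided bounds on $F$ (Lemma \ref{l4.3}, Corollary \ref{c4.10}) combined with the upper curvature bound and $F\vert_{\p\Gamma}=0$, which confine $\k$ to a compact subset of $\Gamma$. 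Finally, note that Theorem \ref{t3.1} is stated for $uG$ log-convex with $d_\nu G(\nu)/G\le\beta$, and the strict log-convexity hypothesis of Theorem \ref{t1.5} is used to choose $\beta$ so that these conditions hold; some bookkeeping of this reduction is also missing from your outline.
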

\textbf{Remark:} (1) Given the $C^0$ and $C^1$ estimates, we can derive the $C^2$-estimate by conditions (1) and (2) in Theorem \ref{t1.5} if $G(X,\nu)\in C^\infty(B_{r_2}\backslash B_{r_1}\times\mS^n)$ is a positive function with $\la X,\nu\ra>0$. 

(2) We introduce the concept of $\a$-convex (or $\a$-concave) (see \cite{KA}). $f$ is said to be $\a$-convex (or $\a$-concave) for $0<\a<+\infty$ when $f^\a$ is convex (or concave), for $\a=0$ when $\ln f$ is  convex (or concave), for $-\infty<\a<0$ when $f^\a$ is concave (or convex). In other words, $f(x_1,\cdots,x_n)$ is said to be $\a$-convex if $\frac{\p^2 f}{\p x_i\p x_j}+(\a-1)\frac{\p_{x_i}f\p_{x_j}f}{f}$ is semi-positive definite by a simple calculation. We only posed a typical condition in this theorem.
In fact, the condition $G$ is strictly log-convex can be replaced by one of  the following conditions: (a) there exists $\a<\frac{1}{\beta+1}$ and $\beta>0$ such that $uG^\beta$ is $\a$-convex; (b) there exists $\beta>0$ such that  $uG^\beta$ is strictly $\frac{1}{\beta+1}$-convex.

In particular, if $F=\sigma_{k}^\frac{1}{k}$, the existence of solution to (\ref{1.1}) is still open for general $G$ except $k=1,2,n-2,n-1,n$. We give an existence result with some additional conditions of $G$ for arbitrary $2\le k\le n-1$ since the cases $k=1,n$ are easy to be dealt with.
\begin{theorem}\label{t1.6}
Let $F=\sigma_{k}^\frac{1}{k}$ where $2\le k\le n-1$, and $G(X,\nu)$ satisfy Assumption \ref{a1.1}. Suppose $G$ satisfies one of Assumption \ref{a1.4}, $d_\nu G(\nu)\le G$ and if either

(1) for any fixed vector $X$, $\frac{G^{k}}{\la X,\nu\ra}$ is log-convex with respect to $\nu$,

i.e. $(\frac{\p^2}{\p\nu^\a\nu^\beta}(\log\frac{G^{k}}{\la X,\nu\ra}))$ is semi-positive definite; 

or (2) there exists $0<\beta<1$ such that $uG^{-\beta}$ is $\frac{1}{1-\beta}$-concave.

\noindent Then equation (\ref{1.1}) has a star-shaped, smooth, strictly $k$-convex solution $M$ in $\{r_1\le|X|\le r_2\}$.
\end{theorem}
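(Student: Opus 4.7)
The plan is to prove existence by constructing a suitable parabolic curvature flow whose stationary points coincide with solutions of (\ref{1.1}) and carrying out the standard a priori estimate / long-time existence / convergence program, with the additional structural assumptions on $G$ used precisely to force the $C^{2}$-estimate through in the regime where it generically fails.

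First I would choose a flow of the form $\partial_t X = (G^{-\beta}F^\beta - 1)\nu$ (or an equivalent rescaling), which at a stationary point recovers $F(\kappa)=G$, is parabolic on the class of strictly $k$-convex star-shaped hypersurfaces (since $F=\sigma_k^{1/k}$ is concave and increasing in $\Gamma_k$), and makes the radial function $\rho$ (where $X=\rho z$, $z\in\mathbb{S}^n$) satisfy a scalar parabolic equation. The $C^0$-estimate then follows from Assumption \ref{a1.1}: applied at the maximum and minimum of $\rho$, the barrier conditions force $r_1\le\rho\le r_2$ to be preserved for all time via the maximum principle. The $C^1$-estimate (a bound on $|\nabla\rho|/\rho$, or equivalently on the support function $u=\langle X,\nu\rangle$ away from zero) is obtained by applying the maximum principle to an auxiliary quantity tailored to whichever of (ii)--(v) in Assumption \ref{a1.4} holds; in each of these cases the sign condition on $\partial_\rho(\rho G)$ (or on $u(\log\varphi)'$) is exactly what is needed to close the maximum-principle argument.

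Step 4 is the crucial and most delicate step: the $C^2$-estimate. By \cite{GRW} this fails for general $G(X,\nu)$ when $F=\sigma_k^{1/k}$ with $2\le k\le n-1$, so the hypotheses (1) and (2) of Theorem \ref{t1.6} must be exploited decisively. I would consider the maximum over the evolving hypersurface of the largest principal curvature $\kappa_{\max}$, multiplied by an auxiliary factor of the form $e^{\Phi(u)+\Psi(|X|^2)}$ with $\Phi$, $\Psi$ to be chosen. Evolving this quantity along the flow and using $F=\sigma_k^{1/k}$ together with the Gauss and Codazzi equations produces, besides controllable first-order terms and ``good'' concavity terms coming from $\sigma_k^{1/k}$, a collection of ``bad'' second-order terms in $\nu$ involving $G_{\nu\nu}$ and $G_\nu\otimes G_\nu$. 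Under condition (1), expanding $(\log(G^k/u))_{\nu\nu}\ge 0$ and inserting into the inequality for $\partial_t\kappa_{\max}$ exactly cancels these bad terms after choosing $\Phi(u)=-\log u$; under condition (2), the $\tfrac{1}{1-\beta}$-concavity of $uG^{-\beta}$ provides the analogous algebraic identity with $\Phi$ depending on $\beta$. In both cases the hypothesis $d_\nu G(\nu)\le G$ controls the gradient-in-$\nu$ terms that enter via $X_t\cdot G_\nu$, so that a maximum-principle inequality of the type $0\le -c\kappa_{\max}^2+C\kappa_{\max}+C$ can be closed. This yields a uniform upper bound on $\kappa_{\max}$; combined with the $C^0$ and $C^1$ bounds and the admissibility $\kappa\in\Gamma_k$ this gives a two-sided eigenvalue bound and hence uniform parabolicity.

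Finally, since $F=\sigma_k^{1/k}$ is concave, Evans--Krylov and standard bootstrapping upgrade the estimates to $C^{m,\alpha}$ for all $m$, so the flow exists for all time and its trajectories lie in a fixed compact set in any $C^m$ norm. To obtain the stationary solution I would produce a monotone functional along the flow (a standard choice is $\int_{\mathbb{S}^n}(\cdot)\,d\sigma$ paired with the primitive of the speed in $\rho$, which is monotone because the chosen speed is decreasing in $\rho$ by Assumption \ref{a1.4}), deduce that $\int_0^\infty |\partial_t\rho|\,dt<\infty$ along suitable test quantities, and extract a subsequential smooth limit; any such limit is a solution of (\ref{1.1}), star-shaped in the annulus $\{r_1\le|X|\le r_2\}$ and strictly $k$-convex by preservation of admissibility. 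The main obstacle throughout is the curvature estimate in Step 4, and the two alternative convexity conditions on $G$ are precisely the algebraic inputs that make that step succeed.
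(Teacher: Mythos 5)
Your overall skeleton (design a parabolic flow whose stationary points solve (\ref{1.1}), run $C^0$/$C^1$/$C^2$ estimates, apply Krylov--Nirenberg/Evans--Krylov, pass to a limit) is the same as the paper's, which proves Theorem \ref{t1.6} via the contracting flow (\ref{1.10}) of Theorem \ref{t3.2} with $\widetilde G=G^{-\beta}$. But two of your steps have genuine gaps, and they are exactly the two delicate ones. First, the convergence mechanism: you propose extracting a stationary limit from ``a standard monotone functional \ldots the primitive of the speed in $\rho$''. For a general anisotropic $G(X,\nu)$ there is no such variational/monotone functional --- the paper explicitly identifies this as the main obstruction for non-convex anisotropic flows --- and a primitive in $\rho$ only makes sense when $G$ has separated dependence on $\rho$ (and $u$), not for arbitrary $(X,\nu)$-dependence. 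The paper's substitute is a design choice you never mention: start from an initial hypersurface (e.g.\ a sphere of radius $r_1$ or $r_2$) on which the speed $Q-1$ has a fixed sign, prove by the maximum principle (Lemma \ref{l4.3}) that this sign is preserved, and conclude that $\rho$ is monotone in $t$ and bounded, so $\rho(\cdot,\infty)$ exists and a subsequence converges to a stationary solution. This sign condition is also used earlier, in several of the $C^1$ estimates and in the lower bound for $F$, so omitting it breaks more than the last step.

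Second, the $C^2$ estimate. Your test quantity $\kappa_{\max}e^{\Phi(u)+\Psi(|X|^2)}$ with $\Phi(u)=-\log u$ and a claimed ``exact cancellation'' is essentially the device the paper uses for Theorem \ref{t1.5} (Lemma \ref{l5.1}), which requires the \emph{log-convexity} of $uG$ and an expanding-type flow. The hypotheses of Theorem \ref{t1.6} are of the opposite, concavity type ($uG^{-\beta}$ log-concave for $\beta\ge k$, or $\frac{1}{1-\beta}$-concave for $0<\beta<1$), and the paper handles them with a different flow (the contracting one) and a different test function, $\theta=H/(\hat GJ-a)$ with $\hat G=u\widetilde G$, $J=F^{\beta}$, following Li--Xu--Zhang. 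The point is that mere concavity of $\sigma_k^{1/k}$ plus a weight in $u$ and $\rho$ is not known to close the estimate for non-convex $k$-convex hypersurfaces with $2\le k\le n-1$ --- this is precisely the regime of the Guan--Ren--Wang counterexamples. The paper needs two extra structural inputs you do not supply: the Guan--Li--Li inequality for $\sigma_k^{pq,rs}\nabla h\nabla h$ in terms of $\nabla\sigma_k$ and $\nabla H$ (Lemma \ref{l5.2}), and the first-order relation at the maximum of the quotient $\theta$ (equation (\ref{5.35})), whose coefficients are then shown to have the right signs exactly because $\beta\ge k$ (resp.\ $0<\beta<1$). Without these, your ``bad'' $d_\nu d_\nu G$ and gradient terms do not cancel, and the inequality $0\le -c\kappa_{\max}^2+C\kappa_{\max}+C$ you assert is not justified. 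So the proposal needs both the sign-preservation design of the flow and the quotient-type curvature estimate with Lemma \ref{l5.2} to become a proof.
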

\textbf{Remark:} (1) If $k=n$, an existence result can be seen in \cite{DL4}, where we only need a $C^0$ condition to derive the existence result.

(2) As in Theorem \ref{1.5}, the $\frac{1}{1-\beta}$-concavity of $uG^{-\beta}$ is just a typical condition. In fact, for $0<\beta<1$ or $\beta\ge k$, if $\exists\a>\frac{1}{1-\beta}$ such that $uG^{-\beta}$ is $\a$-concave, we can also derive this result.

In order to better understand the above conditions, we consider the homogeneous case which is first considered in \cite{GLL,GLM} if $F=\sigma_{k}^\frac{1}{k}$, i.e.
\begin{equation}\label{1.7}
\sigma_{k}=\psi\(\dfrac{X}{\rho}\)u^{1-p}\rho^{q-k-1},
\end{equation}
where $u=\la X,\nu\ra$, $\rho=|X|$ are the support and radial functions of hypersurface $M$ at $X$ respectively.
At this time $G=\psi(\dfrac{X}{\rho})^{\frac1k}u^\frac{{1-p}}{k}\rho^\frac{{q-k-1}}{k}$.
Then (\ref{1.2}) and (\ref{1.3}) imply $p>q$ and $1=p\ge q$ respectively at this case. Conditions (\rmnum{1}), (\rmnum{2}), (\rmnum{3}) and (\rmnum{4}) of Assumption \ref{a1.4} imply $1=p\ge q$, $p<1$, $p>q$ and $p>q$, respectively.
(1) of Theorem \ref{t1.5} is equivalent to $p\ge1-k$. (2) of Theorem \ref{t1.5} and (1) of Theorem \ref{t1.6} mean $p>1$ and $p\ge0$ respectively. In summary, we derive the existence solutions of (\ref{1.7}) for (1) $p>q,p\ge0$, (2) $p=1-k>q$ with $F=\sigma_{k}^\frac{1}{k}$; and for $p>q,p>1$ with general $F$. Thus Theorem \ref{t1.6} extends the famous result \cite{GLL} which is the case $p=0,q=k-n$, $F=\sigma_{k}^\frac{1}{k}$. In details, we have the following corollary by Theorems \ref{t1.6} and \ref{t6.1}.
\begin{corollary}
Let $n\ge2$, $2\le k\le n-1$, and $\psi(\xi)\in C^\infty(\mS^n)$ be a positive function. If $p>q$ and either $p\ge0$,  or $p=1-k$,
 equation (\ref{1.7}) has a unique, star-shaped, smooth, strictly $k$-convex solution $M$ in $\{r_1\le|X|\le r_2\}$.
\end{corollary}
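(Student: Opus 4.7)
The plan is to realize this corollary as a direct verification that Theorem~\ref{t1.6} (for existence) and Theorem~\ref{t6.1} (for uniqueness) both apply to the concrete datum
\[
G(X,\nu)=\psi\!\left(\tfrac{X}{\rho}\right)^{1/k} u^{(1-p)/k}\rho^{(q-k-1)/k},\qquad u=\langle X,\nu\rangle,\quad \rho=|X|,
\]
coming from (\ref{1.7}) with $F=\sigma_k^{1/k}$. The work then reduces to a short checklist for this particular $G$: Assumption~\ref{a1.1}, one clause of Assumption~\ref{a1.4}, the bound $d_\nu G(\nu)\leq G$, and one of the two convexity conditions in Theorem~\ref{t1.6}.

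For Assumption~\ref{a1.1}, I would observe that along the radial direction $\nu=X/|X|$ one has $u=\rho$, so $\rho\,G(X,X/|X|)=\psi^{1/k}\rho^{(q-p)/k}$; since $p>q$ the exponent is negative and the quantity is strictly decreasing in $\rho$, so the barriers $r_1<r_2$ can be selected using only $\min\psi$, $\max\psi$ and the value $F(1,\dots,1)=k^{1/k}$. For Assumption~\ref{a1.4} I would use clause (iii): writing $G=\phi(X,\nu)\varphi(u)$ with $\varphi(u)=u^{(1-p)/k}$, a one-line computation yields $\partial_\rho(\rho G)=\tfrac{q-p}{k}\cdot(\rho G)/\rho\leq -\varepsilon G$ with $\varepsilon=(p-q)/k$, valid whenever $p>q$. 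The bound $d_\nu G(\nu)\leq G$ reduces, since $G$ depends on $\nu$ only through $u$, to $\tfrac{1-p}{k}\leq 1$, i.e.\ $p\geq 1-k$, which holds in both regimes of the corollary.

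The only slightly delicate step is matching the two regimes to the two alternative convexity conditions of Theorem~\ref{t1.6}. When $p\geq 0$, I would use the log-convexity alternative: since $\log(G^k/u)=\log\psi-p\log u+(q-k-1)\log\rho$, its $\nu$-Hessian at fixed $X$ equals $p\,X^\alpha X^\beta/u^2$, which is rank one and positive semi-definite. When $p=1-k$ this Hessian has sign $p<0$, so I would instead invoke the concavity alternative: with $p=1-k$ the exponent of $u$ in $uG^{-\beta}$ is exactly $1-\beta$, hence for any $0<\beta<1$ the function $(uG^{-\beta})^{1/(1-\beta)}$ reduces to a linear function of $u$ for each fixed $X$ and is therefore (just barely) $\frac{1}{1-\beta}$-concave. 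This borderline case is the only point requiring care; once it is handled, Theorem~\ref{t1.6} produces the star-shaped, smooth, strictly $k$-convex solution in $\{r_1\leq|X|\leq r_2\}$, and Theorem~\ref{t6.1} supplies uniqueness.
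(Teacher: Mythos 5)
Your proposal is correct and takes essentially the same route as the paper, which obtains the corollary precisely by translating the hypotheses of Theorem \ref{t1.6} and Theorem \ref{t6.1} into conditions on $(p,q)$ for $G=\psi^{1/k}u^{(1-p)/k}\rho^{(q-k-1)/k}$: Assumption \ref{a1.1}, clause (iii) of Assumption \ref{a1.4} and the uniqueness hypothesis $\partial_\rho(\rho G)<0$ all from $p>q$, the bound $d_\nu G(\nu)\le G$ from $p\ge 1-k$, log-convexity of $G^k/\la X,\nu\ra$ from $p\ge 0$, and the borderline $\frac{1}{1-\beta}$-concavity of $uG^{-\beta}$ from $p=1-k$. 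The only (inconsequential) slip is the value $F(1,\dots,1)$, which for $F=\sigma_k^{1/k}$ equals $\binom{n}{k}^{1/k}$ rather than $k^{1/k}$; this does not affect the existence of the barriers $r_1<r_2$.
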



We call a hypersurface strictly $k$-convex if $\k\in\Gamma_k^+$ everywhere on $M$, where
$$\Gamma_k^+ =\{(\k_i)\in\mathbb{R}^n:\sigma_l>0 \mbox {, } \forall1\le l\le k\le n \}.$$

Let us make some remarks about our conditions. The convex cone $\Gamma$ that contains the positive cone in (\ref{1.5}) is decided by $F$, e.g., $\Gamma =\Gamma_1^+$ if $F=\sigma_1$; $\Gamma =\Gamma_k^+$ if $F=\sigma_k^{\frac 1k}$; $\Gamma =\Gamma_+$ the positive cone if $F=\sigma_n^{\frac 1n}$. (\ref{1.6}) ensures that this equation is elliptic.  

We give some examples of functions $F$ satisfying the required hypotheses. For any integer $k,l$ such that $0\leq k< l\leq n$, $(\frac{\sigma_l}{\sigma_k})^{\frac{1}{l-k}}$ is smooth, positive, symmetric function  and homogenous of degree one on the $l$-convex cone. It is easy to check that (\ref{1.2}) and (\ref{1.3}) hold for $(\frac{\sigma_l}{\sigma_k})^{\frac{1}{l-k}}$. $(\frac{\sigma_l}{\sigma_k})^{\frac{1}{l-k}}$ satisfies the concavity by \cite{HGC}.

The second example is $F=(\sum_{i=1}^n\k_i^{k})^{\frac{1}{k}}$ for $k<0$.
What's more,
if $F_1,\cdots,F_k$ satisfy our conditions, then $F=\prod_{i=1}^kF_i^{\alpha_i}$ also satisfies our conditions, where $\alpha_i\ge 0$ and $\sum_{i=1}^k\alpha_i=1$.
More examples can be seen in \cite{B3,B4}.

We shall prove the above theorems by a flow method. In Section 3, we will explain why the asymptotic behavior of these flows mean existence of solutions of the curvature equation (\ref{1.1}), and therefore complete the proof of main results. Thus in order to prove the above theorems, we shall establish the a priori estimates for some flows.

The rest of the paper is organized as follows. We first recall some notations and known results in Section 2 for later use. In Section 3, we  briefly describe the background of anisotropic flows firstly. Afterwards,
we state the existence and convergence result about some flows and derive the evolution equations. The $C^0$ and $C^1$ estimates of general flow (\ref{1.8}) are obtained in Section 4.
In Section 5, we derive the $C^2$-estimate and show
the convergence of these flows. 
At last, we shall prove the uniqueness results for some cases in Section 6.

\section{Preliminary}
We now state some general facts about hypersurfaces, especially those that can be written as graphs. The geometric quantities of ambient spaces will be denoted by $(\bar{g}_{\alpha\beta})$, $(\bar{R}_{\alpha\beta\gamma\delta})$ etc., where Greek indices range from $0$ to $n$. Quantities for $M$ will be denoted by $(g_{ij})$, $(R_{ijkl})$ etc., where Latin indices range from $1$ to $n$. We use the similar notations as those in \cite{DL2}.

Let $\nabla$, $\bar\nabla$ and $D$ be the Levi-Civita connection of $g$, $\bar g$ and the Riemannian metric $e$ of $\mathbb S^n$  respectively. All indices appearing after the semicolon indicate covariant derivatives. The $(1,3)$-type Riemannian curvature tensor is defined by
\begin{equation}\label{2.1}
	R(U,Y)Z=\nabla_U\nabla_YZ-\nabla_Y\nabla_UZ-\nabla_{[U,Y]}Z.
\end{equation}
We also denote the $(0,4)$ version of the curvature tensor by $R$,
\begin{equation}\label{2.4}
	R(W,U,Y,Z)=g(R(W,U)Y,Z).
\end{equation}
The induced geometry of $M$ is governed by the following relations. The second fundamental form $h=(h_{ij})$ is given by the Gaussian formula
\begin{equation}\label{2.5}
	\bar\nabla_ZY=\nabla_ZY-h(Z,Y)\nu,
\end{equation}
where $\nu$ is a local outer unit normal field. Note that here (and in the rest of the paper) we will abuse notation by disregarding the necessity to distinguish between a vector $Y\in T_pM$ and its push-forward $X_*Y\in T_p\mathbb{R}^{n+1}$. The Weingarten endomorphism $A=(h_j^i)$ is given by $h_j^i=g^{ki}h_{kj}$, and the Weingarten equation
\begin{equation}\label{2.7}
	\nu_{;i}^\alpha=h_i^kX_{;k}^\alpha,
\end{equation}
holds there, where we use the summation convention (and will henceforth do so).
We also have the Codazzi equation in $\mathbb{R}^{n+1}$
\begin{equation}\label{2.9}
	h_{ij;k}=h_{ik;j},
\end{equation}
and the Gauss equation
\begin{equation}\label{2.11}
	R_{ijkl}=h_{il}h_{jk}-h_{ik}h_{jl}.
\end{equation}

Let $dz^2$ be the standard metric on $\mathbb S^n$, then the metric of Euclidean space $\mathbb{R}^{n+1}$ in polar coordinate system is given by
\begin{equation*}
	\bar{g}:=ds^2=d\rho^2+\rho^2dz^2.
\end{equation*}
It is known that $u:=\la X,\nu\ra$ is the support function of a hypersurface in $\mathbb{R}^{n+1}$, where $\la\cdot,\cdot\ra=\bar{g}(\cdot,\cdot)$. Then we have the gradient and hessian of the support function $u$ under the induced metric $g$ on $M$.
\begin{lemma}\cite{GL,JL}\label{l2.2}
	The support function $u$ satisfies
	\begin{equation}\label{2.13}
		\begin{split}
			\nabla_iu=&g^{kl}h_{ik}\nabla_l\Phi,  \\
			\nabla_i\nabla_ju=&g^{kl}\nabla_kh_{ij}\nabla_l\Phi+\phi'h_{ij}-(h^2)_{ij}u,
		\end{split}
	\end{equation}
	where $(h^2)_{ij}=g^{kl}h_{ik}h_{jl}$ and
$\Phi(\rho)=\frac{1}{2}\rho^2.$
\end{lemma}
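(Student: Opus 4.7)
The plan is to derive both identities by a direct tensor computation on $M$, using only the product rule together with the fundamental equations of the embedding: the Gauss formula (\ref{2.5}), the Weingarten equation (\ref{2.7}), and the Codazzi equation (\ref{2.9}). No subtle device is required; the argument is essentially an algebraic rearrangement once one recognizes that the potential $\Phi=\tfrac12|X|^2$ satisfies $\bar\nabla\Phi=X$, so that its tangential gradient along $M$ is $\nabla_k\Phi=\la X,X_{;k}\ra$.

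For the first identity I would differentiate $u=\la X,\nu\ra$ along a coordinate direction on $M$:
\[
\nabla_i u=\la X_{;i},\nu\ra+\la X,\nu_{;i}\ra.
\]
The first term vanishes since $X_{;i}$ is tangent to $M$, and for the second the Weingarten equation (\ref{2.7}) gives $\nu_{;i}=h_i^{k}X_{;k}$. Combining these with $\la X,X_{;k}\ra=\tfrac12\p_k|X|^2=\nabla_k\Phi$ yields
\[
\nabla_i u=h_i^{k}\nabla_k\Phi=g^{kl}h_{ik}\nabla_l\Phi,
\]
which is the first identity.

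For the second identity I would differentiate the first, obtaining
\[
\nabla_j\nabla_i u=\nabla_j h_i^{k}\,\nabla_k\Phi+h_i^{k}\,\nabla_j\nabla_k\Phi,
\]
and then reduce each factor in turn. On the one hand, the Codazzi equation (\ref{2.9}) makes $\nabla_m h_{ij}$ totally symmetric in all three lower indices, so $\nabla_j h_i^{k}=g^{kl}\nabla_j h_{il}=g^{kl}\nabla_l h_{ij}$; after relabeling the contracted pair this produces exactly the first term $g^{kl}\nabla_k h_{ij}\nabla_l\Phi$ of the stated formula. On the other hand, to compute $\nabla_j\nabla_k\Phi$ I would start from $\nabla_k\Phi=\la X,X_{;k}\ra$, differentiate once more, and replace $\p_j X_{;k}$ by its decomposition $\nabla_j X_{;k}-h_{jk}\nu$ coming from the Gauss formula (\ref{2.5}); the tangential contribution cancels against the Christoffel correction in $\nabla_j\nabla_k$, and what survives is $\nabla_j\nabla_k\Phi=\phi'\,g_{jk}-u\,h_{jk}$, with $\phi'=1$ in the Euclidean setting used here (equivalently, $\bar\nabla^2\Phi=\bar g$). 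Substituting and contracting with $h_i^{k}h_{jk}=(h^2)_{ij}$ gives the second identity.

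There is no genuine obstacle in this lemma; the only points that demand care are index bookkeeping and the interpretation of $\phi'$. The notation $\phi'$ is the standard warped-product shorthand for the conformal factor in $\bar\nabla^2\Phi=\phi'\bar g$, which specializes to $1$ for $\Phi(\rho)=\tfrac12\rho^2$ in $\mR^{n+1}$; the real content of the proof lies in recognizing $\nabla_k\Phi$ at the Weingarten step and in invoking Codazzi to recast the derivative-of-curvature term in the symmetric form $g^{kl}\nabla_k h_{ij}\nabla_l\Phi$.
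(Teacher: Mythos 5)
Your computation is correct: the paper itself offers no proof of Lemma \ref{l2.2}, quoting it from \cite{GL,JL}, and your argument (differentiate $u=\la X,\nu\ra$, apply the Weingarten equation (\ref{2.7}), then Codazzi (\ref{2.9}) and the Gauss formula (\ref{2.5}) together with $\bar\nabla\Phi=X$, $\bar\nabla^2\Phi=\bar g$) is exactly the standard derivation given in those references, including the correct reading of $\phi'=1$ for $\Phi(\rho)=\tfrac12\rho^2$ in $\mR^{n+1}$. No gaps.
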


 For a hypersurface $(M,g)$ in $\mathbb{R}^{n+1}$, which is a graph of a smooth and positive function $\rho(z)$ on $\mathbb{S}^n$, let $\p_1,\cdots,\p_n$ be a local frame along $M$ and $\p_\rho$ be the vector field along radial direction. Then the support function, induced metric, inverse metric matrix, second fundamental form can be expressed as follows (\cite{GL}).
\begin{align*}
	u &= \frac{\rho^2}{\sqrt{\rho^2+|D\rho|^2}},\;\; \nu=\frac{1}{\sqrt{1+\rho^{-2}|D\rho|^2}}(\frac{\p}{\p\rho}-\rho^{-2}\rho_i\frac{\p}{\p x_i}),   \\
	g_{ij} &= \rho^2e_{ij}+\rho_i\rho_j,  \;\;   g^{ij}=\frac{1}{\rho^2}(e^{ij}-\frac{\rho^i\rho^j}{\rho^2+|D\rho|^2}),\\
	h_{ij} &=\(\sqrt{\rho^2+|D\rho|^2}\)^{-1}(-\rho D_iD_j\rho+2\rho_i\rho_j+\rho^2e_{ij}),\\
	h^i_j &=\frac{1}{\rho^2\sqrt{\rho^2+|D\rho|^2}}(e^{ik}-\frac{\rho^i\rho^k}{\rho^2+|D\rho|^2})(-\rho D_kD_j\rho+2\rho_k\rho_j+\rho^2e_{kj}),
\end{align*}
where $e_{ij}$ is the standard spherical metric. It will be convenient if we introduce a new variable $\gamma$ satisfying $$\frac{d\gamma}{d\rho}=\frac{1}{\rho}.$$
Let $\omega:=\sqrt{1+|D\gamma|^2}$, one can compute the unit outward normal $$\nu=\frac{1}{\omega}(1,-\frac{\gamma_1}{\rho},\cdots,-\frac{\gamma_n}{\rho})$$ and the support function $u=\frac{\rho}{\omega}$. Moreover,
\begin{align}\label{2.14}
	g_{ij} &=\rho^2(e_{ij}+\gamma_i\gamma_j), \;\; g^{ij}=\frac{1}{\rho^2}(e^{ij}-\frac{\gamma^i\gamma^j}{\omega^2}),\notag\\
	h_{ij} &=\frac{\rho}{\omega}(-\gamma_{ij}+\gamma_i\gamma_j+e_{ij}),\notag\\
	h^i_j &=\frac{1}{\rho\omega}(e^{ik}-\frac{\gamma^i\gamma^k}{\omega^2})(-\gamma_{kj}+\gamma_k\gamma_j+e_{kj})\notag\\
	&=\frac{1}{\rho\omega}(\delta^i_j-(e^{ik}-\frac{\gamma^i\gamma^k}{\omega^2})\gamma_{kj}).
\end{align}
Covariant differentiation with respect to the spherical metric is denoted by indices.

There is also a relation between the second fundamental form and the radial function on the hypersurface. Let $\widetilde{h}=\rho e$. Then
\begin{equation}\label{2.15}
	\omega^{-1}h=-\nn^2\rho+\widetilde{h}
\end{equation}
holds; cf. \cite{GC2}. Since the induced metric is given by
$$g_{ij}=\rho^2 e_{ij}+\rho_i\rho_j, $$
we obtain
\begin{equation}\label{2.16}
	\omega^{-1}h_{ij}=-\rho_{;ij}+\frac{1}{\rho}g_{ij}-\frac{1}{\rho}\rho_{i}\rho_{j}.
\end{equation}

A connection between $\vert\nn\rho\vert$ and $\vert D\gamma\vert$ can be seen in \cite{DL2}, i.e.
if $M$ is a star-shaped hypersurface, there holds
\begin{equation}\label{l2.3}
\vert\nn\rho\vert^2=1-\frac{1}{\omega^2}.
\end{equation}

Lastly,
we review some properties of elementary symmetric functions. See \cite{HGC} for more details.
In Section 1 we give the definition of elementary symmetric functions. The definition can be extended to symmetric matrices. Let $A\in Sym(n)$ be an $n\times n$ symmetric matrix. Denote by $\k=\k(A)$ the eigenvalues of $A$. Set $p_m(A)=p_m(\k(A))$. We have
\begin{equation*}
p_m(A)=\frac{(n-m)!}{n!}\delta^{j_1\cdots j_m}_{i_1\cdots i_m}A_{i_1j_1}\cdots A_{i_mj_m}, \qquad m=1,\cdots,n.
\end{equation*}
\begin{lemma}\label{l2.5}
If $\k\in\Gamma_m^+=\{x\in\mR^n: p_i(x)>0, i=1,\cdots,m\}$, we have the following Newton-MacLaurin inequalities for $1\le k\le m$.
\begin{eqnarray}
p_{m+1}(\k)p_{k-1}(\k)&\le& p_k(\k)p_m(\k),\\
p_1\ge p_2^{\frac{1}{2}}&\ge&\cdots\ge p_m^{\frac{1}{m}}.
\end{eqnarray}
Equality holds if and only if $\k_1=\cdots=\k_n$.
\end{lemma}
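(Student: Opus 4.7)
The plan is to reduce both inequalities to Newton's inequality $p_{j-1}(\k)\,p_{j+1}(\k)\le p_j(\k)^2$, which I would establish for $1\le j\le m-1$ under the assumption $\k\in\Gamma_m^+$ (with the convention $p_0=1$). Newton's inequality is really a statement about real-rooted polynomials: form $P(t)=\prod_{i=1}^n(t+\k_i)$, which has only real roots since each $\k_i\in\mR$. By Rolle's theorem, every derivative $P^{(\ell)}(t)$ has only real roots as well. Iteratively applying Rolle to reduce $P$ to a quadratic whose coefficients are, up to the binomial normalization in the definition of $p_m$, exactly $p_{j-1},p_j,p_{j+1}$, the discriminant inequality for a real-rooted quadratic yields $p_{j-1}p_{j+1}\le p_j^2$. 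The normalization $p_m=\binom{n}{m}^{-1}\sigma_m$ is precisely what makes the ratios of consecutive symmetric functions align cleanly under differentiation.

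Given Newton's inequality, on $\Gamma_m^+$ every $p_j$ with $1\le j\le m$ is strictly positive, so the ratios $a_j:=p_j/p_{j-1}$ are well defined. Newton's inequality is equivalent to $a_{j+1}\le a_j$, so $\{a_j\}$ is non-increasing in $j$. Hence for any $1\le k\le m$,
$$\frac{p_{m+1}}{p_m}=a_{m+1}\le a_k=\frac{p_k}{p_{k-1}},$$
which rearranges to the first claim $p_{m+1}(\k)\,p_{k-1}(\k)\le p_k(\k)\,p_m(\k)$.

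For the MacLaurin chain, write $p_k=a_1 a_2\cdots a_k$ and observe that, since $a_{k+1}$ is the smallest entry of the non-increasing sequence $a_1,\ldots,a_{k+1}$, one has $a_{k+1}\le(a_1\cdots a_k)^{1/k}$; multiplying by $a_1\cdots a_k$ and taking $(k+1)$-th roots gives $p_{k+1}^{1/(k+1)}\le p_k^{1/k}$. Chaining this inequality for $1\le k\le m-1$ produces the full MacLaurin chain $p_1\ge p_2^{1/2}\ge\cdots\ge p_m^{1/m}$. The equality case comes from Newton's inequality: equality forces the discriminant of the reduced quadratic to vanish, which then propagates back through the Rolle iteration to force $\k_1=\cdots=\k_n$.

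The main obstacle is the clean derivation of Newton's inequality via the Rolle reduction: specifically, tracking the binomial normalizations so that repeated differentiation of $P(t)$ produces the $p$'s in the correct proportions and so that the final discriminant of the reduced quadratic reads exactly $p_j^2-p_{j-1}p_{j+1}$. Once Newton's inequality is in hand, both displayed inequalities are purely combinatorial consequences of the monotonicity of $\{p_j/p_{j-1}\}$, which is why the paper simply cites \cite{HGC} instead of reproving the lemma.
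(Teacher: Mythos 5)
Your proof is correct, but note that the paper does not actually prove this lemma at all: it is stated as a known fact and attributed to the reference \cite{HGC}, so any self-contained argument is necessarily "different" from the paper's treatment. What you give is the classical route: Newton's inequality $p_{j-1}p_{j+1}\le p_j^2$ via real-rootedness of $P(t)=\prod_i(t+\kappa_i)$ and Rolle's theorem, followed by monotonicity of the ratios $a_j=p_j/p_{j-1}$. The reduction of both displayed inequalities to that monotonicity is sound, and it is worth emphasizing that the hypothesis $\kappa\in\Gamma_m^+$ is used exactly where you use it: all divisions are by $p_0,\dots,p_m>0$, while $p_{m+1}$ (which may vanish or be negative) is never a denominator, so the chain $a_{m+1}\le a_m\le\cdots\le a_k$ and the rearrangement to $p_{m+1}p_{k-1}\le p_kp_m$ are legitimate, as is $p_{k+1}^{1/(k+1)}\le p_k^{1/k}$ for $k+1\le m$ since there $a_{k+1}>0$. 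Two small points you gloss over: in the Rolle reduction one usually differentiates, reverses the coefficient order, and differentiates again to isolate $p_{j-1},p_j,p_{j+1}$ in a quadratic, and the reversal step needs a word (or a continuity/perturbation argument) when some $\kappa_i=0$; and the equality statement "propagates back through the Rolle iteration" deserves a sentence, namely that a double root of the reduced quadratic forces, via the interlacing of roots under differentiation, a root of full multiplicity of $P$, i.e. $\kappa_1=\cdots=\kappa_n$ (degenerate equality cases with many zero entries are excluded here precisely because $p_1,\dots,p_m>0$). With those details filled in, your argument is a complete and standard proof of the cited lemma; the paper's choice to cite \cite{HGC} simply outsources exactly this computation.
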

We assume that $\k_1\ge\cdots\ge\k_n$. Let us denote by $\sigma_{k,i}(\k)$ the sum of the terms of $\sigma_k(\k)$ not containing the factor $\k_i$. Then the following identities hold.
\begin{proposition}\label{p2.6}
\cite{HGC} We have, for any $k=0,\cdots,n$, $i=1,\cdots,n$ and $\k\in\mR^n$,
\begin{align}
\frac{\p\sigma_{k+1}}{\p\k_i}(\k)&=\sigma_{k,i}(\k),\\
\sigma_{k+1}(\k)&=\sigma_{k+1,i}(\k)+\k_i\sigma_{k,i}(\k),\\
\sum_{i=1}^n\sigma_{k,i}(\k)&=(n-k)\sigma_k(\k),\\
\sum_{i=1}^n\k_i\sigma_{k,i}(\k)&=(k+1)\sigma_{k+1}(\k),\\
\sum_{i=1}^n\k_i^2\sigma_{k,i}(\k)&=\sigma_1(\k)\sigma_{k+1}(\k)-(k+2)\sigma_{k+2}(\k),\\
\k_1\sigma_{m-1,1}(\k)&\ge \frac{m}{n}\sigma_{m}(\k).
\end{align}
\end{proposition}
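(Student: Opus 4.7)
The plan is to treat the five equalities in the proposition as routine consequences of the combinatorial expansion $\sigma_k(\k)=\sum_{|I|=k}\prod_{j\in I}\k_j$, and to isolate the final inequality $\k_1\sigma_{m-1,1}\ge\tfrac{m}{n}\sigma_m$ as the only step that uses the ordering of the $\k_i$ together with a G\aa rding-cone positivity input.

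For the equalities I would always expand $\sigma_\bullet$ and $\sigma_{\bullet,i}$ as sums over index subsets and interchange orders of summation. Briefly: $\partial\sigma_{k+1}/\partial\k_i=\sigma_{k,i}$ follows because $\partial_{\k_i}$ kills monomials not containing $\k_i$ and merely removes the factor $\k_i$ from the others; the recursion $\sigma_{k+1}=\sigma_{k+1,i}+\k_i\sigma_{k,i}$ is the partition of monomials of $\sigma_{k+1}$ by whether or not they involve index $i$; $\sum_i\sigma_{k,i}=(n-k)\sigma_k$ is the double count that every $k$-subset $J$ lies in $\{1,\dots,n\}\setminus\{i\}$ for exactly $n-k$ values of $i$; and $\sum_i\k_i\sigma_{k,i}=(k+1)\sigma_{k+1}$ is the parallel count that every $(k+1)$-subset $I$ contains $k+1$ indices $i$. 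The quadratic identity $\sum_i\k_i^2\sigma_{k,i}=\sigma_1\sigma_{k+1}-(k+2)\sigma_{k+2}$ falls out by multiplying the recursion by $\k_i$, summing over $i$, and applying $\sum_i\k_i\sigma_{k+1,i}=(k+2)\sigma_{k+2}$ on the right.

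The only nontrivial piece is the last inequality. Ordering $\k_1\ge\cdots\ge\k_n$ and writing $\sigma_{m-1,ij}$ for the $(m-1)$-th elementary symmetric polynomial of the $(n-2)$-tuple obtained by removing $\k_i$ and $\k_j$, a direct monomial count produces the symmetry
$$\sigma_{m,i}-\sigma_{m,j}=(\k_j-\k_i)\sigma_{m-1,ij},\qquad i<j.$$
Granting $\sigma_{m-1,ij}\ge0$ for $\k\in\Gamma_m^+$, this display forces $\sigma_{m,1}$ to be the smallest of the numbers $\sigma_{m,i}$, whence $n\sigma_{m,1}\le\sum_i\sigma_{m,i}=(n-m)\sigma_m$ by the third identity, and the recursion at $k=m-1$, $i=1$ converts this into
$$\k_1\sigma_{m-1,1}=\sigma_m-\sigma_{m,1}\ge\sigma_m-\tfrac{n-m}{n}\sigma_m=\tfrac{m}{n}\sigma_m.$$
The only genuine obstacle is the positivity claim $\sigma_{m-1,ij}\ge0$ on $\Gamma_m^+$, which is a standard cascade fact for G\aa rding cones; I would either cite \cite{HGC} for this or argue inductively, using that deletion of the largest coordinate moves a vector of $\Gamma_m^+$ into $\Gamma_{m-1}^+$ in one lower dimension.
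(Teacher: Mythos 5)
Your treatment of the five identities is correct and is the standard monomial--counting argument; the paper itself gives no proof of Proposition \ref{p2.6} (it simply cites \cite{HGC}), so for that part there is nothing to compare.

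The final inequality, however, contains a genuine gap. The positivity claim $\sigma_{m-1,ij}\ge 0$ on $\Gamma_m^+$ that your argument hinges on is false: for $n=3$, $m=2$ the point $\k=(3,2,-\tfrac12)$ lies in $\Gamma_2^+$ (indeed $\sigma_1=\tfrac92>0$, $\sigma_2=\tfrac72>0$), yet $\sigma_{1,12}=\k_3=-\tfrac12<0$. The conclusion you draw from it, namely that $\sigma_{m,1}$ is the smallest of the numbers $\sigma_{m,i}$, fails at the same point, since $\sigma_{2,1}=\k_2\k_3=-1>\k_1\k_3=-\tfrac32=\sigma_{2,2}$. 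The inductive justification you sketch does not rescue this: deleting one coordinate of a vector in $\Gamma_m^+$ only lands you in $\Gamma_{m-1}^+$ of $\mR^{n-1}$, so deleting two coordinates yields $\sigma_{m-2,ij}>0$, one degree short of the $\sigma_{m-1,ij}\ge0$ you need. The standard repair is a case distinction on the sign of $\sigma_{m,1}$. If $\sigma_{m,1}\le0$, then $\k_1\sigma_{m-1,1}=\sigma_m-\sigma_{m,1}\ge\sigma_m\ge\tfrac mn\sigma_m$, since $\sigma_m>0$ on $\Gamma_m^+$. If $\sigma_{m,1}>0$, then the truncated tuple $(\k_2,\dots,\k_n)$, which lies in $\Gamma_{m-1}^+(\mR^{n-1})$ because $\k\in\Gamma_m^+$, satisfies in addition $\sigma_m(\k_2,\dots,\k_n)=\sigma_{m,1}>0$, hence lies in $\Gamma_m^+(\mR^{n-1})$; consequently $\sigma_{m-1,1j}=\partial_{\k_j}\sigma_m(\k_2,\dots,\k_n)>0$ for every $j\ge2$, and only now does the term-by-term comparison become legitimate: using your identities (3) and (4) on the truncated tuple, $m\sigma_{m,1}=\sum_{j\ge2}\k_j\sigma_{m-1,1j}\le\k_1\sum_{j\ge2}\sigma_{m-1,1j}=(n-m)\k_1\sigma_{m-1,1}$, which rearranges, via $\sigma_m=\sigma_{m,1}+\k_1\sigma_{m-1,1}$, to $\k_1\sigma_{m-1,1}\ge\tfrac mn\sigma_m$. (Note also that the inequality genuinely requires $\k\in\Gamma_m^+$ and the ordering $\k_1\ge\cdots\ge\k_n$, as you correctly assumed, even though the proposition is loosely stated for $\k\in\mR^n$.)
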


\section{The results about flows and evolution equations}
Anisotropic flows of strictly convex hypersurfaces with speed depending on their curvatures, support function and radial function have been considered recently, cf. \cite{IM,DL3,DL4,BIS,CL,BIS3,CHZ,LSW} etc.. These flows usually provide alternative proofs and smooth category approach of the existence of solutions to elliptic PDEs arising in convex body geometry. However, due to the lack of a suitable monotone  functional along this flows, the literature on anisotropic flows of non-convex hypersurfaces is not rich and we didn't find any results for this. To solve this problem, inspired by our previous work \cite{DL3}, we consider a large class of anisotropic flows without global forcing terms.

Let $M_0$ be a closed, smooth and star-shaped hypersurface in $\mathbb{R}^{n+1}$ ($n\geq2$), and $M_0$ encloses the origin. Firstly we describe the homogeneous case as an example.
We shall introduce the following inverse curvature flow and its scaling process,
\begin{equation}\label{x0}
	\begin{cases}
		&\frac{\partial X}{\partial t}(x,t)=\psi(\frac{X}{|X|}) u^{\alpha}\rho^\delta F^{-\beta}(\k_i)\nu(x,t),\\
		&X(\cdot,0)=X_0.
	\end{cases}
\end{equation}
If $\a+\beta+\delta<1$, let $\widetilde X(\cdot,\tau)=\varphi^{-1}(t)X(\cdot,t)$, where
\begin{equation*}
	\tau=\frac{\log((1-\alpha-\delta-\beta)\eta t+C_0)-\log C_0}{(1-\alpha-\delta-\beta)\eta},
\end{equation*}
\begin{equation}\label{x00}
	\varphi(t)=(C_0+(1-\beta-\delta-\alpha)\eta t)^{\frac{1}{1-\beta-\delta-\alpha}}.
\end{equation}
We can set $C_0$ sufficiently large (or small) to make
\begin{equation*}
	\psi\widetilde u^{\alpha-1}\widetilde\rho^\delta\widetilde F^{-\beta}\vert_{M_0}=C_0\psi u^{\alpha-1}\rho^\delta F^{-\beta}\vert_{M_0}>(\text{or }<)\eta.
\end{equation*}
Obviously, $\frac{\p\tau}{\p t}=\varphi^{\a+\delta+\beta-1}$ and $\tau(0)=0$. Then $\widetilde X(\cdot,\tau)$ satisfies the following normalized flow,
\begin{equation}\label{x1}
	\begin{cases}
		\frac{\partial \widetilde X}{\p \tau}(x,\tau)=\psi\widetilde u^{\alpha}\widetilde\rho^\delta\widetilde F^{-\beta}(\widetilde\k)\nu-\eta\widetilde X,\\[3pt]
		\widetilde X(\cdot,0)=\widetilde X_0.
	\end{cases}
\end{equation}
For convenience we still use $t$ instead of $\tau$ to denote the new time variable, and omit the ``tilde'' if no confusions arise. 
Similarly, we can do the same process to a curvature flow. Thus, in this paper, if we replace $\psi(\frac{X}{|X|})u^{\a-1}\rho^\delta$ by $G(X,\nu)$,
for the convenience of calculation, we study the following modified flow
\begin{equation}\label{1.8}
	\begin{cases}
		&\frac{\partial X}{\partial t}(x,t)=\(\Psi\(G(X,\nu)F^{-\beta}(\k_i)\)-\Psi(1)\)X(x,t),\\
		&X(\cdot,0)=X_0,
	\end{cases}
\end{equation}
where $F(x,t)$ is a suitable curvature function of the hypersurface $M_t$ parameterized by $X(\cdot,t): M^n\times[0,T^*)\to \mR^{n+1}$, $\beta>0$, $\k_i$ and $\nu(\cdot,t)$ are the principal curvature and the outer unit normal vector field to $M_t$ respectively. Here $\Psi\in C^\infty(\mR_+)$  is a smooth function satisfying $\Psi'(s)=\frac{d}{ds}\Psi(s)>0$ for all $s>0$. We shall derive the $C^0$ and $C^1$ estimates along flow (\ref{1.8}) and derive the $C^2$-estimate along the following special cases.
\begin{align}
	&\frac{\partial X}{\partial t}(x,t)=\(G(X,\nu)F^{-\beta}(\k_i)-1\)X(x,t),\label{1.9}\\
	&\frac{\partial X}{\partial t}(x,t)=\(1-\widetilde G(X,\nu)F^{\beta}(\k_i)\)X(x,t),\label{1.10}
\end{align}
where $\widetilde G=\frac{1}{G}$.
In fact, the flows (\ref{1.9}) and (\ref{1.10}) are corresponding to the flow (\ref{1.8}) with $\Psi(s)=s$ and $\Psi(s)=-s^{-1}$ respectively.

Flow (\ref{1.8}) is inspired by our previous work \cite{DL3}. However, flow (\ref{1.8}) is more complicated than the flow in \cite{DL3}, since it involves a nonlinear function $G$ and the initial hypersurface is non-convex. Note that $G$ is a function of $X$ other than $\vert X\vert$, which needs more effort to deal with.

In order to prove our main theorems, we will prove the following flow results: Theorems \ref{t3.1} and \ref{t3.2}. And then Theorems \ref{t1.5} and \ref{t1.6} are direct corollaries of Theorems \ref{t3.1} and \ref{t3.2} respectively.
\begin{theorem}\label{t3.1}
Let $F\in C^2(\Gamma)\cap C^0(\p\Gamma)$ satisfy Assumption \ref{a1.3}, and let $X_0(M)$ be the embedding of a closed $n$-dimensional manifold $M^n$ in $\mathbb{R}^{n+1}$ such that $X_0(M)$ is a graph over $\mathbb{S}^n$, and such that $\k\in\Gamma$ for all n-tuples of principal curvatures along $X_0(M)$. Suppose $G^\frac{1}{\beta}$ satisfies
Assumption \ref{a1.1}, one of (\rmnum{2})-(\rmnum{5}) of Assumption \ref{a1.4}, and $uG$ is log-convex with respect to $\nu$ with $\frac{d_\nu G(\nu)}{G}\le \beta$. If $(GF^{-\beta}-1)|_{M_0}$ has a sign and $r_1\le\rho(M_0)\le r_2$,
then the flow (\ref{1.9}) with $\beta>0$ has a unique smooth solution $M_t$ for all time $t>0$. For each $t\in[0,\infty)$, $X(\cdot,t)$ is a parameterization of a smooth, closed, star-shaped, admissible  hypersurface $M_t$ in $\mR^{n+1}$ by $X(\cdot,t)$: $M^n\to \mR^{n+1}$. Moreover,
 a subsequence of $M_t$ converges in $C^\infty$-topology to a positive, smooth admissible solution to $G(X,\nu)F^{-\beta}=1$ along flow (\ref{1.9}).
\end{theorem}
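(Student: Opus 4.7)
The plan is to follow the standard parabolic scheme: short-time existence, uniform $C^0$, $C^1$, and $C^2$ a priori estimates so that Krylov--Safonov--Evans theory gives $C^\infty$-regularity and long-time existence, and smooth subsequential convergence driven by the pointwise monotonicity of $\rho$ inherited from the preserved sign of $GF^{-\beta}-1$.

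Writing $M_t$ as a radial graph $X=\rho(z,t)z$ over $\mathbb{S}^n$ reduces (\ref{1.9}) to a scalar parabolic equation for $\rho$, giving local-in-time existence. For the $C^0$-estimate I use $\partial_t\rho=(GF^{-\beta}-1)\rho$: at a spatial maximum of $\rho$ the hypersurface is tangent to $\partial B_{\rho_{\max}}$ from inside, so $\nu=X/|X|$ and $\kappa_i\geq 1/\rho_{\max}$, whence $F(\kappa)\geq F(1,\dots,1)/\rho_{\max}$ by homogeneity; combined with Assumption \ref{a1.1} applied to $G^{1/\beta}$ this gives $GF^{-\beta}\leq 1$ when $\rho_{\max}=r_2$, and symmetrically $GF^{-\beta}\geq 1$ when $\rho_{\min}=r_1$. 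The parabolic maximum principle then preserves $r_1\leq\rho\leq r_2$. The quantity $GF^{-\beta}-1$ itself satisfies a linear parabolic equation along the flow, so its sign is preserved and $\rho(\cdot,t)$ is pointwise monotone in $t$.

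Next I would obtain the $C^1$-estimate by applying the maximum principle to $\omega^2=1+|D\gamma|^2$ (equivalently to $\rho/u$). The zeroth-order term in the evolution of $\omega$ is controlled by $\partial_\rho(\rho G)|_\nu$, and each of the cases (ii)--(v) of Assumption \ref{a1.4} supplies exactly the favourable sign needed: cases (iii), (iv), (v) directly, and case (ii) through the explicit factor $\varphi(u)$ which forces the quantity $u(\log\varphi)'$ to produce a good term. This yields $\omega\leq C$ and hence a positive lower bound on $u$. The main obstacle is the $C^2$-estimate: since $F$ is only assumed to satisfy Assumption \ref{a1.3}, the Guan--Ren--Wang counterexample shows that some structural hypothesis on $G$ is indispensable, and here the two hypotheses $d_\nu G\leq \beta G$ and log-convexity of $uG$ in $\nu$ are chosen precisely to play that role. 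My plan is to test a function of the form $\log\lambda_{\max}+a\log u+b\rho^2$ with $a,b$ tuned to the previous estimates, then use Gauss/Codazzi, the concavity of $F$, and the Weingarten identity $\nu^\alpha_{;i}=h_i^k X^\alpha_{;k}$; the second $\nu$-derivative contribution of $G$ is absorbed by the log-convexity of $uG$, while the quadratic term $F^{-\beta-1}F^{ij}(h^2)_{ij}\lambda_1$ arising from differentiating $F^{-\beta}$ twice is matched by $d_\nu G\leq \beta G$, giving a uniform upper bound on $\lambda_{\max}$.

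Once the curvatures are trapped in a fixed compact subset of $\Gamma$, equation (\ref{1.9}) is a uniformly parabolic concave fully nonlinear PDE for $\rho$, so Krylov--Safonov--Evans plus Schauder bootstrapping give $C^\infty$-bounds in space-time and long-time existence by standard continuation. For convergence, the pointwise $t$-monotonicity of $\rho$ together with the $C^0$-bound forces $\rho(\cdot,t)\to\rho_\infty$ pointwise, and the uniform $C^\infty$-bounds let me extract a $C^\infty$-convergent subsequence. A standard integral identity along the flow, e.g.\ monotonicity of $\int_{\mathbb{S}^n}\rho^{n+1}\,dz$ with the appropriate sign of $GF^{-\beta}-1$, shows $GF^{-\beta}\to 1$ in a suitable norm along the flow, identifying the limit as a smooth admissible solution of $GF^{-\beta}=1$.
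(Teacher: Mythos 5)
Your overall scheme (radial-graph reduction, barrier argument at the extrema of $\rho$ for the $C^0$-bound, preservation of the sign of $Q-1:=GF^{-\beta}-1$, a second-order test function combining $\log$ of the largest curvature with functions of $u$ and $\rho$, Krylov--Safonov/Evans regularity, and convergence from the monotonicity of $\rho$) is the same as the paper's. However, your $C^1$-step has a genuine gap. You propose to run the maximum principle on $\omega^2=1+|D\gamma|^2$ in all cases (ii)--(v) of Assumption \ref{a1.4}, claiming the zeroth-order term is controlled by $\partial_\rho(\rho G)$ and that case (ii) works ``through the factor $\varphi(u)$''. The actual evolution of $\tfrac12|D\gamma|^2$ (the paper's (\ref{4.6})) produces the coefficient $\bigl(\tfrac{d_X\phi(X)}{\phi}+\tfrac{u\varphi_u}{\varphi}+\beta\bigr)|D\gamma|^2$: condition (iii) makes this $\le-\eps'|D\gamma|^2$, and that is the only case in which the paper uses this test function (Lemma \ref{l4.5}). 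Condition (ii) controls only $\tfrac{u\varphi_u}{\varphi}$ and says nothing about $\tfrac{d_X\phi(X)}{\phi}+\beta$, which may be large and positive, so the argument does not close; condition (v) gives only $\le 0$, which is not enough to absorb the first-order terms because for a general $F$ of Assumption \ref{a1.3} the quadratic term $\tfrac{\beta}{F}F^{ij}(\gamma_i\gamma_j-|D\gamma|^2\delta_{ij})$ is not uniformly negative. The paper instead treats (ii), (iv), (v) with the support-function test function $\theta=-\log u+\zeta(|X|^2)$ (Lemmas \ref{l3.4}, \ref{l4.4}), crucially exploiting the preserved sign of $Q-1$ (so that $\Psi(1)-\Psi(Q)$ has a favourable sign after choosing $\zeta=-\alpha/t$ or $\zeta=1$) and a large parameter $\alpha$ so that the good term coming from (ii) (respectively the strict inequality in (iv), or the $F^{ii}h_{ii}^2$ term in (v)) dominates; your plan contains no substitute for this mechanism.

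A second, related omission: before the $C^2$-estimate you need two-sided bounds on $F$ (equivalently on $Q$), both to conclude that the principal curvatures stay in a compact subset of $\Gamma$ (an upper bound on $\kappa_{\max}$ alone does not, since $F\vert_{\p\Gamma}=0$) and to get uniform parabolicity. In the paper this is Lemma \ref{l4.3} (using $\partial_\rho(\rho G^{1/\beta})\le0$ when available), Lemma \ref{l4.7} (upper bound of $F$ via $P=uGF^{-\beta}$ and the function $\log P-\tfrac A2\rho^2$ when only $(Q-1)\vert_{M_0}\le0$ is known), and Corollary \ref{c4.10}; your proposal jumps from the curvature upper bound to ``trapped in a compact subset of $\Gamma$'' without this input. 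Finally, on the $C^2$-estimate itself, note that the paper's choice $p(u)=-\log(u-\tfrac12\min u)$ is not incidental: it gives $1+p'u<0$ and $p'^2=p''$ exactly, which are what kill the $F^{ij}(h^2)_{ij}\kappa_1$ term in the case $\kappa_n\ge-\eps_1\kappa_1$ after the split into the two cases $\kappa_n<-\eps_1\kappa_1$ and $\kappa_n\ge-\eps_1\kappa_1$; with $a\log u+b\rho^2$ the leftover $(a^2+a)u^{-2}F^{ij}u_iu_j$ term is of the same order as the good term and must be handled, so this part of your sketch also needs the paper's more careful construction. The convergence argument you outline (integral decay of $Q-1$ plus monotone bounded $\rho$) is essentially the paper's and is fine.
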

\textbf{Remark:} (1) There have many initial hypersurfaces satisfied $(GF^{-\beta}-1)|_{M_0}\ge0$ or $(GF^{-\beta}-1)|_{M_0}\le0$, such as the sphere with radius $r_1$ or $r_2$ by Assumption \ref{a1.1}.
By Theorem \ref{t3.1}, the equation $F=\breve{G}(X,\nu)$ must have a solution if $\breve{G}=G^\frac{1}{\beta}$. Thus, if $uG$ is log-convex with respect to $\nu$, we have $u\breve{G}^\beta$ is log-convex with respect to $\nu$, i.e. $\log u+\beta\log\breve{G}$ is convex. Since the bounds of $u$ and $\rho$ are based on $r_1$ and $r_2$, we can conclude that $\breve{G}$ is strictly log-convex with respect to $\nu$ by the arbitrary of $\beta$. This is why we need the condition of $G$ in Theorem \ref{t1.5}.

(2) If $G(X,\nu)=\psi(\frac{X}{\rho})u^{\a-1}\rho^\delta$, the theorem follows if $\a\le0$, $\a+\delta+\beta<1$. Thus Theorem \ref{t3.1} generalizes partial results in \cite{DL3}.

(3) We only posed a typical condition in this theorem.
In fact, the condition $uG$ is log-convex can be replaced by one of  the following conditions: (a) $\exists \a<\frac{1}{\beta+1}$, $uG$ is $\a$-convex; (b) $uG$ is strictly $\frac{1}{\beta+1}$-convex.

 If $F=\sigma_{k}^\frac{1}{k}$ and $\Psi(s)=-s^{-1}$, there are some results for special $G=G(\rho,\nu)$, such as \cite{LSW,LSW2,LXZ}. In \cite{LSW2},  Li-Sheng-Wang studied the contracting flows of strictly convex hypersurfaces with $G=\rho^\delta$ and $\beta=1$. In \cite{LXZ}, Li-Xu-Zhang generalized the result in \cite{LSW2} from strictly convex to $k$-convex. We have the following existence result of (\ref{1.1}) by the anisotropic contracting flow (\ref{1.10}), which could be also as an extension of the result in \cite{LXZ}.

\begin{theorem}\label{t3.2}
	Let $F=\sigma_{k}^\frac{1}{k}$, $2\le k\le n-1$, and let $X_0:M^n\rightarrow\mR^{n+1}$ be a smooth, closed, strictly $k$-convex and star-shaped hypersurface in $\mathbb{R}^{n+1}$.
	 Suppose $\wg^{-\frac{1}{\beta}}$ satisfies
	Assumption \ref{a1.1}, one of Assumption \ref{a1.4} and $\frac{d_\nu \wg(\nu)}{\wg}+\beta\ge0$. Further assume either (1) $u\widetilde{G}$ is log-concave with respect to $\nu$ for $\beta\ge k$, or (2) $d_\nu d_\nu(u\wg)-\frac{\beta}{\beta-1}\frac{(d_\nu(u\wg))^2}{u\wg}$ is semi-negative definite for $0<\beta<1$. If $(\wg \sigma_{k}^\frac{\beta}{k}-1)|_{M_0}$ has a sign and $r_1\le\rho(M_0)\le r_2$,
	then the flow (\ref{1.10}) has a unique smooth, strictly $k$-convex solution $M_t$ for all time $t>0$. Moreover,
	a subsequence of $M_t$ converges in $C^\infty$-topology to a positive, smooth, strictly $k$-convex solution to $\wg(X,\nu)\sigma_{k}^\frac{\beta}{k}=1$ along flow (\ref{1.10}).
\end{theorem}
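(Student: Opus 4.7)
The plan is to run the standard parabolic program for curvature flows: prove a priori estimates up to $C^2$, use Krylov--Safonov plus Evans--Krylov for higher regularity, conclude long-time existence by continuation, and then extract a subsequential smooth limit. Short-time existence on some interval $[0,T^*)$ is immediate because $F=\sigma_k^{1/k}$ is concave and monotone on $\Gamma_k^+$ and $M_0$ is strictly $k$-convex, so the linearization of the speed $(1-\wg\sigma_k^{\beta/k})X$ in the normal direction is uniformly parabolic. The job is to show $T^*=\infty$ and produce a convergent subsequence.

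First I would write $M_t$ as a graph of the radial function $\rho$ over $\mS^n$. At a spatial extremum of $\rho$, we have $X$ parallel to $\nu$, $u=\rho$, and $\kappa_i=1/\rho$, so $\sigma_k^{\beta/k}=\binom{n}{k}^{\beta/k}\rho^{-\beta}$. Using Assumption \ref{a1.1} applied to $\wg^{-1/\beta}$, the sign of $1-\wg\sigma_k^{\beta/k}$ points the right way at $\rho=r_1$ and $\rho=r_2$, so the maximum principle gives $r_1\le\rho(\cdot,t)\le r_2$ for all $t$. The $C^1$ bound amounts to an upper bound on $\omega=\rho/u$; I would compute $\partial_t\omega$ along the flow and, at an interior maximum, invoke one of conditions (\rmnum{1})--(\rmnum{6}) of Assumption \ref{a1.4} applied to $\wg^{-1/\beta}$ to absorb the bad terms, yielding $\omega\le C$ and consequently $u\ge c\,\rho\ge cr_1>0$. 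These two steps are essentially those of \cite{DL3} adapted to a non-convex setting; the condition $d_\nu\wg/\wg+\beta\ge 0$ is what guarantees that the monotonicity assumption on $\wg^{-1/\beta}$ (stated in Assumption \ref{a1.4}) translates correctly to the flow speed $1-\wg\sigma_k^{\beta/k}$.

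The heart of the argument is the $C^2$ estimate: preserve strict $k$-convexity from below and bound the principal curvatures from above. For the lower bound, I would apply the maximum principle to $\log\sigma_k^{1/k}$ (or $\sigma_k/\sigma_{k-1}$), using Newton--MacLaurin inequalities from Lemma \ref{l2.5} and Proposition \ref{p2.6} together with the inequality $d_\nu\wg/\wg+\beta\ge 0$ to show $\sigma_k^{1/k}$ is bounded below so that $\kappa\in\Gamma_k^+$ stays a definite distance from $\partial\Gamma_k^+$. For the upper curvature bound I would form a Guan--Ren--Wang / Li--Xu--Zhang type test function, e.g.\ $W=\log h^1_1+a\log u+b|X|^2/2$ at the largest principal curvature, and compute $\partial_t W-\wg\sigma_k^{\beta/k-1}\sigma_k^{ij}\nabla_i\nabla_j W$. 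The case split in the hypothesis is dictated precisely by which algebraic structure makes the bad $\nu$-derivative terms absorbable: in case (1), $\beta\ge k$ with $u\wg$ log-concave in $\nu$ yields directly the desired sign after using the concavity of $\sigma_k^{1/k}$; in case (2), $0<\beta<1$ with $d_\nu d_\nu(u\wg)-\tfrac{\beta}{\beta-1}\tfrac{(d_\nu(u\wg))^2}{u\wg}\le 0$ is exactly the condition ensuring that the quadratic-in-$\nabla h$ terms arising from differentiating $\wg\sigma_k^{\beta/k}$ twice combine, after multiplying by $\sigma_k^{\beta/k-1}$, with the concavity term from $F^\beta$ to give a negative contribution at a maximum.

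The main obstacle is this $C^2$ upper bound: the regime $2\le k\le n-1$ is exactly where Guan--Ren--Wang's counterexamples show the estimate can fail for general right-hand sides, so the extra convexity-type hypotheses on $u\wg$ cannot be avoided and must be used in a delicate algebraic combination with Newton--MacLaurin to close the estimate. Once the $C^2$ estimates are established and uniform parabolicity is in force, the Krylov--Safonov and Evans--Krylov theorems, followed by Schauder iteration, give uniform $C^{m,\alpha}$ bounds for every $m$, so $T^*=\infty$. For convergence, I would exhibit a monotone functional along \eqref{1.10}---a natural candidate is the weighted enclosed volume $\int_{\Omega_t}\wg^{1/\beta}(X,X/|X|)^{-1}\,dx$ or $\int_{M_t}(\wg^{1/\beta}-\sigma_k^{-1/k})^2\,d\mu_t$---whose monotonicity, together with the uniform $C^\infty$ bounds, forces $\wg\sigma_k^{\beta/k}\to 1$ in $L^2$ along the flow; a standard Arzel\`a--Ascoli argument then extracts a subsequence $M_{t_j}\to M_\infty$ in $C^\infty$ with $M_\infty$ strictly $k$-convex and solving $\wg\sigma_k^{\beta/k}=1$.
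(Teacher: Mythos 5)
Your outline follows the right general program, but two steps do not survive comparison with what is actually needed. The most serious gap is the convergence argument. You propose an unproven monotone functional (a weighted enclosed volume or an $L^2$ deficit), but no such functional is known to be monotone along (\ref{1.10}) for $\wg$ depending on both $X$ and $\nu$ on non-convex hypersurfaces --- this is precisely the obstruction the paper is designed around. Note that you never use the hypothesis that $(\wg\sigma_k^{\beta/k}-1)|_{M_0}$ has a sign; in the paper this is the engine of convergence: by the parabolic maximum principle (Lemma \ref{l4.3}) the sign of $Q-1$ is preserved, so the speed $(1-\wg\sigma_k^{\beta/k})\rho$ has a fixed sign, $\rho(\cdot,t)$ is monotone in $t$ and bounded between $r_1$ and $r_2$, hence $\rho(\cdot,\infty)$ exists; combined with the uniform estimates a subsequence converges smoothly and the limit solves $\wg\sigma_k^{\beta/k}=1$. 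The same sign hypothesis is also used in the $C^1$ estimate (Lemma \ref{l4.6}), where the choice of the sign of $Q-1$ at $t=0$ is tailored to which condition of Assumption \ref{a1.4} holds; your sketch of the gradient bound via $\partial_t\omega$ does not show how the bad terms are absorbed without it.

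The second gap is the $C^2$ upper bound. Your test function $\log h^1_1+a\log u+b|X|^2/2$ is the one adapted to the expanding flow (\ref{1.9}) with speed $F^{-\beta}$ (Lemma \ref{l5.1}); for the contracting flow (\ref{1.10}) the second-derivative term carries $J^{pq,rs}$ with $J=\sigma_k^{\beta/k}$, which is \emph{not} concave when $\beta\ge k\ge2$, so ``using the concavity of $\sigma_k^{1/k}$'' does not directly give the needed sign. The paper instead bounds the mean curvature $H$ (sufficient since $H^2>|A|^2$ for $k$-convex hypersurfaces with $\sigma_2>0$) via the quotient $\theta=H/(\hat GJ-a)$ with $a=\tfrac12\inf \hat GJ$, following \cite{LXZ}, and the estimate only closes through the inequality of Lemma \ref{l5.2} (from \cite{GLL}) for $\sigma_k^{pq,rs}\nabla h\nabla h$, combined with the critical-point identity $\nabla H/H=\nabla P/(P-a)$ and a completion of squares whose discriminant is exactly where the hypotheses (1) ($\beta\ge k$, $u\wg$ log-concave in $\nu$) or (2) ($0<\beta<1$, the $\frac{\beta}{\beta-1}$ condition) enter. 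As written, your case (1)/(2) discussion asserts the bad terms are absorbable but supplies no mechanism, and with your choice of test function it is doubtful the estimate closes at all in the regime $2\le k\le n-1$. (A minor point: at a spatial maximum of $\rho$ one only has $\kappa_i\ge 1/\rho$, not $\kappa_i=1/\rho$; the inequality is what the $C^0$ argument actually uses.)
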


We mention that the longtime existence and convergence of flow $\p_tX=\sF X$ and $\p_tX=u\sF \nu$ are  same. Thus, we have the following corollaries by Theorem \ref{t3.1}, \ref{t3.2} and the scaling process of (\ref{x1}). Corollary \ref{c3.3} generalizes our previous result in \cite{DL3}, and Corollary \ref{c3.4}  generalizes the result in \cite{LXZ}. Moreover, we promote subconvergence to full convergence by Theorem \ref{t6.1}.
\begin{corollary}\label{c3.3}
	Let $F\in C^2(\Gamma_+)\cap C^0(\p\Gamma_+)$ satisfy Assumption \ref{a1.3}, and let $M_0$ be a closed, smooth, star-shaped,  admissible hypersurface in $\mathbb{R}^{n+1}$, $n\ge2$, enclosing the origin. If $\a+\delta+\beta<1$, $\beta>0$, $\a\le0,$
	then flow (\ref{x0}) has a unique smooth and admissible solution $M_t$ for all time $t>0$. After rescaling $X\to \varphi^{-1}(t)X$ defined in (\ref{x00}), the hypersurface $\widetilde M_t=\varphi^{-1}M_t$ converges smoothly to a smooth solution of $\psi u^{\a-1}\rho^\delta F^{-\beta}=\eta$.
\end{corollary}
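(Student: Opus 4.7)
The strategy is to recognise Corollary~\ref{c3.3} as the homothetic rescaling of Theorem~\ref{t3.1} applied to the particular function $\bg(X,\nu):=\eta^{-1}\psi(X/|X|)u^{\a-1}\rho^{\d}$. First I would pass from (\ref{x0}) to the normalised flow (\ref{x1}) via $\widetilde X=\varphi^{-1}(t)X$ with $\varphi$ as in (\ref{x00}), a reduction already written out in Section~3. Then I would invoke the observation made by the authors that the long-time behaviour of $\p_{t}X=\sF X$ and of $\p_{t}X=u\sF\nu$ are the same (they share the normal speed $\sF u$ and differ only by a tangential vector field). Since the normal component of the right-hand side of (\ref{x1}) equals $\psi\widetilde u^{\a}\widetilde\rho^{\d}\widetilde F^{-\beta}-\eta\widetilde u$, this equivalence, combined with the linear time change $t'=\eta\tau$, turns (\ref{x1}) into exactly the flow (\ref{1.9}) for $G=\bg$.

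It then remains to verify that $\bg$ satisfies every hypothesis of Theorem~\ref{t3.1} under $\a\le0$ and $\a+\d+\beta<1$; this is essentially Remark~(2) following that theorem. Decomposing $\bg=\phi(X,\nu)\varphi(\la X,\nu\ra)$ with $\varphi(u)=u^{\a-1}$, one reads off $u(\log\varphi)'=\a-1\le-1$, so condition~(ii) of Assumption~\ref{a1.4} holds with $\eps=1-\a$. Radii $r_{1}<r_{2}$ satisfying Assumption~\ref{a1.1} exist because the strict inequality $\a+\d+\beta<1$ controls the $\rho$-power appearing in $\bg(X,X/|X|)$. The bound $d_{\nu}\bg(\nu)/\bg=\a-1\le\beta$ is immediate from $\a\le0<1+\beta$, and the log-convexity of $u\bg$ in $\nu$ reduces to $\a\cdot\mathrm{Hess}_{\nu}\log\la X,\nu\ra$ being semi-positive definite: since $\la X,\nu\ra$ is affine in $\nu$ its logarithm is concave, and the factor $\a\le0$ flips the sign.

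With these checks in place, Theorem~\ref{t3.1} yields a unique smooth admissible solution of the transformed flow for all time, together with $C^{\infty}$-subconvergence to a positive admissible solution $M_{\infty}$ of $\bg F^{-\beta}=1$, i.e.\ of $\psi u^{\a-1}\rho^{\d}F^{-\beta}=\eta$. The subsequential convergence is upgraded to full convergence by Theorem~\ref{t6.1}, cited by the authors just above the corollary. Undoing the tangential diffeomorphism, the time change $t'=\eta\tau$ and the homothety $X=\varphi(t)\widetilde X$ delivers the assertion for the original flow~(\ref{x0}). The only non-bookkeeping point I foresee is the first passage to the $\sF\widetilde X$-form: one must confirm that replacing the non-radial term $-\eta\widetilde X$ by its normal projection $-\eta\widetilde u\,\nu$ is a smooth tangential reparameterisation which does not affect the longtime existence or the uniform estimates. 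Once that is granted, the corollary is a direct transcription of Theorem~\ref{t3.1}.
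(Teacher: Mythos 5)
Your proposal follows essentially the same route as the paper's own (very terse) argument: rescale (\ref{x0}) to (\ref{x1}) as in Section 3, identify the normalized flow (up to tangential terms and the time change $t'=\eta\tau$) with flow (\ref{1.9}) for $G=\eta^{-1}\psi u^{\a-1}\rho^{\delta}$, verify the hypotheses of Theorem \ref{t3.1} exactly as asserted in Remark (2) after that theorem (your checks of Assumption \ref{a1.1}, condition (\rmnum{2}) of Assumption \ref{a1.4} — strictly speaking for $G^{1/\beta}$, which only rescales your constant $\eps$ by $1/\beta$ — the bound $d_\nu G(\nu)/G\le\beta$, and the log-convexity of $uG$ in $\nu$ are all correct), and invoke Theorem \ref{t6.1} to promote subconvergence to full convergence. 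The only point to make explicit is that the remaining hypotheses of Theorem \ref{t3.1} concern the initial data, not $G$ alone: the sign of $(GF^{-\beta}-1)|_{\widetilde M_0}$ and the inclusion $r_1\le\widetilde\rho_0\le r_2$ are arranged by the choice of the constant $C_0$ in the rescaling (\ref{x00}) and by taking $r_1$ small and $r_2$ large (possible since $\a+\delta+\beta<1$), which is precisely the part of the Section 3 reduction you cite.
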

\begin{corollary}\label{c3.4}
Let $M_0$ be a closed, smooth, star-shaped, strictly $k$-convex hypersurface in $\mathbb{R}^{n+1}$, $n\ge2$, enclosing the origin. If $\a+\delta-\beta>1$, either $\beta\ge k$, $\a\ge0$, or $0<\beta\le1$, $\a=1-\beta$.
Then flow
\begin{equation*}
	\begin{cases}
		&\frac{\partial X}{\partial t}(x,t)=-\psi(\frac{X}{|X|}) u^{\alpha}\rho^\delta \sigma_{k}^\frac{\beta}{k}(\k_i)\nu(x,t),\\
		&X(\cdot,0)=X_0.
	\end{cases}
\end{equation*}
has a unique smooth and strictly $k$-convex solution $M_t$ for all time $t>0$. After a proper rescaling, the hypersurface $\widetilde M_t=\varphi^{-1}M_t$ converges smoothly to a smooth solution of $\psi u^{\a-1}\rho^\delta \sigma_{k}^\frac{\beta}{k}=\eta$.
\end{corollary}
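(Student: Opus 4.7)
The plan is to reduce Corollary~\ref{c3.4} to Theorem~\ref{t3.2} through a self-similar rescaling that parallels (\ref{x00})--(\ref{x1}), and then to promote the ensuing subsequential convergence to full convergence via the uniqueness result Theorem~\ref{t6.1}.

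Set $a:=\alpha+\delta-\beta>1$ and introduce $\widetilde X(\tau)=\Phi^{-1}(t)X(t)$ with the decreasing profile $\Phi(t)=((a-1)\eta t+C_0)^{1/(1-a)}$ (so that $\Phi'=-\eta\Phi^a$) together with the time change $d\tau/dt=\Phi^{a-1}$. Since $\widetilde u=\Phi^{-1}u$, $\widetilde\rho=\Phi^{-1}\rho$ and $\widetilde\sigma_k=\Phi^k\sigma_k$, a direct computation converts the contracting flow into
\begin{equation*}
\partial_\tau\widetilde X=\eta\widetilde X-\psi(\widetilde X/\widetilde\rho)\,\widetilde u^\alpha\widetilde\rho^\delta\widetilde\sigma_k^{\beta/k}\nu,
\end{equation*}
whose normal speed equals $\eta\widetilde u(1-\widetilde G\,\widetilde\sigma_k^{\beta/k})$ for $\widetilde G(X,\nu):=\eta^{-1}\psi(X/|X|)u^{\alpha-1}|X|^\delta$. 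Using the equivalence $\partial_tX=\sF X\sim\partial_tX=u\sF\nu$ recalled before Corollary~\ref{c3.3} and a further time change $s=\eta\tau$, the rescaled evolution is precisely flow (\ref{1.10}) for this $\widetilde G$; the finite-time extinction of the original contracting flow corresponds to $\tau\to\infty$.

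Next I would verify every hypothesis of Theorem~\ref{t3.2} for $\widetilde G$. Assumption~\ref{a1.1} for $\widetilde G^{-1/\beta}=\eta^{1/\beta}\psi^{-1/\beta}u^{(1-\alpha)/\beta}\rho^{-\delta/\beta}$ holds once $r_1<r_2$ are chosen to straddle the radius that balances the limit equation on radial spheres, and $C_0$ is tuned so that $\widetilde M_0\subset\{r_1\le\rho\le r_2\}$. Decomposing $\widetilde G=\phi(X)\,\varphi(u)$ with $\phi=\eta^{-1}\psi(X/|X|)\rho^\delta$ and $\varphi(u)=u^{\alpha-1}$, Assumption~\ref{a1.4} follows from condition~(vi) with $\varepsilon=|\alpha-1|$ whenever $\alpha\ne 1$, and from (iv) or (v) when $\alpha=1$ (using $\delta>\beta\ge k$). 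A short computation yields $d_\nu\widetilde G(\nu)/\widetilde G=\alpha-1$, so $d_\nu\widetilde G(\nu)/\widetilde G+\beta\ge 0$ reduces to $\alpha+\beta\ge 1$, which holds in both regimes ($\alpha+\beta\ge k\ge 2$ in the first, $\alpha+\beta=1$ in the second).

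For the remaining convexity/concavity requirement: in the first regime $\beta\ge k$, $\alpha\ge 0$, since the $\nu$-Hessian of $\log\langle X,\nu\rangle$ is $-XX^\top/u^2\preceq 0$ and $\log(u\widetilde G)=\log(\eta^{-1}\psi\rho^\delta)+\alpha\log\langle X,\nu\rangle$, $u\widetilde G$ is log-concave in $\nu$, yielding condition~(1) of Theorem~\ref{t3.2}; in the second regime $\alpha=1-\beta$, $0<\beta\le 1$, the function $(u\widetilde G)^{1/(1-\beta)}=(\eta^{-1}\psi\rho^\delta)^{1/(1-\beta)}\langle X,\nu\rangle$ is linear in $\nu$, so its $\nu$-Hessian vanishes, and via the identity $d^2H-\tfrac{\beta}{\beta-1}(dH)^2/H\preceq 0\iff d^2(H^{1/(1-\beta)})\preceq 0$ this delivers condition~(2). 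Theorem~\ref{t3.2} now provides long-time existence of $\widetilde M_\tau$ and $C^\infty$ subsequential convergence to a strictly $k$-convex solution of $\widetilde G\sigma_k^{\beta/k}=1$, i.e.\ of $\psi u^{\alpha-1}\rho^\delta\sigma_k^{\beta/k}=\eta$; Theorem~\ref{t6.1} upgrades this to full convergence. The delicate point I anticipate is securing the initial sign condition $(\widetilde G\sigma_k^{\beta/k}-1)|_{\widetilde M_0}$ for an arbitrary star-shaped $k$-convex $M_0$, which may require a preliminary barrier step comparing $\widetilde M_0$ with the Assumption~\ref{a1.1} spheres of radii $r_1,r_2$ before Theorem~\ref{t3.2} is invoked.
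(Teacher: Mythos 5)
Your proposal follows essentially the same route the paper intends for Corollary \ref{c3.4}: rescale the contracting flow as in the process around (\ref{x0})--(\ref{x1}), identify the normalized flow with (\ref{1.10}) for $\wg=\eta^{-1}\psi(\frac{X}{|X|})u^{\a-1}\rho^{\delta}$, apply Theorem \ref{t3.2}, and use Theorem \ref{t6.1} to pass from subconvergence to full convergence. Your verification of Assumption \ref{a1.1}, of $\frac{d_\nu\wg(\nu)}{\wg}+\beta\ge0$, and of conditions (1)/(2) of Theorem \ref{t3.2} in the two parameter regimes is correct; just note that Assumption \ref{a1.4} is required of $\wg^{-1/\beta}$ rather than of $\wg$ itself. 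For $\a\ne1$ condition (\rmnum{6}) is equivalent for the two (the constant becomes $|1-\a|/\beta$), and for $\a=1$ one has $\rho\,\wg^{-1/\beta}\propto\rho^{1-\delta/\beta}$ with $\delta>\beta$, so (\rmnum{4})/(\rmnum{5}) do hold for the correct object even though $\rho\wg$ itself is increasing.

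The point you leave open, the sign of $(\wg\sigma_k^{\beta/k}-1)\vert_{\widetilde M_0}$, is not delicate and needs no barrier step: under dilation $X\mapsto\lambda X$ the quantity $\wg\sigma_k^{\beta/k}=\eta^{-1}\psi u^{\a-1}\rho^{\delta}\sigma_k^{\beta/k}$ is homogeneous of degree $\a+\delta-\beta-1>0$, so choosing $C_0$ (equivalently $\Phi(0)$) sufficiently large or small makes this quantity uniformly $>1$ or $<1$ on $\widetilde M_0$ --- exactly the remark the paper makes right after (\ref{x0}). This does not conflict with the annulus condition, because for this homogeneous $\wg$ the radial function $\rho\mapsto\rho\,\wg^{-1/\beta}$ on spheres is strictly decreasing from $+\infty$ to $0$, so Assumption \ref{a1.1} holds with $r_1$ arbitrarily small and $r_2$ arbitrarily large straddling the unique balancing radius, and these may be fixed after $C_0$ so that $r_1\le\rho(\widetilde M_0)\le r_2$. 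Two small corrections: since $\a+\delta-\beta>1$ the unnormalized flow does not extinct in finite time --- your time change maps $[0,\infty)$ onto $[0,\infty)$, which is precisely what gives ``for all $t>0$''; and at the endpoint $\beta=1$, $\a=0$ of the second regime, condition (2) of Theorem \ref{t3.2} degenerates, but there $u\wg$ is independent of $\nu$, so the terms that condition controls in Lemma \ref{l5.3} vanish and the argument goes through (the paper glosses over the same endpoint).
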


At the end of this section, we derive the following evolution property.
\begin{lemma}\label{l3.3}
	Let $M(t)$ be a smooth family of closed hypersurfaces in $\mR^{n+1}$ evolving along the flow$$\p_tX=\mathscr{F}X,$$
	where $X$ is the position vector and $\mathscr{F}$ is a function defined on $M(t)$. Then we have the following evolution equations.
	\begin{equation}\label{3.1}
		\begin{split}
			\p_tg_{ij}&=2\sF g_{ij}+\rho\nn_i\sF\nn_j\rho+\rho\nn_j\sF\nn_i\rho,\\
			\p_t\nu&=-u\nn\sF,\\
			\p_tu&=u\sF-u\la X,\nn\sF\ra,\\
		\p_th^i_{j}&=-u\nn_j\nn^i\sF-\sF h^i_j-\rho h_{jl}\nn^l\sF\nn^i\rho-\rho h_{jl}\nn^i\sF\nn^l\rho.
		\end{split}
	\end{equation}
\end{lemma}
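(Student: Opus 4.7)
The plan is to verify each identity by a direct computation, using the ambient decomposition $X = u\nu + X^T$ of the position vector together with the identity $\la X,\p_i X\ra = \rho\,\nn_i\rho$, which is the source of every $\rho\nn\rho$ factor appearing in the evolution equations. I would treat the four identities in the order stated, since each one is used in deriving the next.

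For $\p_t g_{ij}$, I would differentiate $g_{ij}=\la\p_i X,\p_j X\ra$ in $t$, interchange $\p_t$ with $\p_i$, apply the product rule to $\p_t X=\sF X$, and substitute $\la X,\p_i X\ra=\rho\,\nn_i\rho$; this directly yields the $2\sF g_{ij}$ term together with the two tangential corrections. For $\p_t\nu$, the unit-length constraint forces $\p_t\nu$ to be tangential, and its components are determined by differentiating the orthogonality relation $\la\nu,\p_j X\ra=0$ in $t$, which after using $\p_t X=\sF X$ and $\la X,\nu\ra=u$ gives $\p_t\nu = -u\,\nn\sF$. The formula for $\p_t u$ then follows immediately by differentiating $u=\la X,\nu\ra$ in $t$ and substituting the preceding two identities.

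The main step, and essentially the only nontrivial calculation in the lemma, is the Weingarten tensor. I would start from $h_{ij}=-\la\p_i\p_j X,\nu\ra$ (the Gauss formula with the paper's sign convention from \eqref{2.5}), differentiate in $t$, expand $\p_i\p_j(\sF X)$ by the product rule, and convert ambient second partial derivatives of $\sF$ via $\p_i\p_j\sF = \nn_i\nn_j\sF + \Gamma^k_{ij}\nn_k\sF$. The key observation to verify is that the $\Gamma^k_{ij}\nn_k\sF$ term produced in this step cancels exactly against the contribution from $\la\p_i\p_j X,\p_t\nu\ra$, once the tangential part $\Gamma^k_{ij}\p_k X$ of $\p_i\p_j X$ is extracted via the Gauss formula. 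This cancellation leaves the clean identity $\p_t h_{ij} = \sF h_{ij} - u\,\nn_i\nn_j\sF$. Finally, raising an index through $\p_t h^i_j = (\p_t g^{ik})h_{kj} + g^{ik}\p_t h_{kj}$ and substituting $\p_t g^{ik} = -g^{im}g^{kl}\p_t g_{ml}$ from the first identity, the $2\sF$-contributions combine to produce the coefficient $-\sF$ on $h^i_j$, while the two $\rho\nn\rho\otimes\nn\sF$ pieces in $\p_t g^{ik}$ yield the two stated tangential correction terms. The $\Gamma$-cancellation in the derivation of $\p_t h_{ij}$ is really the only subtle point; everything else is bookkeeping.
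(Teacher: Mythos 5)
Your proposal is correct and follows essentially the same route as the paper: differentiate $g_{ij}=\la\p_iX,\p_jX\ra$, obtain $\p_t\nu$ from the orthogonality relations, then $\p_tu$, then $\p_th_{ij}=\sF h_{ij}-u\nn_i\nn_j\sF$ from $h_{ij}=-\la\p_i\p_jX,\nu\ra$, and finally raise an index using $\p_tg^{ik}$. The only cosmetic difference is that the paper works in normal coordinates at the point (so the Christoffel terms and the $\la\p_i\p_jX,\p_t\nu\ra$ contribution vanish outright), whereas you carry the $\Gamma^k_{ij}\nn_k\sF$ terms and verify their cancellation explicitly, which is the same computation.
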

\begin{proof}
We derive these evolution equations under the normal coordinate system. We denote that $\bar{g}(\cdot,\cdot)=\la\cdot,\cdot\ra$. By direct calculations, we have
\begin{align*}
\frac{\p}{\p t}g_{ij}=&\p_t\la\p_iX,\p_jX\ra\\
=&\la\p_i(\sF X),\p_jX\ra+\la\p_iX,\p_j(\sF X)\ra\\
=&2\sF g_{ij}+\p_i\sF\la X,\p_jX\ra+\p_j\sF\la X,\p_iX\ra\\
=&2\sF g_{ij}+\frac{1}{2}\p_i\sF\p_j\rho^2+\frac{1}{2}\p_j\sF\p_i\rho^2\\
=&2\sF g_{ij}+\rho\nn_i\sF\nn_j\rho+\rho\nn_j\sF\nn_i\rho.
\end{align*}
\begin{equation}\label{3.2}
\begin{split}
\frac{\p}{\p t}\nu=&\la\p_t\nu,\p_iX\ra g^{il}\p_lX\\
=&-\la\nu,\p_i(\sF X)\ra g^{il}\p_lX\\
=&-u\p_i\sF g^{il}\p_lX\\
=&-u\nn\sF.
\end{split}
\end{equation}
Using (\ref{3.2}), we have the evolution of the support function $u$
\begin{align*}
\frac{\p}{\p t}u=&\p_t\la X,\nu\ra\\
=&u\sF-u\la X,\nn\sF\ra.
\end{align*}
Now we calculate the evolution of $h_{ij}$,
\begin{align*}
\frac{\p}{\p t}h_{ij}=&-\p_t\la\p_i\p_j X,\nu\ra\\
=&-\la\p_i\p_j(\sF X),\nu\ra\\
=&-u\nn_j\nn_i\sF+\sF h_{ij}.
\end{align*}
Note that
\begin{align*}
\frac{\p}{\p t}g^{ij}=&-g^{il}(\p_tg_{lm})g^{mj}=-2\sF g^{ij}-\rho\nn^i\sF\nn^j\rho-\rho\nn^j\sF\nn^i\rho.
\end{align*}
Thus,
\begin{align*}
\p_th^i_{j}=&\p_th_{jl}g^{li}+h_{jl}\p_tg^{li}\\
=&-u\nn_j\nn^i\sF-\sF h^i_j-\rho h_{jl}\nn^l\sF\nn^i\rho-\rho h_{jl}\nn^i\sF\nn^l\rho.
\end{align*}
\end{proof}
To derive the $C^1$-estimate, we need the following evolution equation.  For convenience, we denote
$$Q=G(X,\nu)F^{-\beta}.$$
Recall $G(X,\nu)=\phi(X,\nu)\varphi(u)$. we define
\begin{equation*}
	F([a_{ij}])=F(\mu_1,\cdots,\mu_n),
\end{equation*}
where $\mu_1,\cdots,\mu_n$ are the eigenvalues of matrix $[a_{ij}]$. It is not difficult to see that the eigenvalues of $[F^{ij}]=[\frac{\p F}{\p a_{ij}}]$ are $\frac{\p F}{\p\mu_1},\cdots,\frac{\p F}{\p\mu_n}$.
\begin{lemma}\label{l3.4}
Let $\theta=-\log u+\zeta(|X|^2)$. At the spatial maximum point $X(t)$ of $\theta$, if $X$ is not parallel to the normal direction of $X$ at $X(t)$, along flow (\ref{1.8}), we have
\begin{align*}
\cL\theta=&(1-2\zeta'|X|^2)\(\Psi(1)-\Psi(Q)\)+\Psi'Q\(\frac{d_X\phi(X)}{\phi}+\beta\)-\Psi'Q\frac{d_X\phi(u\nu)}{\phi}\\&+2u\la X,e_1\ra\zeta'\Psi'Q\frac{d_\nu \phi(e_1)}{\phi}
+2\zeta'\Psi'Q\la X,e_1\ra^2\frac{u\varphi'}{\varphi}-\beta\frac{u\Psi'Q}{F}F^{ij}(h^2)_{ij}\\&-\beta\frac{u\Psi'Q }{F}\zeta'(2F^{ii}-2uF)-4\beta\frac{u\Psi'Q }{F}((\zeta')^2+\zeta'')\la X,e_1\ra^2 F^{11},
\end{align*}
where $\cL=\frac{\p}{\p t}-\beta\frac{u\Psi'Q F^{ij}}{F}\nn^2_{ij}$ and we choose the local orthonormal frame $\{e_1,\cdots,e_n\}$ on $M$ satisfying $\la X,e_1\ra\ne0$ and $\la X,e_i\ra=0$, $i\ge2$.
\end{lemma}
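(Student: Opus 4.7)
The plan is to split $\cL\theta$ into the time-derivative part (handled via Lemma \ref{l3.3}) and the spatial Hessian part $-\beta\frac{u\Psi'Q}{F}F^{ij}\nn_i\nn_j\theta$, then show that the two inconvenient $\la X,\nn F\ra$ contributions cancel. Since $\sF=\Psi(Q)-\Psi(1)$ along the flow \eqref{1.8}, Lemma \ref{l3.3} gives $\p_t u=u\sF-u\la X,\nn\sF\ra$ and $\p_t|X|^2=2\sF|X|^2$, so
\begin{equation*}
\p_t\theta=-\tfrac{\p_t u}{u}+\zeta'\p_t|X|^2=(1-2\zeta'|X|^2)(\Psi(1)-\Psi(Q))+\la X,\nn\sF\ra.
\end{equation*}

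Next I would expand $\la X,\nn\sF\ra=\Psi'\la X,\nn Q\ra$ using $Q=\phi(X,\nu)\varphi(u)F^{-\beta}$, producing separate contributions from $\nn\phi$, $\nn u$ (via $\varphi'$), and a term $-\beta\frac{\Psi'Q}{F}\la X,\nn F\ra$. The essential simplification arises in the adapted orthonormal frame with $\la X,e_1\ra\neq 0$ and $\la X,e_i\ra=0$ for $i\ge 2$, which exists precisely because $X$ is not parallel to $\nu$ at this point. The identity $u_i=h_i^k\la X,e_k\ra$ from Lemma \ref{l2.2}, combined with the critical-point condition $u_i=2u\zeta'\la X,e_i\ra$, then forces $h_{11}=2u\zeta'$ and $h_{1i}=0$ for $i\ge 2$ in this frame. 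Using $\nn_i\phi=d_X\phi(e_i)+h_i^k d_\nu\phi(e_k)$ together with the decomposition $X=u\nu+\la X,e_1\ra e_1$, the tangential contribution collapses to
\begin{equation*}
\la X,\nn\phi\ra=d_X\phi(X)-d_X\phi(u\nu)+2u\zeta'\la X,e_1\ra d_\nu\phi(e_1),
\end{equation*}
supplying three of the asserted terms; likewise $\la X,\nn u\ra=2u\zeta'\la X,e_1\ra^2$ supplies the $\frac{u\varphi'}{\varphi}$ term.

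For the Hessian part I would compute $\nn_i\nn_j\theta=-u^{-1}u_{ij}+u^{-2}u_iu_j+4\zeta''\la X,e_i\ra\la X,e_j\ra+2\zeta'(g_{ij}-uh_{ij})$, apply $F^{ij}$, and invoke $F^{ij}h_{ij}=F$ (homogeneity of degree one) together with the frame orthogonality to reduce the quadratic pieces to a single $F^{11}\la X,e_1\ra^2$ block and the trace $2\zeta'(F^{ii}-uF)$. The decisive cancellation is provided by Lemma \ref{l2.2}, which in the adapted frame gives
\begin{equation*}
F^{ij}u_{ij}=\la X,\nn F\ra+F-uF^{ij}(h^2)_{ij},
\end{equation*}
so the $\la X,\nn F\ra$ arising from the Hessian exactly annihilates the one inherited from $\nn Q$, the constant $F$ accounts for the $+\beta\Psi'Q$ that is absorbed into $\Psi'Q(\frac{d_X\phi(X)}{\phi}+\beta)$, and the remaining pieces produce the terms $-\beta\frac{u\Psi'Q}{F}F^{ij}(h^2)_{ij}$, $-\beta\frac{u\Psi'Q}{F}\zeta'(2F^{ii}-2uF)$, and $-4\beta\frac{u\Psi'Q}{F}((\zeta')^2+\zeta'')\la X,e_1\ra^2 F^{11}$ stated in the lemma.

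The main obstacle, though essentially bookkeeping, is recognising that the first-order critical-point information propagates through the Weingarten-type identity of Lemma \ref{l2.2} to pin down both $h_{11}$ and all off-diagonal entries $h_{1i}$; without this the $d_\nu\phi$-contribution would retain a messy $\sum_k h_{1k}d_\nu\phi(e_k)$ tail rather than collapsing to the single scalar $d_\nu\phi(e_1)$. Once this is in place, assembling the four pieces and cancelling the matching $\la X,\nn F\ra$ terms delivers the claimed identity.
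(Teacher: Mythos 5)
Your proposal is correct and follows essentially the same route as the paper: the paper computes $\cL u$ via Lemma \ref{l3.3} and Lemma \ref{l2.2} (where the $\la X,\nn F\ra$ contribution from $\nn Q$ cancels against the one from $F^{ij}\nn^2_{ij}u$, and the critical-point condition plus $\nn_iu=h_i^k\la X,e_k\ra$ give $h_{11}=2u\zeta'$, $h_{1i}=0$), then assembles $\cL\theta=-\frac{\cL u}{u}+\cdots$, which is the same computation you organize by splitting $\p_t\theta$ and $F^{ij}\nn^2_{ij}\theta$ directly. The decomposition $X=\la X,e_1\ra e_1+u\nu$ and the homogeneity identity $F^{ij}h_{ij}=F$ are used identically in both, so the difference is only bookkeeping.
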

\begin{proof}
This lemma can be derived by direct calculation.  At the spatial maximum point of $\theta$, we have
$$0=\nn_i\theta=-\frac{\nn_iu}{u}+2\zeta'\la X,e_i\ra.$$
By (\ref{2.13}), we get
$$h_{11}=2u\zeta', \quad h_{1i}=0, \quad i\ge2,$$
where we used $\la X,e_1\ra\ne0$ and $\la X,e_i\ra=0$ for $i\ge 2$.
Therefore, it is possible to rotate the coordinate system such that $\{e_i\}_{i=1}^n$ are the principal curvature directions of the second fundamental form $(h_j^i)$, i.e. $h_j^i=h_{ij}=h_{ij}\delta_j^i$, which means that $(F^{ij})$ is also diagonal. Then we can give the evolution equation of $u$ by (\ref{2.13}),
\begin{equation}\label{3.3}
\begin{split}
\cL u=&\frac{\p u}{\p t}-\beta\frac{u\Psi'Q F^{ij}}{F}\nn^2_{ij}u\\
=&u\(\Psi(Q)-\Psi(1)\)-u\la X,\Psi'\nn Q\ra-\beta\frac{u\Psi'Q}{F}\(\la X,\nn F\ra+F-uF^{ij}(h^2)_{ij}\)\\
=&u\(\Psi(Q)-\Psi(1)\)-u\Psi'Q\la X,e_1\ra\frac{d_X\phi(e_1)}{\phi}-u\Psi'Q\la X,e_1\ra\frac{d_\nu \phi(h_{11}e_1)}{\phi}\\
&-u\Psi'Q\la X,e_1\ra\frac{\varphi'\nn_1u}{\varphi}-\beta u\Psi'Q+\beta\frac{u^2\Psi'Q}{F}F^{ij}(h^2)_{ij}\\
=&u\(\Psi(Q)-\Psi(1)\)-u\Psi'Q\la X,e_1\ra\frac{d_X\phi(e_1)}{\phi}-2u^2\zeta'\Psi'Q\la X,e_1\ra\frac{d_\nu \phi(e_1)}{\phi}\\
&-2u\zeta'\Psi'Q\la X,e_1\ra^2\frac{u\varphi'}{\varphi}-\beta u\Psi'Q+\beta\frac{u^2\Psi'Q}{F}F^{ij}(h^2)_{ij}.
\end{split}
\end{equation}
Note that $X=\la X,e_1\ra e_1+u\nu$.
Combing (\ref{1.8}) and (\ref{3.3}), we have
\begin{align*}
\cL\theta=&-\frac{\cL u}{u}+2\zeta'(\Psi(Q)-\Psi(1))|X|^2\\
&-\beta\frac{u\Psi'Q F^{ii}}{F}\(4((\zeta')^2+\zeta'')\la X,e_1\ra^2\delta_{1i}+\zeta'(2-2uh_{ii})\)\\
=&(1-2\zeta'|X|^2)\(\Psi(1)-\Psi(Q)\)+\Psi'Q\(\frac{d_X\phi(X)}{\phi}+\beta\)-\Psi'Q\frac{d_X\phi(u\nu)}{\phi}\\&+2u\la X,e_1\ra\zeta'\Psi'Q\frac{d_\nu \phi(e_1)}{\phi}
+2\zeta'\Psi'Q\la X,e_1\ra^2\frac{u\varphi'}{\varphi}-\beta\frac{u\Psi'Q}{F}F^{ij}(h^2)_{ij}\\&-\beta\frac{u\Psi'Q }{F}\zeta'(2F^{ii}-2uF)-4\beta\frac{u\Psi'Q }{F}((\zeta')^2+\zeta'')\la X,e_1\ra^2 F^{11}.
\end{align*}
\end{proof}

\section{The $C^0$ and $C^1$ estimates}
In this section, we will establish the $C^0$ and $C^1$ estimates of general flow (\ref{1.8}) without the concavity of $F$ and boundary condition $F\vert_{\p\Gamma}=0$ since this two conditions are not needed to derive the $C^0$ and $C^1$ estimates. In the rest of this section we still say that $F$ satisfies Assumption \ref{a1.3} and will not repeat that $F$ doesn't need to satisfy   concavity  and $F\vert_{\p\Gamma}=0$.

We consider the flow equation (\ref{1.8}) of radial graphs over $\mathbb{S}^n$ in $\mathbb{R}^{n+1}$. It is easy to see that the evolution of the radial function $\rho=\rho(X(z,t),t)$ satisfies the following parabolic initial value problem on $\mathbb{S}^n$,
\begin{equation}\label{2.17}
	\begin{cases}
		\p_t\rho&=\(\Psi\(G(X,\nu)F^{-\beta}(\k_i)\)-\Psi(1)\)\rho, \;\;(z,t)\in\mathbb{S}^n\times [0,\infty),\\
		\rho(\cdot,0)&=\rho_0,
	\end{cases}
\end{equation}
where $\rho_0$ is the radial function of the initial hypersurface.

Equivalently, the equation for $\gamma$ satisfies
\begin{equation}\label{2.18}
	\p_t\gamma=\Psi\(G(X,\nu)F^{-\beta}(\k_i)\)-\Psi(1).
\end{equation}

We first show the $C^0$-estiamte of the solution to (\ref{2.17}).
\begin{lemma}\label{l4.2}
Let $\rho(x,t)$, $t\in[0,T)$, be a smooth, star-shaped solution to (\ref{2.17}). If $G^\frac{1}{\beta}$ satisfies (\ref{1.2}), then
\begin{equation*}
r_1\leq \rho(\cdot,t)\leq r_2.
\end{equation*}
\end{lemma}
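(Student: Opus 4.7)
The plan is a standard parabolic maximum principle applied to the scalar evolution (\ref{2.17}) for $\rho$. I would set $\rho_{\max}(t) := \max_{z\in \mathbb{S}^n}\rho(z,t)$ and $\rho_{\min}(t) := \min_{z\in \mathbb{S}^n}\rho(z,t)$, both Lipschitz in $t$, and show that whenever $\rho_{\max}(t_0) = r_2$ one has $\partial_t\rho(z_0,t_0)\le 0$ at any spatial maximizer $z_0$, with the symmetric statement at minima. A standard ODE comparison argument, using the hypothesis $\rho(\cdot,0)\in[r_1,r_2]$, then keeps $\rho(\cdot,t)\in[r_1,r_2]$ for all $t\in[0,T)$.

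The key computation occurs at the upper extremum. At $(z_0,t_0)$ one has $D\rho = 0$, which via the polar formulas recorded just before (\ref{2.14}) gives $\omega = 1$, $\nu = X/|X|$, and $u = \rho = r_2$; and the second-derivative test yields $D^2\gamma(z_0,t_0)\le 0$. Substituting into the formula (\ref{2.14}) for $h^i_j$ (where the gradient terms $\gamma^i$ vanish) gives $h^i_j = \rho^{-1}(\delta^i_j - e^{ik}\gamma_{kj}) \ge \rho^{-1}\delta^i_j$, so the principal curvatures satisfy $\kappa_i \ge 1/r_2$ and in particular $\kappa \in \Gamma_+ \subseteq \Gamma$. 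Homogeneity of degree one and monotonicity of $F$ in Assumption \ref{a1.3} then yield $F(\kappa) \ge F(1,\ldots,1)/r_2$, i.e.\ $F^{-\beta}(\kappa) \le r_2^\beta / F(1,\ldots,1)^\beta$. Combined with the second inequality in (\ref{1.2}) applied to $G^{1/\beta}$ at $|X| = r_2$, namely $G(X,X/|X|) \le F(1,\ldots,1)^\beta/r_2^\beta$, this yields $GF^{-\beta}\le 1$ at $(z_0,t_0)$. Strict monotonicity of $\Psi$ then gives $\Psi(GF^{-\beta}) - \Psi(1) \le 0$, hence $\partial_t\rho(z_0,t_0)\le 0$ by (\ref{2.17}). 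The lower bound is obtained by reversing every inequality: at a spatial minimum $D^2\gamma \ge 0$ forces $\kappa_i \le 1/\rho_{\min}$, so $F(\kappa) \le F(1,\ldots,1)/\rho_{\min}$, and the first half of (\ref{1.2}) at $\rho_{\min}=r_1$ yields $GF^{-\beta} \ge 1$, whence $\partial_t \rho \ge 0$.

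I do not anticipate a serious obstacle here. The only subtlety worth flagging is that one must ensure the curvature vector at the extremum sits in the admissible cone $\Gamma$ so that $F$ is defined and monotone there, but this is automatic because the extremum test already places $\kappa$ in $\Gamma_+ \subseteq \Gamma$. Note also that neither the concavity of $F$ nor the boundary condition $F|_{\partial\Gamma}=0$ enters the argument, which is consistent with the opening remark of Section 4.
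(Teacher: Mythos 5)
Your proof is correct and follows essentially the same route as the paper: an extremum-point test on the radial graph formula for $h^i_j$ (you use the $\gamma$-form (\ref{2.14}), the paper uses the equivalent $\rho$-form), homogeneity and monotonicity of $F$ to compare with the sphere, the barrier condition (\ref{1.2}) for $G^{1/\beta}$, and monotonicity of $\Psi$ to get the sign of $\partial_t\rho$ at the touching point. The only minor imprecision is your closing remark that the extremum test places $\kappa$ in $\Gamma_+$ — that holds at the maximum but not at the minimum, where one instead uses that the flow solution is admissible ($\kappa\in\Gamma$) so that $F$ is defined and the comparison $F(\kappa)\le F(1,\dots,1)/\rho$ still follows from monotonicity; this does not affect the validity of the argument.
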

\begin{proof}
Let $\rho_{\max}(t)=\max_{z\in \mS^n}\rho(\cdot,t)=\rho(z_t,t)$. For fixed time $t$, at the point $z_t$, we have $$D_i\rho=0 \text{ and } D^2_{ij}\rho\leq0.$$
Note that $\omega=1$, $u=\frac{\rho}{\omega}=\rho$ and
\begin{equation}\label{4.1}
	\begin{split}
h^i_j &=\frac{1}{\rho^2\sqrt{\rho^2+|D\rho|^2}}(e^{ik}-\frac{\rho^i\rho^k}{\rho^2+|D\rho|^2})(-\rho D_kD_j\rho+2\rho_k\rho_j+\rho^2e_{kj})\\
&=-\rho^{-2}\rho_{ij}+\rho^{-1}e_{ij}.
\end{split}
\end{equation}
At the point $z_t$,
 we have $F^{-\beta}(h^i_j)\leq\eta(\frac{1}{\rho})^{-\beta}$.
 Thus
$$\frac{d}{dt}\rho_{\max}\leq\rho\(\Psi\( G(X,\frac{X}{\rho})\eta(\frac{1}{\rho})^{-\beta}\)-\Psi(1)\),$$
where we have used $\Psi$ is increasing and $\nu=\frac{X}{\rho}$ at $z_t$. Since $r_1\le\rho(M_0)\le r_2$, we can consider the point where the maximum point touches the sphere $\vert X\vert=r_2$ for the first time. Since $G^\frac{1}{\beta}$ satisfies (\ref{1.2}), we have $\p_t\rho\le0$ at this point, i.e.
$\rho_{\max}\le r_2.$
Similarly, we can derive $\rho_{\min}\ge r_1.$
\end{proof}

\begin{lemma}\label{l4.3}
 Let $X(\cdot,t)$ be the solution to the flow (\ref{1.8}) which encloses the origin for $t\in[0,T)$. If $Q-1$ has a sign at $t=0$, then it has a sign for all time. Moreover, if $G$ satisfies $\frac{\p}{\p\rho}\(\rho G^\frac{1}{\beta}(X,\nu)\)\le0$, there is a positive constant $C_1$ depending on the initial hypersurface and $\beta$,  such that
$$\frac{1}{C_1}\leq Q\leq C_1.$$
\end{lemma}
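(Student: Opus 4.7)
Set $\sF := \Psi(Q)-\Psi(1)$, so the flow~\eqref{1.8} reads $\partial_tX = \sF X$ and the stationary equation is $Q\equiv 1$. The plan is to derive a parabolic evolution equation for $Q$ along the flow and apply the parabolic maximum principle twice: first at a point where $\sF$ vanishes (for sign preservation), and then at an extremum of $Q$ itself (for the uniform bound, via the extra structural hypothesis on $G$). Differentiating $Q = GF^{-\beta}$ in time using Lemma~\ref{l3.3}, the chain rule $\partial_tG=(d_XG)(\partial_tX)+(d_\nu G)(\partial_t\nu)$, and Euler's identity $F=F^i_jh^j_i$ for $\partial_tF=F^j_i\partial_th^i_j$, one produces a second-order parabolic PDE for $Q$. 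At any spatial critical point of $Q$ the tangent corrections in $\partial_th^i_j$ from Lemma~\ref{l3.3} (which are linear in $\nabla\sF=\Psi'(Q)\nabla Q$) and the $\Psi''$ contribution to $\nabla^2\sF$ both drop, leaving the clean expression
\[
\partial_tQ \;=\; \beta\,uQ\,\Psi'(Q)\,F^{-1}F^{ij}\nabla_i\nabla_jQ \;+\; F^{-\beta}\sF\,(d_XG)(X) \;+\; \beta Q\,\sF.
\]

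\textbf{Sign preservation.} Writing $\sF=\Psi'(\xi)(Q-1)$ via the mean value theorem, the full evolution (augmented by the first-order drift terms that are present away from critical points) is a linear uniformly parabolic equation for $Q-1$ with bounded coefficients on $M^n\times[0,T)$, so the parabolic maximum principle preserves the sign of $Q-1$. Equivalently and more concretely: if $Q\ge 1$ at $t=0$ and $t_0>0$ is the first time where $Q_{\min}(t_0)=1$, then at a corresponding spatial minimum we have $\sF=0$, $\nabla Q=0$ and $F^{ij}\nabla_i\nabla_jQ\ge 0$, so the displayed formula forces $\partial_tQ\ge 0$ there, contradicting a loss of sign. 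The case $Q\le 1$ is symmetric.

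\textbf{Uniform bound.} The hypothesis $\partial_\rho(\rho G^{1/\beta})\le 0$ expands to $(d_XG)(X)=\rho\,\partial_\rho G\le -\beta G$. At a spatial maximum of $Q$ where $Q_{\max}(t)\ge 1$ (so $\sF\ge 0$ there), the elliptic term in the displayed evolution is $\le 0$ and the two zeroth-order pieces collapse as
\[
F^{-\beta}\sF\,(d_XG)(X)+\beta Q\,\sF \;\le\; -\beta Q\,\sF+\beta Q\,\sF \;=\; 0,
\]
so $Q_{\max}$ is non-increasing as long as it exceeds $1$. Symmetrically, at a spatial minimum with $Q_{\min}(t)\le 1$ (so $\sF\le 0$) the inequality $(d_XG)(X)\le -\beta G$ flips upon multiplication by the negative quantity $\sF$, the elliptic term is $\ge 0$, and the same calculation yields $\partial_tQ_{\min}\ge 0$ whenever $Q_{\min}\le 1$. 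Combining, $\min(1,Q_{\min}(0))\le Q(\cdot,t)\le \max(1,Q_{\max}(0))$, which gives the stated $C_1=C_1(M_0,\beta)$.

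\textbf{Main obstacle.} The technical core is the clean derivation of the evolution of $Q$ above: one must simultaneously track the derivatives of $G$ at moving $(X,\nu)$, the tangent corrections in $\partial_th^i_j$ from Lemma~\ref{l3.3}, and the $\Psi''$-contribution to $\nabla^2\sF$. All three of these pieces vanish at critical points of $Q$, which is precisely what allows the pure maximum-principle approach to succeed without using either the concavity of $F$ or the boundary behavior $F|_{\partial\Gamma}=0$; this is in keeping with the opening remark of Section~4 that these two assumptions are not needed for the $C^0$ and $C^1$ estimates.
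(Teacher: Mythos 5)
Your proposal is correct and follows essentially the same route as the paper: it derives the evolution equation of $Q=GF^{-\beta}$ from Lemma \ref{l3.3} (your displayed formula at critical points agrees with the paper's equation (\ref{4.3}) once $\nn Q=0$ is imposed), converts the hypothesis $\frac{\p}{\p\rho}\(\rho G^{1/\beta}\)\le0$ into $\frac{d_XG(X)}{G}+\beta\le0$, and concludes both the sign preservation of $Q-1$ and the two-sided bound by the parabolic maximum principle at spatial extrema. The only cosmetic difference is that you phrase sign preservation via the mean value theorem and a linearized equation for $Q-1$, whereas the paper simply invokes the maximum principle using the monotonicity of $\Psi$; the substance is identical.
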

\begin{proof}
In order to calculate the evolution equation of $Q$, we need to deduce the evolution equation of $F$ firstly by Lemma \ref{l3.3}.
\begin{equation}\label{4.2}
	\begin{split}
		\frac{\p F}{\p t}=&F^{ij}\frac{\p h^i_j}{\p t}\\
		=&F^{ij}\(-u\nn^i\nn_j(\Psi(Q))-(\Psi(Q)-\Psi(1))h^i_j-2\rho h_{jl}\nn^l(\Psi(Q))\nn^i\rho\)\\
		=&F^{ij}\(-u\Psi''\nn^iQ\nn_jQ-u\Psi'\nn^i\nn_jQ-(\Psi(Q)-\Psi(1))h^i_j-2\rho h_{jl}\Psi'\nn^lQ\nn^i\rho\).
			\end{split}
\end{equation}
At the point where $Q$ attains its spatial maximum or minimum, $\nn Q=0$. We use (\ref{1.8}), Lemma \ref{l3.3} and (\ref{4.2}) to deduce
\begin{equation}\label{4.3}
\begin{split}
\p_tQ=&\p_t(G(X,\nu)F^{-\beta})\\
=&Q\frac{d_XG(X)(\Psi(Q)-\Psi(1))}{G}-Q\frac{d_\nu G(u\Psi'\nn Q)}{G}-\beta\frac{Q}{F}\p_tF\\
=&Q\(\frac{d_XG(X)}{G}+\beta\)(\Psi(Q)-\Psi(1))-Q\frac{d_\nu G(u\Psi'\nn Q)}{G}\\
&+\beta\frac{Q}{F}F^{ij}\(u\Psi''\nn^iQ\nn_jQ+u\Psi'\nn^i\nn_jQ+2\rho h_{il}\Psi'\nn^lQ\nn^j\rho\).
\end{split}
\end{equation}
Note that $\Psi$ is increasing. By the maximum principle, if $Q-1$ has a sign at $t=0$, then it has a sign for all time.

If $\frac{\p}{\p\rho}\(\rho G^\frac{1}{\beta}(X,\nu)\)\le0$, we have $\frac{\p}{\p\rho}\(\rho^\beta G(X,\nu)\)\le0$. Thus
\begin{align*}
0\ge\frac{\p}{\p\rho}\(\rho^\beta G(X,\nu)\)=\rho^{\beta-1}\(\beta G+\rho\frac{\p G(X,\nu)}{\p\rho}\)=&\rho^{\beta-1}\(\beta G+\rho d_XG(\frac{\p X}{\p\rho})\)\\
=&\rho^{\beta-1}\(\beta G+d_XG(X)\),
\end{align*}
i.e.
\begin{equation*}
\frac{d_XG(X)}{G}+\beta\le0.
\end{equation*}
Then applying the maximum principle to (\ref{4.3}) we know that $\frac{1}{C_1}\leq Q\leq C_1$, where $C_1$ is a positive constant depending on the initial hypersurface and $\beta$.
\end{proof}

According to Lemma \ref{l3.4} and \ref{l4.3}, we can derive the $C^1$ estimates.
\begin{lemma}\label{l4.4}
Let $X(\cdot,t)$ be the solution to the flow (\ref{1.8}) which encloses the origin for $t\in[0,T)$. If $F$ satisfies Assumption \ref{a1.3}, $G^\frac{1}{\beta}$ satisfies (\rmnum{2}) or (\rmnum{4}) or (\rmnum{5}) of Assumption \ref{a1.4}, and $(Q-1)|_{M_0}$ has a sign and $\rho$ has positive lower and upper bound,
then there are positive constants $C_2$, $C_3$ depending on the initial hypersurface, $\Vert G\Vert_{C^0}$, $\Vert G\Vert_{C^1}$ and $\beta$,  such that
	$$\vert D \rho\vert\leq C_2\quad \text{  and } \quad u\ge C_3.$$
\end{lemma}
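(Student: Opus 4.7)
By Lemma \ref{l4.2} we have $r_1 \le \rho \le r_2$, and the relation $u = \rho/\omega$ with $\omega = \sqrt{1+|D\rho|^2/\rho^2}$ shows that an upper bound on $|D\rho|$ is equivalent to a positive lower bound on $u$. Hence it suffices to prove $u \ge C_3$, and I would do this by applying the parabolic maximum principle to the auxiliary function
\begin{equation*}
\theta := -\log u + \zeta(|X|^2),
\end{equation*}
for a smooth function $\zeta$ on $[r_1^2, r_2^2]$ to be chosen; an upper bound on $\theta$ then yields the desired lower bound on $u$.

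At a spatial maximum of $\theta$, either $X$ is parallel to $\nu$, in which case $u = |X| \ge r_1$ and the bound is immediate, or Lemma \ref{l3.4} applies and gives an expression for $\cL\theta$. I would split this expression into three groups. The term $-\beta (u\Psi'Q/F) F^{ij}(h^2)_{ij}$ is manifestly non-positive. A second group, consisting of $(1-2\zeta'|X|^2)(\Psi(1)-\Psi(Q))$ and the terms built from $d_X\phi(u\nu)/\phi$ and $d_\nu\phi(e_1)/\phi$, is uniformly bounded thanks to Lemma \ref{l4.3}, the $C^0$-bound on $\rho$ and the $C^1$-control of $\phi$; with the sign of $(Q-1)|_{M_0}$ preserved along the flow, the first of these may moreover be arranged to be non-positive by calibrating $\zeta$.

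The genuinely negative contribution needed to close the argument as $u$ becomes small is extracted case by case. Under (\rmnum{2}) applied to $G^{1/\beta}$ one has $u\varphi'/\varphi \le -\beta\eps$, so $2\zeta'\Psi'Q\la X,e_1\ra^2 (u\varphi'/\varphi)$ is strongly negative, using that $\la X,e_1\ra^2 = |X|^2 - u^2$ stays bounded below by $r_1^2/2$ when $u$ is small. Under (\rmnum{4}) (respectively (\rmnum{5})), one has $\varphi = 1$ and the radial monotonicity of $\rho G^{1/\beta}$ translates into $d_X\phi(X)/\phi + \beta < 0$ (respectively $\le 0$), making $\Psi'Q(d_X\phi(X)/\phi + \beta)$ strictly (respectively weakly) negative; in case (\rmnum{5}) the hypothesis $\phi = \phi(X)$ moreover removes the $d_\nu\phi$-contribution. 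A concrete choice $\zeta(s) = As$ with small $A > 0$ gives $(\zeta')^2 + \zeta'' = A^2 > 0$, so that $-4\beta(u\Psi'Q/F)((\zeta')^2+\zeta'')\la X,e_1\ra^2 F^{11}$ is non-positive, while the combined trace term $-\beta(u\Psi'Q/F)\zeta'(2F^{ii}-2uF)$ is controlled using the homogeneity identity $\sum F^{ii}\kappa_i = F$ together with the diagonalization $h_{11} = 2u\zeta'$, $h_{1i} = 0$ ($i\ge 2$) that holds at the critical point.

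The principal difficulty is the bookkeeping: in each of the admissible cases (\rmnum{2}), (\rmnum{4}), (\rmnum{5}) of Assumption \ref{a1.4} one must simultaneously choose $\zeta$ so that the $\zeta'$-coefficients of the bounded terms are small compared with the negativity harvested from the structural hypothesis on $G^{1/\beta}$, while $(\zeta')^2 + \zeta''$ keeps the sign required to neutralize the last term of Lemma \ref{l3.4}. Once this balance is secured, the maximum principle yields $\max_{M_t}\theta \le \max_{M_0}\theta$, and hence the claimed bounds $u \ge C_3$ and $|D\rho| \le C_2$, with constants depending only on the initial data, $\Vert\phi\Vert_{C^1}$, $\Vert\varphi\Vert_{C^1}$ and $\beta$.
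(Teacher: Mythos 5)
Your setup is the paper's: the same test function $\theta=-\log u+\zeta(|X|^2)$, the same reduction of the gradient bound to a positive lower bound for $u$, and the same use of Lemma \ref{l3.4} with a case-by-case choice of $\zeta$. The gap is in the quantitative choice of $\zeta$, and it is fatal in cases (\rmnum{2}) and (\rmnum{5}). In case (\rmnum{2}) the only structural negativity available is $2\zeta'\Psi'Q\la X,e_1\ra^2\frac{u\varphi'}{\varphi}\le-2\beta\eps\,\zeta'\Psi'Q\la X,e_1\ra^2$, which is itself proportional to $\zeta'$; with your choice $\zeta(s)=As$, $A$ small, it degenerates together with the error terms, and nothing is left to absorb the order-one term $\Psi'Q\bigl(\frac{d_X\phi(X)}{\phi}+\beta\bigr)$, which has no sign under (\rmnum{2}). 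The paper instead takes $\zeta(t)=-\a/t$ with $\a$ large, so the harvested negativity is of size $\eps\a$ and dominates all bounded terms; moreover $1-2\zeta'|X|^2\le0$ combined with the choice $(Q-1)|_{M_0}\le0$ makes the $\Psi(1)-\Psi(Q)$ term non-positive (note that under (\rmnum{2}) alone Lemma \ref{l4.3} only preserves the sign of $Q-1$ and does not give two-sided bounds on $Q$, so your claim that this group is "uniformly bounded thanks to Lemma \ref{l4.3}" is not justified there), while $(\zeta')^2+\zeta''>0$ keeps the $F^{11}$ term non-positive and the remaining dangerous terms all carry a factor of $u$ and are absorbed once $u$ is small.

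Case (\rmnum{5}) fails for a related reason: as you yourself note, the radial hypothesis there is only $\frac{\p}{\p\rho}(\rho\phi)\le0$, so $\Psi'Q\bigl(\frac{d_X\phi(X)}{\phi}+\beta\bigr)$ is merely weakly non-positive and cannot absorb the genuinely positive contributions such as $-u\Psi'Q\frac{d_X\phi(\nu)}{\phi}$ and $2\beta u^2\zeta'\Psi'Q$. The paper manufactures the missing strict negativity by again taking $\zeta'\sim\a$ large and combining the trace term $-2\beta u\zeta'\frac{\Psi'Q}{F}\sum_iF^{ii}$ with $-\beta\frac{u\Psi'Q}{F}F^{ii}h_{ii}^2$ via an AM--GM inequality of the form $\a F^{ii}+F^{ii}h_{ii}^2\ge c\sqrt{\a}\,F$, which yields a term of size $-C\sqrt{\a}\,u$ beating $C\a u^2+Cu$ for $u$ small. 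With $\zeta'=A$ small this mechanism is unavailable and your sketch names no substitute, so the maximum-principle argument does not close in (\rmnum{2}) or (\rmnum{5}). Case (\rmnum{4}) is essentially fine (the paper even takes $\zeta$ constant there), provided you note that compactness upgrades the strict inequality in (\rmnum{4}) to the uniform bound $\frac{d_X\phi(X)}{\phi}+\beta\le-\eps$, and that the sign of $(Q-1)|_{M_0}$ must be chosen compatibly with the sign of $1-2\zeta'|X|^2$.
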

\begin{proof}
We only need to obtain a positive lower bound of $u$. Recall Lemma \ref{l3.4}. Assume $X_0$ is the maximum value point of $\theta$. If $X$ is parallel to the normal direction of $X$ at $X_0$, $u(X_0)=\la X_0,\nu\ra=\rho(X_0)$. Then we have the lower bound of $u$ by the lower bound of $\rho$. Thus it suffice to derive the lower bound of $u$ at the case that $X$ is not parallel to the normal direction of $X$ at $X_0$. We choose the same coordinate system as in Lemma \ref{l3.4}. Then we have
\begin{equation}\label{4.4}
\begin{split}
\cL\theta=&(1-2\zeta'|X|^2)\(\Psi(1)-\Psi(Q)\)+\Psi'Q\(\frac{d_X\phi(X)}{\phi}+\beta\)-\Psi'Q\frac{d_X\phi(u\nu)}{\phi}\\&+2u\la X,e_1\ra\zeta'\Psi'Q\frac{d_\nu \phi(e_1)}{\phi}
+2\zeta'\Psi'Q\la X,e_1\ra^2\frac{u\varphi'}{\varphi}-\beta\frac{u\Psi'Q}{F}F^{ij}(h^2)_{ij}\\&-\beta\frac{u\Psi'Q }{F}\zeta'(2F^{ii}-2uF)-4\beta\frac{u\Psi'Q }{F}((\zeta')^2+\zeta'')\la X,e_1\ra^2 F^{11}.
\end{split}
\end{equation}
We divide this proof into three cases.

\noindent(1) The case that $G^\frac{1}{\beta}$ satisfies (\rmnum{2}) of Assumption \ref{a1.4}. Choose $$\zeta(t)=-\frac{\a}{t}$$
for sufficiently large positive constant $\a$.
We choose $(Q-1)|_{M_0}\le0$ at this case. Then $\Psi(1)-\Psi(Q)\ge0$ by Lemma \ref{l4.3} and the monotonicity of $\Psi$. We let $1-2\zeta'|X|^2\le0$ for sufficiently large positive constant $\a$. Using (\rmnum{2}) of Assumption \ref{a1.4}, we get
\begin{equation*}
	\begin{split}
\frac{\cL\theta}{\Psi'Q}\le&\frac{d_X\phi(X)}{\phi}+\beta-\eps\a+u\(-\frac{d_X\phi(\nu)}{\phi}+C_3\a\frac{d_\nu \phi(e_1)}{\phi}+C_4\a u\)\\
&-\dfrac{u}{F}(C_5\a^2-C_6\a)F^{11}.
	\end{split}
\end{equation*}
By the assumption on the $C^0$ bound, we have $\frac{d_X\phi(X)}{\phi},\frac{d_X\phi(\nu)}{\phi},\frac{d_\nu \phi(e_1)}{\phi}\le C$. For sufficiently large $\a$ and small $u$, we get
\begin{equation*}
\cL\theta\le0.
\end{equation*}
That is, at $X_0$, $u$ has lower bound.

\noindent(2) The case that $G^\frac{1}{\beta}$ satisfies (\rmnum{4}) of Assumption \ref{a1.4}. By the assumption on the $C^0$ bound, (\ref{4.3}) and (\rmnum{4}) of Assumption \ref{a1.4}, we get
\begin{equation*}
	\frac{d_XG(X)}{G}+\beta=\frac{d_X\phi(X)}{\phi}+\beta\le-\eps.
\end{equation*}
At this case, through different choices of $\zeta$, we can derive the lower bound of $u$ by choosing $(Q-1)|_{M_0}\le0$ or $(Q-1)|_{M_0}\ge0$. We only give a proof for one of the cases. We choose $(Q-1)|_{M_0}\ge0$ and $\zeta=1$. Then
rewriting (\ref{4.4}),
\begin{align*}
\frac{\cL\theta}{\Psi'Q}\le-\eps-u\frac{d_X\phi(\nu)}{\phi}\le0
\end{align*}
if $u$ is sufficiently small. Then we get the lower bound of $u$.

\noindent(3) The case that $G^\frac{1}{\beta}$ satisfies (\rmnum{5}) of Assumption \ref{a1.4}. 
We choose $$\zeta(t)=-\frac{\a}{t}$$
for sufficiently large positive constant $\a$ and choose $(Q-1)|_{M_0}\le0$ at this case. By (\ref{4.3}) we get
\begin{equation*}
	\begin{split}
\frac{\cL\theta}{\Psi'Q}&\le u\(-\frac{d_X\phi(\nu)}{\phi}-C_7\a\frac{\sum_{i}F^{ii}}{F}-\beta\frac{F^{ii}h_{ii}^2}{F}+C_4\a u\)-\dfrac{u}{F}(C_5\a^2-C_6\a)F^{11}\\
&\le u\(-\frac{d_X\phi(\nu)}{\phi}-C_8\sqrt{\a}+C_4\a u\)-\dfrac{u}{F}(C_5\a^2-C_6\a)F^{11}\le0,
	\end{split}
\end{equation*}
where we used $C_7\a F^{ii}+\beta F^{ii}h_{ii}^2\ge C_8\sqrt{\a} F$. Then we get the lower bound of $u$.
\end{proof}
We would like to get the upper bound of $|D\gamma|$ to show that $u$ has a lower bound if $G$ satisfies (\rmnum{3}) of Assumption \ref{a1.4}.

\begin{lemma}\label{l4.5}
Let $\rho(x,t)$, $t\in[0,T)$, be a smooth, star-shaped solution to (\ref{2.17}). If $F$ satisfies Assumption \ref{a1.3}, $G^\frac{1}{\beta}$ satisfies (\rmnum{3}) of Assumption \ref{a1.4} and $\rho$ has positive lower and upper bound, then there are positive constants $C_9$, $C_{10}$ depending on the initial hypersurface, $\Vert G\Vert_{C^0}$, $\Vert G\Vert_{C^1}$ and $\beta$,  such that
$$\vert D \rho\vert\leq C_9\quad \text{  and } \quad u\ge C_{10}.$$
\end{lemma}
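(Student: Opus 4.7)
Since $\omega=\sqrt{1+|D\gamma|^{2}}=\rho/u$, $|D\rho|=\rho|D\gamma|$, and Lemma~\ref{l4.2} already gives $\rho\ge r_{1}$, any uniform upper bound $\omega\le C$ on $\mathbb{S}^{n}$ simultaneously yields both $|D\rho|\le C_{9}$ and $u\ge C_{10}=r_{1}/C$. I therefore reduce the lemma to bounding $\omega$ from above, i.e.\ to bounding $|D\gamma|$, as the informal remark preceding the lemma suggests.

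The plan is to mimic the proof of Lemma~\ref{l4.4}: apply the parabolic maximum principle to the auxiliary function $\theta=-\log u+\zeta(|X|^{2})$ on $M_{t}$ and use the identity of Lemma~\ref{l3.4} for $\cL\theta$. The new wrinkle compared with cases~(\rmnum{2}), (\rmnum{4}), (\rmnum{5}) is that hypothesis~(\rmnum{3}) applied to $G^{1/\beta}$, after expanding $\partial_{\rho}(\rho G^{1/\beta})$ via $\partial_{\rho}u=u/\rho$ along radial rays, becomes the \emph{combined} bound
\[
\frac{d_{X}\phi(X)}{\phi}+\frac{u\varphi'(u)}{\varphi}\ \le\ -\beta(1+\eps),
\]
rather than a bound on either summand alone. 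In Lemma~\ref{l3.4} these two summands are paired with coefficients $\Psi'Q$ and $2\zeta'\langle X,e_{1}\rangle^{2}\Psi'Q=2\zeta'|X^{T}|^{2}\Psi'Q$ respectively, with $|X^{T}|^{2}=\rho^{2}-u^{2}$ at the maximum of $\theta$. To invoke~(\rmnum{3}) cleanly I propose the choice $\zeta(r)=\tfrac{1}{2}\log r$, which has $2\zeta'|X|^{2}\equiv 1$. This simultaneously (a) annihilates the forcing term $(1-2\zeta'|X|^{2})(\Psi(1)-\Psi(Q))$, which is convenient since Lemma~\ref{l4.5} assumes no sign on $Q-1$, and (b) makes $2\zeta'|X^{T}|^{2}=1-u^{2}/\rho^{2}=1+O(u^{2})$, so the coefficient mismatch in~(\rmnum{3}) is only a lower-order error. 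Substitution into Lemma~\ref{l3.4} and use of~(\rmnum{3}) then reduces the zero-order part of $\cL\theta/(\Psi'Q)$ at the spatial maximum to $-\beta\eps+O(u)$, strictly negative for $u$ small enough; this contradicts maximality and yields the desired lower bound on $u$.

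The main obstacle I anticipate is the curvature part of $\cL\theta$ under this choice of $\zeta$: since $(\zeta')^{2}+\zeta''=-(4|X|^{4})^{-1}<0$, the $F^{11}$ term of Lemma~\ref{l3.4} contributes positively with magnitude $\beta u|X^{T}|^{2}\rho^{-4}F^{11}\Psi'Q/F$ and must be absorbed. At the maximum of $\theta$ one has $h_{11}=2u\zeta'=u/\rho^{2}$, hence $F^{11}h_{11}^{2}=u^{2}\rho^{-4}F^{11}$, so the genuinely negative $-\beta u\Psi'QF^{-1}F^{ij}(h^{2})_{ij}$ should dominate, provided one argues that $\kappa_{1}=h_{11}$ is in fact the largest eigenvalue (or at least that $F^{11}\ge c>0$) in the regime $u\to 0$. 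If this direct balancing turns out to be too delicate, a safer variant is the perturbation $\zeta(r)=\tfrac{1}{2}\log r-\alpha/r$ for large $\alpha$: this reintroduces a $(1-2\zeta'|X|^{2})(\Psi(1)-\Psi(Q))$ term that must be signed by propagating a sign of $Q-1$ from $M_{0}$ via Lemma~\ref{l4.3} (available in the context of Theorem~\ref{t3.1}), but in exchange adds a strong negative zero-order piece $-\alpha\rho^{-2}(\Psi(1)-\Psi(Q))$ together with a strong negative curvature contribution through the extra $\alpha/\rho^{4}$ in $\zeta'$, both of which comfortably overwhelm the $F^{11}$-term error. Either route closes the estimate and delivers the advertised constants $C_{9}$, $C_{10}$.
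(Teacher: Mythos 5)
Your overall strategy is sound and is, in substance, the paper's own: with $\zeta(r)=\tfrac12\log r$ your test function is $\theta=-\log u+\log\rho=\log\omega$, so you are bounding exactly the quantity $\omega=\sqrt{1+|D\gamma|^2}$ that the paper bounds, and hypothesis (\rmnum{3}) enters in the same combined form $\frac{d_X\phi(X)}{\phi}+\frac{u\varphi_u}{\varphi}+\beta\le-\eps'$ (the paper computes instead with $O=\tfrac12|D\gamma|^2$ as a graph over $\mathbb{S}^n$, using the Ricci identity on the sphere). The genuine gap is in the step you flag yourself: the positive term $-4\beta\frac{u\Psi'Q}{F}((\zeta')^2+\zeta'')\la X,e_1\ra^2F^{11}=\beta\frac{u\Psi'Q}{F}\frac{\rho^2-u^2}{\rho^4}F^{11}$ cannot be absorbed by $-\beta\frac{u\Psi'Q}{F}F^{ij}(h^2)_{ij}$. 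At the maximum point $h_{11}=2u\zeta'=u/\rho^2$, so the only piece of $F^{ij}(h^2)_{ij}$ comparable with $F^{11}$ is $F^{11}h_{11}^2=u^2\rho^{-4}F^{11}$, smaller than the bad term by the factor $u^2/(\rho^2-u^2)$ precisely in the regime $u\to0$ that matters; the other principal curvatures are not under control, and neither ``$\kappa_1$ is the largest eigenvalue'' nor ``$F^{11}\ge c>0$'' produces the needed negative term of size $u\rho^{-2}F^{11}/F$. The term that does the job is the one you did not use: $-\beta\frac{u\Psi'Q}{F}\zeta'\cdot 2\sum_iF^{ii}=-\beta\frac{u\Psi'Q}{\rho^2F}\sum_iF^{ii}$, which dominates the bad term because $F^{11}\le\sum_iF^{ii}$ and $\frac{\rho^2-u^2}{\rho^4}\le\frac1{\rho^2}$, leaving only the harmless $+\beta u^2\Psi'Q/\rho^2$ from the $-2uF\zeta'$ part. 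With this correction your first route closes (it is then the exact analogue of the paper's inequality $F^{ij}(\gamma_i\gamma_j-|D\gamma|^2\delta_{ij})\le(\max_iF^{ii}-\sum_iF^{ii})|D\gamma|^2\le0$); remember also to treat separately the case where $X$ is parallel to $\nu$ at the maximum, as in Lemma \ref{l4.4}.

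Your fallback $\zeta(r)=\tfrac12\log r-\alpha/r$ is not a proof of Lemma \ref{l4.5} as stated. First, it needs a sign of $Q-1$ on $M_0$, which is a hypothesis of Theorem \ref{t3.1} but not of this lemma, whereas the repaired first route (like the paper's proof) needs no such sign because, once the $(1-2\zeta'|X|^2)$-term vanishes, every remaining term carries the factor $\Psi'Q>0$ and no bound on $Q$ is required. Second, once $2\zeta'|X|^2\ne1$, condition (\rmnum{3}) no longer signs the zero-order part: it controls only the matched sum $\frac{d_X\phi(X)}{\phi}+\frac{u\varphi_u}{\varphi}+\beta$, while your choice multiplies $\frac{u\varphi_u}{\varphi}$ by $(1+2\alpha\rho^{-2})(1-u^2\rho^{-2})$, leaving an unmatched piece of size $\alpha\,|\frac{u\varphi_u}{\varphi}|$ with no definite sign (under (\rmnum{3}) alone $\frac{u\varphi_u}{\varphi}$ may be positive). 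Neither of the quantities you propose to beat it with is quantitatively available: $\Psi(1)-\Psi(Q)\ge0$ has no positive lower bound, and $\sum_iF^{ii}/F$ has none either, since the lemma assumes no upper bound on $F$. So the ``safer variant'' is actually the weaker one; the first route, with the $\sum_iF^{ii}$ absorption, is the proof.
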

\begin{proof}
Consider the auxiliary function $O=\frac{1}{2}\vert D\g\vert^2$. At the point where $O$ attains its spatial maximum, we have
\begin{gather*}
D\omega=0,\\
0=D_iO=\sum_{l}\g_{li}\g_{l},\\
0\geq D_{ij}^2O=\sum_{l}\g_{li}\g_{lj}+\sum_{l}\g_{l}\g_{lij}.
\end{gather*}
By (\ref{2.14}) and (\ref{2.18}), we deduce
\begin{equation}\label{4.5}
	\begin{split}
\p_t\gamma=&\Psi(G(X,\nu) F^{-\beta})-\Psi(1)\\
=&\Psi \(G(\rho\frac{\p}{\p\rho},\frac{1}{\om}(\frac{\p}{\p\rho}-\phi^{-2}\rho_i\frac{\p}{\p x_i}))F^{-\beta}(\frac{1}{\rho\omega}(\delta_{ij}-(e^{ik}-\frac{\gamma_i\gamma_k}{\omega^2})\gamma_{kj}))\)-\Psi(1)\\
=&\Psi \(\rho^\beta \om^\beta \phi(\rho\frac{\p}{\p\rho},\frac{1}{\om}(\frac{\p}{\p\rho}-\rho^{-2}\rho_i\frac{\p}{\p x_i}))\varphi(\frac{\rho}{\om}) F^{-\beta}\)-\Psi(1).
	\end{split}
\end{equation}
We remark that here $F^{-\beta}=F^{-\beta}([\delta_{ij}-(e^{ik}-\frac{\gamma_i\gamma_k}{\omega^2})\gamma_{kj}])$ and $$F^{ij}=F^{ij}([\delta_{ij}-(e^{ik}-\frac{\gamma_i\gamma_k}{\omega^2})\gamma_{kj}]).$$
We denote $\p_0=\frac{\p}{\p\rho}$. By $ds^2=d\rho^2+\rho^2e$, we can derive
\begin{align*}
\bar\Gamma^k_{i0}\p_k=\rho^{-1}\p_i, \quad\bar\Gamma^0_{i0}=0,\quad\bar\Gamma^k_{il}\p_k=\g_i\p_l+\g_l\p_i-\delta_{il}\g_k\p_k.
\end{align*}
With the above preparations, we can get the evolution equation of $O$.
\begin{align}\label{4.6}
\p_tO_{\max}=&\sum\g^l\g_{tl}\notag\\
=&Q\Psi'\(\beta|D\g|^2+\frac{d_X\phi(X|D\g|^2+\g^l\p_l)}{\phi}+\frac{d_\nu\phi(\rho^{-1}\g^l\p_l+|D\g|^2\p_0)}{\phi\om}\notag\\
&\frac{u\varphi_u}{\varphi}|D\g|^2+\frac{\beta F^{ij}}{F}\g^l\g_{kjl}(e^{ik}-\frac{\gamma_i\gamma_k}{\omega^2})\)\notag\\
=&Q\Psi'\((\frac{d_X\phi(X)}{\phi}+\frac{u\varphi_u}{\varphi}+\beta)|D\g|^2+\frac{d_X\phi(\p_l)}{\phi}\g^l+\frac{d_\nu\phi(\rho^{-1}\p_l)}{\phi}\frac{\g^l}{\om}\notag\\
&+\frac{d_\nu\phi(\p_0)}{\phi}\frac{|D\g|^2}{\om}+\frac{\beta F^{ij}}{F}\g^l\g_{kjl}(e^{ik}-\frac{\gamma_i\gamma_k}{\omega^2})\),
\end{align}
where we have used $\sum_{l}\g_{li}\g_{l}=0$. By the Ricci identity,$$D_l\g_{ij}=D_j\g_{li}+\e_{il}\g_j-\e_{ij}\g_l,$$
we get
\begin{align}\label{4.7}
F^{ij}\g^l\g_{kjl}(e^{ik}-\frac{\gamma_i\gamma_k}{\omega^2})=&F^{ij}\g^l(\g_{lkj}+\g_j\e_{lk}-\g_l\e_{kj})(e^{ik}-\frac{\gamma_i\gamma_k}{\omega^2})\notag\\
\leq&F^{ij}(-\g_{lk}\g_{lj}+\g_l\g_je_{lk}-\vert D\g\vert^2e_{kj})(e^{ik}-\frac{\gamma_i\gamma_k}{\omega^2})\notag\\
\leq&F^{ij}(-\g_{li}\g_{lj}+\g_i\g_j-\vert D\g\vert^2\delta_{ij}).
\end{align}
By a direct calculation, that $G^\frac{1}{\beta}$ satisfies (\rmnum{3}) of Assumption \ref{a1.4} is equivalent to
$$\frac{d_X\phi(X)}{\phi}+\frac{u\varphi_u}{\varphi}+\beta\le-\eps'.$$
According to (\ref{4.6}) and (\ref{4.7}), we can derive
\begin{align}\label{4.8}
\p_tO_{\max}\le&Q\Psi'\(-\eps'|D\g|^2+\frac{d_X\phi(\p_l)}{\phi}\g^l+\frac{d_\nu\phi(\rho^{-1}\p_l)}{\phi}\frac{\g^l}{\om}\notag\\
&+\frac{d_\nu\phi(\p_0)}{\phi}\frac{|D\g|^2}{\om}+\frac{\beta F^{ij}}{F}(-\g_{li}\g_{lj}+\g_i\g_j-\vert D\g\vert^2\delta_{ij})\).
\end{align}
In terms of the positive definite of the symmetric matrix $[F^{ij}]$, we have $F^{ij}\g_{li}\g_{lj}\geq0$ and $F^{ij}\g_i\g_j\leq\max_iF^{ii}\vert D\g\vert^2$. If $|D\g|^2$ is sufficiently large, by the bound of $\rho$ and $\phi$, we have
$$\p_tO_{\max}\leq0.$$
Then we have $\vert D\g(\cdot,t)\vert\leq C_9$ for a positive constant $C_9$.
\end{proof}
If $F=\sigma_{k}^\frac{1}{k}$, we can derive the $C^1$-estimate by weaker conditions.
\begin{lemma}\label{l4.6}
Let $X(\cdot,t)$ be the solution to the flow (\ref{1.8}) which encloses the origin for $t\in[0,T)$. If $F=\sigma_{k}^\frac{1}{k}$, $G^\frac{1}{\beta}$ satisfies one of Assumption \ref{a1.4}, $(Q-1)|_{M_0}$ has a sign and $\rho$ has positive lower and upper bound,
then there are positive constants $C_{11}$, $C_{12}$ depending on the initial hypersurface,  $\Vert G\Vert_{C^0}$, $\Vert G\Vert_{C^1}$ and $\beta$,  such that
	$$\vert D \rho\vert\leq C_{11}\quad \text{  and } \quad u\ge C_{12}.$$
\end{lemma}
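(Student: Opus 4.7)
My plan is to handle each of the six conditions of Assumption \ref{a1.4} separately. For conditions (\rmnum{2}), (\rmnum{4}) and (\rmnum{5}), the arguments in the proof of Lemma \ref{l4.4} apply verbatim, since they required only Assumption \ref{a1.3} on $F$, which is satisfied by $F=\sigma_k^{\frac{1}{k}}$. Condition (\rmnum{3}) is covered analogously by Lemma \ref{l4.5}. The new content of Lemma \ref{l4.6} is therefore the treatment of the two remaining conditions (\rmnum{1}) and (\rmnum{6}), whose hypotheses are too weak for the general-$F$ arguments and become tractable only by exploiting the structure of $F=\sigma_k^{\frac{1}{k}}$.

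The $\sigma_k^{\frac{1}{k}}$-specific tools I rely on are Euler's identity $\sum_i F^{ii}h_{ii}=F$ (from degree-one homogeneity) together with the uniform lower bound
\begin{equation*}
	\sum_i F^{ii}=\frac{n-k+1}{k}\,\sigma_k^{\frac{1}{k}-1}\sigma_{k-1}\ge\frac{n-k+1}{k},
\end{equation*}
which follows from the Newton-MacLaurin inequality $\sigma_{k-1}^{\frac{1}{k-1}}\ge\sigma_k^{\frac{1}{k}}$ in Lemma \ref{l2.5}. Since Lemma \ref{l4.3} bounds $F$ from above, the ratio $\sum_i F^{ii}/F$ admits a uniform positive lower bound $c_0$. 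This turns the term $-2\beta\frac{u\Psi'Q}{F}\zeta'\sum_i F^{ii}$ appearing in the formula of Lemma \ref{l3.4} into a genuinely negative quantity of order $-c_0\beta u\Psi'Q\zeta'$ whenever $\zeta'>0$, providing the extra room the general-$F$ arguments lack.

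For case (\rmnum{1}) the hypothesis on $G^{\frac{1}{\beta}}$ translates into $\frac{d_XG(X)}{G}+\beta\le 0$ (nonstrict), and I would run the Lemma \ref{l4.4}(1) argument with $\theta=-\log u+\zeta(|X|^2)$, $\zeta(t)=-\alpha/t$ and $(Q-1)|_{M_0}\le 0$: the new curvature negativity $-c_0\beta u\Psi'Q\zeta'$ compensates for the loss of strictness and forces $\cL\theta<0$ at the putative minimum, contradicting the maximum-principle assumption and yielding a positive lower bound on $u$. For case (\rmnum{6}), we split by the sign of $u\varphi_u/\varphi$: the subcase $u\varphi_u/\varphi\le-\varepsilon$ is exactly condition (\rmnum{2}), handled by Lemma \ref{l4.4}(1).

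The opposite subcase $u\varphi_u/\varphi\ge\varepsilon$ is the main obstacle of the proof: here the $\varphi$-term $2\zeta'\Psi'Q\la X,e_1\ra^2 u\varphi'/\varphi$ in $\cL\theta$ is strictly positive of order $\alpha\varepsilon$ (essentially independent of $u$), while the curvature negativity $-c_0\beta u\Psi'Q\zeta'$ is only of order $\alpha u$ and cannot dominate as $u\to 0$. To close this case, I would modify the auxiliary function by appending a term of the form $-\mu\log\varphi(u)$ for a small $\mu>0$ tuned against $\alpha$ and $c_0$, so that the additional evolution produces a compensating negative factor of order $\mu\varepsilon$ that is independent of $u$ and can absorb the bad $\varphi$-term; the cross-terms generated by $-\mu\log\varphi$ are then controlled using the Euler identity $\sum_iF^{ii}h_{ii}=F$ and the $\sigma_k^{\frac{1}{k}}$-lower bound on $\sum_iF^{ii}$. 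Making this three-way balance between $\mu$, $\alpha$ and $c_0$ precise, and checking it against the initial sign choice of $(Q-1)|_{M_0}$, is the technical heart of the argument.
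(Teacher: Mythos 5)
Your reduction is the same as the paper's: conditions (\rmnum{2})--(\rmnum{5}) are indeed dispatched by Lemmas \ref{l4.4} and \ref{l4.5}, and the new content is (\rmnum{1}) and (\rmnum{6}). But the mechanism you propose for both new cases does not close. For case (\rmnum{1}): with $\zeta(t)=-\a/t$ (so $\zeta'>0$) your only strictly negative contribution is $-2\beta\frac{u\Psi'Q}{F}\zeta'\sum_iF^{ii}\sim -c_0\beta\a u$, which has exactly the same order $\a u$ as the uncontrolled cross term $2u\la X,e_1\ra\zeta'\Psi'Q\,\frac{d_\nu\phi(e_1)}{\phi}$ in Lemma \ref{l3.4}; neither taking $\a$ large nor $u$ small changes their ratio, and $c_0$ is a fixed structural constant while $\Vert d_\nu\phi\Vert/\phi$ is arbitrary data, so $\cL\theta<0$ cannot be forced. (In Lemma \ref{l4.4}, case (1), the analogous comparison works only because the good term $-\eps\a$ carries no factor of $u$.) The paper's proof takes the opposite sign: $\zeta(t)=\a/t$, $(Q-1)|_{M_0}\ge0$, so that $h_{11}=2u\zeta'\le0$; then Lemma \ref{l4.3} (whose hypothesis $\frac{\p}{\p\rho}(\rho G^{1/\beta})\le0$ is precisely condition (\rmnum{1})) bounds $\sigma_k$, whence $\sigma_k^{11}=\sigma_{k-1,1}\ge\sigma_{k-1}\ge c>0$, and the dominant good term is the $\a^2$-term $-4\beta\frac{u\Psi'Q}{F}\big((\zeta')^2+\zeta''\big)\la X,e_1\ra^2F^{11}$, which swamps every order-$\a u$ term. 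Your sign choice destroys exactly this term: with $h_{11}=2u\zeta'\ge0$ there is no lower bound on $\sigma_{k-1,1}$ (only $\k_1\sigma_{k-1,1}\ge\frac{k}{n}\sigma_k$), so the $\a^2$-term cannot be quantified, and nothing else is available.

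For case (\rmnum{6}) with $u\varphi_u/\varphi\ge\eps$, the difficulty you identify is an artifact of keeping $\zeta'>0$; the paper again flips the sign ($\zeta=\a/t$, $(Q-1)|_{M_0}\ge0$), so the $\varphi$-term $2\zeta'\Psi'Q\la X,e_1\ra^2\frac{u\varphi'}{\varphi}$ becomes the dominant \emph{negative} term of size $\eps\a$ with no factor of $u$, exactly as in Lemma \ref{l4.4}; the only genuinely new term is the now-positive $-2\beta\frac{u\Psi'Q}{F}\zeta'\sum_i F^{ii}\sim +C\a u\,\frac{\sigma_{k-1}}{\sigma_k}$, and it is absorbed by the $\a^2$-term via $\sigma_k^{11}\ge\sigma_{k-1}$ (valid because $h_{11}\le0$) -- this is where $F=\sigma_k^{1/k}$ is really used. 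Moreover, your appeal to Lemma \ref{l4.3} for an upper bound on $F$ is not justified under (\rmnum{6}) alone (that bound requires $\frac{\p}{\p\rho}(\rho G^{1/\beta})\le0$, and the paper explicitly notes that $\sigma_k$ is \emph{not} bounded in this case), so the constant $c_0$ on which your scheme rests is unavailable here; and the proposed extra term $-\mu\log\varphi(u)$ in the test function is left as an unverified sketch by your own account. In short, the reduction and the idea of exploiting $\sigma_k$-structure are right, but the two new cases need the paper's reversed choice of $\zeta$ (and of the sign of $Q-1$ on $M_0$) together with the $\a^2$-term and $\sigma_k^{11}\ge\sigma_{k-1}$; as written, your argument has a genuine gap in both cases.
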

\begin{proof}
It suffice to derive a positive lower bound of $u$. Similar to Lemma \ref{l4.4}, by (\ref{4.4}), we have
\begin{equation}\label{4.9}
	\begin{split}
		\cL\theta=&(1-2\zeta'|X|^2)\(\Psi(1)-\Psi(Q)\)+\Psi'Q\(\frac{d_X\phi(X)}{\phi}+\beta\)-\Psi'Q\frac{d_X\phi(u\nu)}{\phi}\\
		&+2u\la X,e_1\ra\zeta'\Psi'Q\frac{d_\nu \phi(e_1)}{\phi}
		+2\zeta'\Psi'Q\la X,e_1\ra^2\frac{u\varphi'}{\varphi}-\frac{\beta u\Psi'Q}{k\sigma_{k}}\sigma_{k}^{ij}(h^2)_{ij}\\&-\frac{2\beta u\Psi'Q}{k\sigma_{k}}\zeta'\sigma_{k}^{ii}+2\beta u^2\Psi'Q-\frac{4\beta u\Psi'Q}{k\sigma_{k}}((\zeta')^2+\zeta'')\la X,e_1\ra^2\sigma_{k}^{11}.
	\end{split}
\end{equation}
If $G^\frac{1}{\beta}$ satisfies (\rmnum{2})-(\rmnum{5}) of Assumption \ref{a1.4}, we have been proved in the above lemmas. Thus
we divide this proof into two cases.

\noindent(1) The case that $G^\frac{1}{\beta}$ satisfies (\rmnum{1}) of Assumption \ref{a1.4}.  Choose $$\zeta(t)=\frac{\a}{t}$$
for sufficiently large positive constant $\a$.
We choose $(Q-1)|_{M_0}\ge0$ at this case. Then $\Psi(1)-\Psi(Q)\le0$ by Lemma \ref{l4.3} and the monotonicity of $\Psi$. Recall $h_{11}=2u\zeta'\le0$ by the choice of function $\zeta$. $\phi(X,\nu)$ has uniform bounds since $\phi(X,\nu)$ is defined in a compact set by $C^0$ estimates.
Note that $G^\frac{1}{\beta}$ satisfies (\rmnum{1}) of Assumption \ref{a1.4}. By Lemma \ref{l4.3}, we derive the bounds of $Q$,
i.e. we derive the bounds of $\sigma_{k}$. Thus the monotonicity of $\sigma_{k-1}$ and  the Newton-Maclaurin inequality (Lemma \ref{l2.5}) imply
$$\sigma_k^{11}=\sigma_{k-1,i}\ge\sigma_{k-1}\ge\frac{k}{(n-k+1)(k-1)}(C_n^k)^\frac{1}{k}\sigma_{k}^\frac{k-1}{k}\ge C.$$
Therefore by the above inequality, Proposition \ref{p2.6}, (\rmnum{1}) of Assumption \ref{a1.4} and (\ref{4.9}), we have
\begin{align*}
\cL\theta\le&-\Psi'Q\frac{d_X\phi(u\nu)}{\phi}-2u\la X,e_1\ra\frac{\a}{\rho^4}\Psi'Q\frac{d_\nu \phi(e_1)}{\phi}
+\frac{2\beta u\Psi'Q}{k\sigma_{k}}\frac{\a}{\rho^4}(n-k+1)\sigma_{k-1}\\&+2\beta u^2\Psi'Q-\frac{4\beta u\Psi'Q}{k\sigma_{k}}(\frac{\a^2}{\rho^8}+\frac{2\a}{\rho^6})\la X,e_1\ra^2\sigma_{k}^{11}\\
\le&u\Psi'Q\(C_{13}+C_{14}\a-C_{15}\a^2\).
\end{align*}
Thus, for sufficiently large $\a$,
at $X_0$, $u$ has lower bound.

\noindent(2) The case that $G^\frac{1}{\beta}$ satisfies (\rmnum{6}) of Assumption \ref{a1.4}. We only need to prove the case $\frac{u\varphi_u}{\varphi}\ge\eps$ since $\varphi$ is smooth and the case $\frac{u\varphi_u}{\varphi}\le-\eps$ has been proved in Lemma \ref{l4.4}. Choose $$\zeta(t)=\frac{\a}{t}$$
for sufficiently large positive constant $\a$.
We choose $(Q-1)|_{M_0}\ge0$ at this case. Then $\Psi(1)-\Psi(Q)\le0$ by Lemma \ref{l4.3} and the monotonicity of $\Psi$. Recall $h_{11}=2u\zeta'\le0$ by the choice of function $\zeta$. Thus by the monotonicity of $\sigma_{k}$, we have
$$\sigma_k^{11}\ge\sigma_{k-1}.$$
Note that we don't have the bounds of $\sigma_{k}$ at this case.
Therefore by the above inequality, Proposition \ref{p2.6}, (\rmnum{1}) of Assumption \ref{a1.4} and (\ref{4.9}), we have
\begin{align*}
\cL\theta\le&\Psi'Q\(\frac{d_X\phi(X)}{\phi}+\beta\)-\Psi'Q\frac{d_X\phi(u\nu)}{\phi}-2u\la X,e_1\ra\frac{\a}{\rho^4}\Psi'Q\frac{d_\nu \phi(e_1)}{\phi}
	\\&-2\frac{\a}{\rho^4}\Psi'Q\la X,e_1\ra^2\frac{u\varphi'}{\varphi}+\frac{2\beta u\Psi'Q}{k\sigma_{k}}\frac{\a}{\rho^4}(n-k+1)\sigma_{k-1}+2\beta u^2\Psi'Q\\&-\frac{4\beta u\Psi'Q}{k\sigma_{k}}(\frac{\a^2}{\rho^8}+\frac{2\a}{\rho^6})\la X,e_1\ra^2\sigma_{k-1}\\
\le&\Psi'Q\(C_{16}+(-\eps C_{17}+C_{18}u)\a+C_{19}u+\frac{\sigma_{k-1}}{\sigma_{k}}(-C_{20}\a^2+C_{21}\a)\).
\end{align*}
Thus, for sufficiently large $\a$ and sufficiently small $u$,
at $X_0$, we can derive the lower bound of $u$.
\end{proof}
At the end of this section, we shall derive the bounds of $F$. If $G^\frac{1}{\beta}$ satisfies (\rmnum{1}), or (\rmnum{3})-(\rmnum{5}) of Assumption \ref{a1.4}, we have got the bounds of $F$ by Lemma \ref{l4.3}. 
Here, we give the bound of $F$ in some more general cases.
\begin{lemma}\label{l4.7}
Let $X(\cdot,t)$ be the solution to the flow (\ref{1.9}) which encloses the origin for $t\in[0,T)$. If $F$ satisfies Assumption \ref{a1.3}, $(Q-1)|_{M_0}\le0$ and $\rho,u$ have positive lower and upper bound,
then there is a positive constant $C_{22}$ depending on the initial hypersurface,  $\Vert G\Vert_{C^0}$, $\Vert G\Vert_{C^1}$ and $\beta$,  such that
$$\frac{1}{C_{22}}\leq F\leq C_{22}.$$
\end{lemma}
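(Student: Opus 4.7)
The lower bound on $F$ is essentially free. By the sign-preservation step at the start of Lemma~\ref{l4.3} (applied with $\Psi(s)=s$; that step does not invoke the monotonicity hypothesis $\frac{\p}{\p\rho}(\rho G^{1/\beta})\le0$), the assumption $(Q-1)\vert_{M_0}\le 0$ is propagated forward in time, so $Q\le 1$ for all $t\ge 0$. Therefore $F^\beta\ge G$, and the $C^0$-bounds on $\rho$ and $u$ confine the hypersurface to a fixed compact region of $\mR^{n+1}\times\mS^n$ on which $G=\phi(X,\nu)\varphi(u)$ is a positive continuous function bounded below by a constant depending on $\|G\|_{C^0}$, $\beta$ and those bounds. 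This yields $F\ge 1/C_{22}$.

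Since $G$ is also uniformly bounded above on the same region, the upper bound $F\le C_{22}$ is equivalent to a uniform positive lower bound on $Q=GF^{-\beta}$. The plan is to apply the parabolic maximum principle to the auxiliary function
\[
\Theta=-\log Q+\zeta(|X|^2),
\]
mirroring the structure used in Lemma~\ref{l4.4} but with $Q$ in the role of $u$. At a spatial maximum of $\Theta$, the first-order condition $\nabla_i Q/Q=2\zeta'\la X,e_i\ra$ and $\nabla^2\Theta\le 0$ are inserted into the evolution equation (\ref{4.3}) of $Q$ specialized to $\Psi(s)=s$, $\Psi''=0$. Using (\ref{2.13}) to expand $\nabla^2|X|^2$ in terms of the induced metric and the second fundamental form, the linearized operator $\cL$ (as defined in Lemma~\ref{l3.4}) produces a negative quadratic term proportional to $-\alpha^2 F^{11}/F$ together with $-(\beta u/F)F^{ij}(h^2)_{ij}$, while the drift terms coming from $d_XG$, $d_\nu G$ and $\varphi'$ are $C^1$-controlled uniformly by $\|G\|_{C^1}$ and the bounds on $u,\rho$. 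With $\zeta(t)=\alpha/t$ and $\alpha$ chosen sufficiently large, the dominant $\alpha^2$-term forces $\cL\Theta<0$ at any purported spatial maximum where $Q$ drops below a threshold determined by the same data, which rules out such maxima and so gives $Q\ge c>0$ uniformly in time. Combined with the upper bound on $G$, this yields $F\le C_{22}$.

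The main obstacle is that, in contrast to Lemma~\ref{l4.3}, no sign on $\frac{d_XG(X)}{G}+\beta$ is imposed here. A naive pointwise comparison on $Q$ via (\ref{4.3}) then yields only a time-dependent bound of the form $Q_{\min}(t)\ge Q_{\min}(0)e^{-Mt}$ which deteriorates as $t\to\infty$; the $\zeta(|X|^2)$ correction is precisely what is needed to trade the $C^0$-control on $|X|$ for a time-uniform lower bound on $Q$. Verifying that the $\alpha^2$-terms generated by the gradient substitution genuinely dominate the lower-order drift terms in the unfavourable-sign case, uniformly in the data allowed by the statement, is the crux of the argument.
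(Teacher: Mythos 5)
Your lower bound for $F$ is correct and is exactly the paper's argument: the sign–preservation part of Lemma \ref{l4.3} (which does not use the monotonicity hypothesis) gives $Q\le 1$ for all time, hence $F^{\beta}\ge G\ge c>0$ on the compact region determined by the bounds on $\rho$ and $u$.

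The upper bound, however, has a genuine gap. First, the term $-(\beta u/F)F^{ij}(h^2)_{ij}$ you expect from ``mirroring Lemma \ref{l3.4}'' simply does not occur for the test function $\Theta=-\log Q+\zeta(|X|^2)$: in Lemma \ref{l3.4} that term is produced by the Hessian of the support function $u$ inside $-\log u$ via (\ref{2.13}), whereas the Hessian of $|X|^2$ equals $2(g_{ij}-uh_{ij})$, only linear in $h$, and the evolution (\ref{4.3}) of $Q$ contains no $(h^2)$-term at all. Second, and decisively, (\ref{4.3}) with $\Psi=\mathrm{id}$ contains the term $2\beta\frac{Q}{F}F^{ij}\rho\,h_{il}\nabla^{l}Q\,\nabla_{j}\rho$; at an extremum of $Q$ itself this vanishes (which is what Lemma \ref{l4.3} exploits), but at a maximum of your $\Theta$ one only has $\nabla Q=2\zeta'\la X,e_1\ra Q\,e_1\neq0$, so the term survives and is of size $\alpha Q\,|F^{ij}h_{i1}\nabla_j\rho|/F$. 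The factor $F^{ij}h_{i1}\nabla_j\rho/F$ is not controlled by any data available at this stage (for $F=\sigma_2^{1/2}$, the admissible point $\kappa=(M,M,\frac{1-M^2}{2M})$ has $F=1$ while $F^{11}\kappa_1\sim M^2/4$), it has no sign, and your only good terms, of the form $-C\alpha^2QF^{11}\la X,e_1\ra^2/F$ and $-C\alpha QF^{ij}g_{ij}/F$, cannot absorb it: a Cauchy--Schwarz absorption would need precisely the missing $-cQF^{ij}(h^2)_{ij}/F$ term. (There is also a sign problem with your literal choice $\zeta(t)=\alpha/t$: then $2\zeta'(Q-1)|X|^2\approx +2\alpha(1-Q)/\rho^2$ is a positive $O(\alpha)$ term that does not degenerate as $Q\to0$, while all of your favorable terms carry a factor $Q$; taking $\zeta=-\alpha/t$ fixes that sign but not the $h$-linear obstruction.) The paper avoids all of this by weighting with the support function: it runs the maximum principle on $P=uQ=uGF^{-\beta}$ with $\theta=\log P-\frac{A}{2}\rho^2$, where the identities $\nabla_iu=h_{i}^{l}\nabla_l\Phi$ and (\ref{2.13}) make every $h$-linear term cancel in (\ref{4.10}), leaving curvature only in the favorably signed term $+\beta\frac{P^2}{F}F^{ii}h_{ii}^2$ and gradient terms handled through the critical-point relation $\nabla P=AP\rho\nabla\rho$. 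To repair your proof you should switch to that (or an equivalent support-function-weighted) auxiliary quantity rather than $-\log Q+\zeta(|X|^2)$.
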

\begin{proof}
Note that $Q=GF^{-\beta}$. According to $(Q-1)|_{M_0}\le0$, the bound of $u$ and Lemma \ref{l4.3}, we can derive the lower bound of $F$. Next we shall derive the upper bound of $F$.  Let $\bar G=uG$, $P=uGF^{-\beta}=\bar GF^{-\beta}$, $\cL=\p_t-\beta\frac{P}{F}F^{ij}\nn_{ij}^2$, $\theta=\log P-\frac{A}{2}\rho^2$, where $A>0$ is a sufficiently large constant. Thus it suffice to get the lower bound of $P$ or $\theta$. At the spatial minimum point of $\theta$, we choose the normal coordinate system such that
$$h_j^i=h_{ij}=h_{ii}\delta_{ij},\qquad g_{ij}=\delta_{ij}.$$
Then $\nn_iu=h_i^l\nn_l\Phi=h_{ii}\nn_i\Phi$. Meanwhile, we have
$$0=\nn\theta=\frac{\nn P}{P}-A\rho\nn\rho.$$
Thus
\begin{equation}\label{4.10}
\begin{split}
\cL P=&\frac{P}{\bar G}\(d_X\bar G(X)(\frac{P}{u}-1)-d_\nu\bg(u\nn\frac{P}{u})\)\\
&+\beta\frac{P}{F}F^{ii}\(u\nn^2_{ii}\frac{P}{u}+(\frac{P}{u}-1)h_{ii}+2\nn_iu\nn_i\frac{P}{u}\)-\beta\frac{P}{F}F^{ij}\nn_{ij}^2P\\
=&(\frac{P}{u}-1)P(\frac{d_X\bar G(X)}{\bg}+\beta)-\frac{P}{\bar G}d_\nu\bg(\nn P-\frac{P\nn u}{u})-\beta\frac{P^2}{F}F^{ii}\frac{\nn^2_{ii}u}{u}\\
=&(\frac{P}{u}-1)P(\frac{d_X\bar G(X)}{\bg}+\beta)-\frac{P}{\bar G}d_\nu\bg(\nn P-\frac{P\nn u}{u})\\
&-\beta\frac{P^2}{F}(\frac{\nn_lF\nn^l\Phi}{u}+\frac{F}{u}-F^{ii}h_{ii}^2)\\
=&(\frac{P}{u}-1)P(\frac{d_X\bar G(X)}{\bg}+\beta)-\frac{P}{\bar G}d_\nu\bg(\nn P-\frac{P\nn u}{u})-\beta\frac{P^2}{u}+\beta\frac{P^2}{F}F^{ii}h_{ii}^2\\
&+P^2\frac{\nn_i\Phi}{u}\(\frac{\nn_iP}{P}-\frac{d_X\bg(e_i)}{\bg}-\frac{d_\nu\bg(h_{ii}e_i)}{\bg}\)\\
=&-P(\frac{d_X\bar G(X)}{\bg}+\beta)+P^2\frac{d_X\bg(\nu)}{\bg}+(\frac{\nn_i\Phi}{u}-\frac{d_\nu\bg(e_i)}{\bg})P\nn_iP+\beta\frac{P^2}{F}F^{ii}h_{ii}^2.
\end{split}
\end{equation}
\begin{equation}\label{4.11}
\begin{split}
\cL(\frac{A}{2}\rho^2)=&A\rho^2(\frac{P}{u}-1)-A\frac{\beta P}{F}F^{ij}(\nn_i\rho\nn_j\rho+\rho\nn_{ij}^2\rho)\\
=&A\rho^2(\frac{P}{u}-1)-A\frac{\beta P}{F}F^{ij}(g_{ij}-uh_{ij}).
\end{split}
\end{equation}
Combining (\ref{4.10}) and (\ref{4.11}), we derive
\begin{align*}
\cL\theta=&\frac{\cL P}{P}+\beta\frac{F^{ij}}{F}\frac{\nn_iP\nn_jP}{P}-\cL(\frac{A}{2}\rho^2)\\
\ge&(\frac{A}{2}\rho^2-C_{23}P-C_{24})+A(\frac{1}{2}\rho^2-C_{25}P)>0
\end{align*}
for sufficiently small $P$ and sufficiently large $A$. Thus by the maximum principle, we derive the lower bound of $P$.
\end{proof}
\begin{lemma}\label{l4.8}
	Let $X(\cdot,t)$ be the solution to the flow (\ref{1.10}) which encloses the origin for $t\in[0,T)$. If $F$ satisfies Assumption \ref{a1.3}, and $\rho,u$ have positive lower and upper bound,
	then there is a positive constant $C_{26}$ depending on the initial hypersurface,  $\Vert\widetilde G\Vert_{C^0}$, $\Vert\widetilde G\Vert_{C^1}$ and $\beta$,  such that
	$$F\geq C_{26}.$$
\end{lemma}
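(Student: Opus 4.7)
The plan is to follow the auxiliary-function strategy of Lemma \ref{l4.7}, adapted to the contracting flow (\ref{1.10}). Since $\rho$ and $u$ are bounded and $\widetilde G\in C^1$, the function $\widetilde G$ is bounded above by a constant depending only on $\|\widetilde G\|_{C^0}$. Hence to prove the lower bound of $F$ it suffices to establish a positive lower bound for the quantity $\widetilde P:=u\widetilde G F^\beta=u\widetilde Q$; this will yield $F^\beta=\widetilde P/(u\widetilde G)\ge c>0$, and therefore $F\ge C_{26}$.

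I would introduce the auxiliary function $\theta:=\log\widetilde P-\tfrac{A}{2}\rho^2$ for a large constant $A>0$ to be chosen, together with the parabolic operator $\cL:=\partial_t-\beta\tfrac{\widetilde P}{F}F^{ij}\nabla^2_{ij}$, whose second-order part matches the linearization of $\partial_t F$ along flow (\ref{1.10}). At a spatial minimum of $\theta$, choose normal coordinates diagonalizing $h^i_j$; the condition $\nabla\theta=0$ gives $\nabla_i\widetilde P/\widetilde P=A\nabla_i\Phi$, and Lemma \ref{l2.2} gives $\nabla_iu=h_{ii}\nabla_i\Phi$ and the Hessian formula for $u$. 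Using Lemma \ref{l3.3} with $\mathscr{F}=1-\widetilde Q$, expand $\partial_t u$, $\partial_t F=uF^{ij}\nabla^2_{ij}\widetilde Q-(1-\widetilde Q)F+\text{lower order}$, and consequently $\partial_t\widetilde P$. The principal-part cancellation between $\beta\tfrac{\widetilde P}{F}\partial_tF$ and $-\beta\tfrac{\widetilde P}{F}F^{ij}\nabla^2_{ij}\widetilde P$ proceeds exactly as in Lemma \ref{l4.7} (after using $\widetilde P=u\widetilde Q$), leaving the key good term $+\beta\tfrac{\widetilde P^2}{F}F^{ii}h_{ii}^2$ produced by the $-u(h^2)_{ij}$ piece in Lemma \ref{l2.2}. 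One then computes
\begin{equation*}
\cL\bigl(\tfrac{A}{2}\rho^2\bigr)=A\rho^2(1-\widetilde Q)-A\beta\tfrac{\widetilde P}{F}F^{ij}(g_{ij}-uh_{ij})
\end{equation*}
and adds the gradient contribution $\beta\tfrac{F^{ii}}{F}(\nabla_i\widetilde P)^2/\widetilde P=A^2\beta\tfrac{F^{ii}}{F}(\nabla_i\Phi)^2\widetilde P$ coming from the $\log\widetilde P$ piece.

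Combining these and using $F^{ij}h^i_j=F$, together with the bounds on $\rho$, $u$, and $\|\widetilde G\|_{C^1}$, one obtains at the spatial minimum an inequality of the form
\begin{equation*}
\cL\theta\ \ge\ \tfrac{A}{2}\rho^2-C_{27}\widetilde P-C_{28}+\text{nonnegative terms},
\end{equation*}
where $C_{27},C_{28}$ depend on the initial data, $\|\widetilde G\|_{C^1}$ and $\beta$. Choosing $A$ sufficiently large and using the positive lower bound on $\rho$, the right-hand side is strictly positive whenever $\widetilde P$ is below some threshold, contradicting $\cL\theta\le 0$ at a spatial minimum. By the minimum principle, $\widetilde P$ is therefore bounded below by a positive constant, which yields $F\ge C_{26}$.

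The main obstacle is the sign bookkeeping. Flow (\ref{1.10}) has speed $1-\widetilde Q$ of opposite sign to that of (\ref{1.9}), and $\widetilde P$ contains $F^{+\beta}$ in place of $F^{-\beta}$, so several intermediate terms in $\partial_t F$ and $\partial_t\widetilde P$ flip sign relative to Lemma \ref{l4.7}. One must verify carefully that the crucial good term $+\beta\widetilde P^2 F^{ii}h_{ii}^2/F$ nevertheless appears with the correct positive sign, and that the cross terms involving $d_X\widetilde G$, $d_\nu\widetilde G$ can be absorbed either by the quadratic gradient contribution (via Cauchy--Schwarz) or by the $A\rho^2$ coercive term, so that the threshold argument above goes through.
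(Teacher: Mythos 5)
There is a genuine gap, and it is exactly the point you deferred to ``sign bookkeeping'' without carrying it out. Along the contracting flow (\ref{1.10}) the speed is $1-\widetilde Q$, so $\p_t\big(\tfrac{A}{2}\rho^2\big)=A\rho^2(1-\widetilde Q)$, and with your choice $\theta=\log\widetilde P-\tfrac{A}{2}\rho^2$ this enters $\cL\theta$ with a minus sign: at the spatial minimum the contribution of the auxiliary term is $-A\rho^2(1-\widetilde Q)+A\beta\tfrac{\widetilde P}{F}F^{ij}(g_{ij}-uh_{ij})$, which in the regime you want to exclude ($\widetilde P$ small) is essentially $-A\rho^2$, a large \emph{negative} term; the piece $A\beta\widetilde P F^{ij}g_{ij}/F$ carries a factor $\widetilde P$ and cannot be relied on to compensate it. Hence the inequality you assert, $\cL\theta\ge\tfrac{A}{2}\rho^2-C_{27}\widetilde P-C_{28}+\dots$, is not what the computation yields, and the threshold argument collapses. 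Note that in Lemma \ref{l4.7} the same ansatz works only because there the speed is $Q-1$ with $Q=GF^{-\beta}$, so $-\cL(\tfrac{A}{2}\rho^2)$ produces $+A\rho^2(1-Q)\approx +A\rho^2$; for (\ref{1.10}) the sign flips. Repairing this by taking $\theta=\log\widetilde P+\tfrac{A}{2}\rho^2$ restores the coercive $+A\rho^2$, but then the second-order part contributes $-A\beta\tfrac{\widetilde P}{F}F^{ij}g_{ij}=-A\beta\, u\wg F^{\beta-1}F^{ij}g_{ij}$, which is not controllable at this stage: no curvature bounds are yet available, $F^{ij}g_{ij}$ can be arbitrarily large, and $F^{\beta-1}$ is not small precisely when $F$ is small.

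The paper sidesteps both difficulties with a different auxiliary function: it works with $Q=\wg F^{\beta}$, $\cL=\p_t-\beta\tfrac{uQ}{F}F^{ij}\nn^2_{ij}$ (the same operator as yours, since $\widetilde P=uQ$) and $\theta=\log Q+Au$. The coercivity then comes from $A\cL u$, whose expansion via Lemma \ref{l2.2} produces the term $Au$ together with $-A(1+\beta)uQ$, first-order terms, and the good term $A\beta u^2Q F^{ij}(h^2)_{ij}/F\ge 0$; crucially, differentiating $u$ never creates an $F^{ij}g_{ij}$ term. Moreover, after inserting $\nn Q=-AQ\nn u$ at the minimum point, the $d_\nu\wg$ first-order terms from $\cL Q$ and $A\cL u$ cancel exactly, and the remaining gradient terms combine into $(uA-2)\beta AQ\tfrac{F^{ii}}{F}(\nn_iu)^2\ge0$ for $A$ large, giving $\cL\theta>0$ whenever $Q$ is small and hence the lower bound of $Q$, i.e.\ of $F$. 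If you want to keep your route, replace $-\tfrac{A}{2}\rho^2$ by $+Au$ (and it is cleaner to estimate $Q$ rather than $\widetilde P=uQ$); the rest of your cancellation and absorption scheme then goes through essentially as in the paper.
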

\begin{proof}
Let $Q=\wg F^\beta$, $\cL=\p_t-\beta\frac{uQ}{F}F^{ij}\nn_{ij}^2$, $\theta=\log Q+Au$. Similar to Lemma \ref{l4.7},  it suffice to get the lower bound of $Q$ or $\theta$. At the spatial minimum point of $\theta$, we choose the normal coordinate system such that
$$h_j^i=h_{ij}=h_{ii}\delta_{ij},\qquad g_{ij}=\delta_{ij}.$$
Then $\nn_iu=h_i^l\nn_l\Phi=h_{ii}\nn_i\Phi$. Meanwhile, we have
$$0=\nn\theta=\frac{\nn Q}{Q}+A\nn u.$$
We can derive
\begin{equation}\label{4.12}
\begin{split}
\cL Q=&Q\(\frac{d_X\wg(X)(1-Q)}{\wg}+\frac{d_\nu\wg(u\nn Q)}{\wg}\)
+\beta\frac{QF^{ii}}{F}\((Q-1)h_{ii}+2\nn_iu\nn_iQ\)\\
=&Q\((\frac{d_X\wg(X)}{\wg}-\beta)(1-Q)+\frac{ud_\nu\wg(\nn Q)}{\wg}+2\beta\frac{F^{ii}}{F}\nn_iu\nn_iQ\).
\end{split}
\end{equation}
\begin{equation}\label{4.13}
\begin{split}
	\cL u=&u(1-Q)+u\la X,\nn Q\ra-\beta\frac{uQ}{F}F^{ij}\nn_{ij}^2u\\
=&u-(1+\beta)uQ+uQ\frac{d_X\wg(\nn\Phi)}{\wg}+uQ\frac{d_\nu\wg(\nn u)}{\wg}+\beta u^2Q\frac{F^{ij}(h^2)_{ij}}{F}.
\end{split}
\end{equation}
Thus, by (\ref{4.12}) and (\ref{4.13}),
\begin{align*}
\cL\theta=&\frac{\cL Q}{Q}+u\beta \frac{F^{ij}}{F}\frac{\nn_iQ\nn_jQ}{Q}+A\cL u\\
\ge&(\frac{A}{2}u-C_{27}Q-C_{28})+A(\frac{1}{2}u-C_{29}Q)+(uA-2)\beta AQ\frac{F^{ii}}{F}(\nn_iu)^2>0
\end{align*}
for sufficiently small $Q$ and sufficiently large $A$. Thus by the maximum principle, we derive the lower bound of $Q$.
\end{proof}
\begin{lemma}\label{l4.9}
Let $X(\cdot,t)$ be the solution to the flow (\ref{1.10}) which encloses the origin for $t\in[0,T)$. If $F$ satisfies Assumption \ref{a1.3}, $F$ is an inverse concave function, and $\rho,u$ have positive lower and upper bound,
then there is a positive constant $C_{30}$ depending on the initial hypersurface,  $\Vert\widetilde G\Vert_{C^0}$, $\Vert\widetilde G\Vert_{C^1}$ and $\beta$,  such that
	$$F\leq C_{30}.$$
\end{lemma}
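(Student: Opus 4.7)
The strategy is a maximum principle argument on an auxiliary function, analogous to Lemma \ref{l4.8} but now tuned to detect an \emph{upper} bound on $Q=\wg F^\beta$; since $\wg$ has uniform positive upper and lower bounds by the $C^0$ estimates and the hypothesis, this will immediately imply the desired upper bound on $F$. The natural candidate is
$$\theta=\log Q-Au,$$
with $A>0$ a constant to be chosen sufficiently large, together with the operator $\cL=\p_t-\beta\frac{uQ}{F}F^{ij}\nn^2_{ij}$ already used in Lemma \ref{l4.8}. Because $u$ is bounded above and below, it suffices to bound $\theta$ from above along the flow.

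At a spatial maximum of $\theta$ we choose a normal frame diagonalizing $h^i_j$, so that $F^{ij}$ is diagonal and the critical condition becomes $\nn Q=QA\nn u$. Combining the evolution identities (\ref{4.12}), (\ref{4.13}) of Lemma \ref{l4.8} with
$$\cL\log Q=\frac{\cL Q}{Q}+\beta\frac{uF^{ij}\nn_iQ\nn_jQ}{FQ}$$
and substituting $\nn Q=QA\nn u$ yields a pointwise identity of the schematic form
\begin{align*}
\cL\theta=&\Big(\tfrac{d_X\wg(X)}{\wg}-\beta\Big)(1-Q)+\beta\frac{uQA^2F^{ii}(\nn_iu)^2}{F}+A(1+\beta)uQ\\
&-Au-A\beta u^2Q\frac{F^{ij}(h^2)_{ij}}{F}+(\text{coefficient}\cdot A\cdot Q)_{\text{bounded}},
\end{align*}
where the last bracket collects the terms carrying $d_X\wg$ and $d_\nu\wg$ (all controlled by the $C^1$ bound of $\wg$ and the fixed range of $\rho,u$).

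The main obstacle is to show that the genuinely negative term $-A\beta u^2Q\,F^{ij}(h^2)_{ij}/F$ dominates every other contribution once $Q$ (equivalently $F$) is large. This is exactly the point where the inverse concavity of $F$ enters: at a diagonal matrix with eigenvalues in $\Gamma$ it yields, in addition to the usual Euler/Cauchy--Schwarz bound $F^{ij}(h^2)_{ij}\ge F^2/\sum_iF^{ii}$, an algebraic inequality of the form $F^{ij}(h^2)_{ij}\ge c\,F^2$ for some $c>0$ depending only on $F$ and $\Gamma$. Consequently $F^{ij}(h^2)_{ij}/F\to\infty$ as $F\to\infty$, so for $A$ fixed sufficiently large relative to $\|\wg\|_{C^1}$ and the $C^0$ data, and for $Q$ beyond a threshold, one has $\cL\theta<0$ at the maximum point---contradicting the maximum principle. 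This forces $Q\le C_{30}$ and hence $F\le C_{30}$ for a constant depending only on the initial hypersurface, $\|\wg\|_{C^1}$ and $\beta$, as claimed.
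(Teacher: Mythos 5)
Your overall strategy (a maximum principle for an auxiliary function built from $Q=\wg F^\beta$, combined with the inverse-concavity inequality $F^{ij}(h^2)_{ij}\ge F^2$) is in the right family, but the specific test function $\theta=\log Q-Au$ does not close, and this is exactly the difficulty the paper's proof is built to avoid. At a spatial maximum you have $\nn Q=AQ\nn u$, so the gradient correction in $\cL\log Q$, namely $\beta\frac{uQ}{F}F^{ij}\nn_i\log Q\,\nn_j\log Q$, becomes $\beta A^2uQ\frac{F^{ii}(\nn_iu)^2}{F}$, and the term $2\beta\frac{F^{ii}}{F}\nn_iu\nn_iQ$ coming from the evolution of $Q$ becomes $2\beta AQ\frac{F^{ii}(\nn_iu)^2}{F}$; both are positive, and you have listed the first one in your schematic display without estimating it. Since $\nn_iu=h_{ii}\nn_i\Phi$, these terms are of exactly the same order in the curvature as your good term $-A\beta u^2Q\frac{F^{ij}(h^2)_{ij}}{F}$: one has $F^{ii}(\nn_iu)^2\le(\rho^2-u^2)\,F^{ij}(h^2)_{ij}$, and this can be saturated up to constants. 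Hence letting $Q$ (equivalently $F$) be large does not separate the bad terms from the good one; the comparison is purely between the coefficients $(A^2u+2A)(\rho^2-u^2)$ and $Au^2$, and it fails precisely when $A$ is taken ``sufficiently large'', while for small $A$ it would require the unavailable pointwise inequality $\rho^2-u^2\lesssim u^2$. So the asserted domination ``once $Q$ is large'' is unjustified, and this is a genuine gap. (Your appeal to inverse concavity itself is fine: the paper uses it in the sharp form $F^{ij}(h^2)_{ij}\ge F^2$.)

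The paper circumvents the problem with a quotient test function and a drifted operator: it sets $P=u\wg F^\beta$, $a=\frac12\min u$, $\theta=\frac{P}{u-a}$, and $\cL=\p_t-\beta\frac{P}{F}F^{ij}\nn^2_{ij}-P\big(\frac{d_\nu(u\wg)(e_i)}{u\wg}-\frac1u\nn^i\Phi\big)\nn_i$. At a maximum the critical equation $\frac{\nn P}{P}=\frac{\nn u}{u-a}$ makes all gradient terms cancel (no $A^2$-type term ever appears), and the two curvature contributions combine into $\beta P\frac{F^{ij}(h^2)_{ij}}{F}\big(1-\frac{u}{u-a}\big)=-\beta\frac{a}{u-a}P\frac{F^{ij}(h^2)_{ij}}{F}$, which after $F^{ij}(h^2)_{ij}\ge F^2$ dominates the remaining bounded terms $C_{31}+C_{32}P$ once $F$ is large. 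The essential point is the factor $-\frac{a}{u-a}<0$ produced by the quotient with $u-a$; a linear exponential weight such as $e^{-Au}$ (your $-Au$ in the log) cannot reproduce it. If you rework your argument with $\theta=\frac{P}{u-a}$ (or its logarithm) in place of $\log Q-Au$, the rest of your outline goes through as in the paper.
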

\textbf{Remark:} $\sigma_{k}^\frac{1}{k}$ is an inverse concave function. More examples can be seen in \cite{B4}.
\begin{proof}
Let $P=u\wg F^\beta$, $\hat G=u\wg$, $\cL=\p_t-\beta\frac{P}{F}F^{ij}\nn_{ij}^2-P(\frac{d_\nu \hat G(e_i)}{\hat G}-\frac{1}{u}\nn^i\Phi)\nn_i$, $\theta=\frac{P}{u-a}$, where $a=\frac{1}{2}\min u$. Similar to Lemma \ref{l4.7},  it suffice to get the upper bound of $P$ or $\theta$. At the spatial maximum point of $\theta$, we choose the normal coordinate system such that
$$h_j^i=h_{ij}=h_{ii}\delta_{ij},\qquad g_{ij}=\delta_{ij}.$$
Then $\nn_iu=h_i^l\nn_l\Phi=h_{ii}\nn_i\Phi$. Meanwhile, we have
$$0=\frac{u-a}{P}\nn\theta=\frac{\nn P}{P}-\frac{\nn u}{u-a}.$$
We can derive
\begin{equation}\label{4.14}
\begin{split}
\cL P=&\frac{P}{\hat G}\(d_X\hat G(X)(1-\frac{P}{u})
+d_\nu\hat G(u\nn(\frac{P}{u}))\)
+\beta\frac{P}{F}F^{ij}\(u\nn^2_{ij}(\frac{P}{u})-(1-\frac{P}{u})h^i_j\\
&+2h_{il}\nn^l(\frac{P}{u})\nn_j\Phi\)-\beta\frac{P}{F}F^{ij}\nn_{ij}^2P-P(\frac{d_\nu \hat G(e_i)}{\hat G}-\frac{1}{u}\nn^i\Phi)\nn_iP\\
=&P(1-\frac{P}{u})\(\frac{d_X\hat G(X)}{\hat G}-\beta\)-\frac{P^2d_\nu\hat G(\frac{\nn u}{u})}{\hat G}-\beta\frac{P^2}{F}F^{ij}\frac{\nn^2_{ij}u}{u}+\frac{P}{u}\nn^i\Phi\nn_iP\\
=&P(1-\frac{P}{u})\(\frac{d_X\hat G(X)}{\hat G}-\beta\)-\beta\frac{P^2}{u}+\beta\frac{P^2}{F}F^{ij}h^2_{ij}+\frac{P^2}{u}\nn^i\Phi\frac{d_X\hat G(e_i)}{\hat G}.
\end{split}
\end{equation}
Note that
\begin{align*}
\frac{d_\nu \hat G(e_i)}{\hat G}-\frac{1}{u}\nn^i\Phi=\frac{d_\nu \wg(e_i)}{\wg}
\end{align*}
and there is a difference between $\cL$ in this lemma and $\cL$ in the above lemma. By (\ref{4.13}), we have
\begin{equation}\label{4.15}
	\cL u
=u-(1+\beta)P+P\frac{d_X\wg(\nn\Phi)}{\wg}+\beta uP\frac{F^{ij}(h^2)_{ij}}{F}.
\end{equation}
Then by (\ref{4.14}) and (\ref{4.15}), we get
\begin{align*}
\frac{u-a}{P}\cL\theta=&\frac{\cL P}{P}-\frac{\cL u}{u-a}\\
\le&C_{31}+C_{32}P-\beta\frac{a}{u-a}P\frac{F^{ij}(h^2)_{ij}}{F}\\
\le&C_{31}+C_{32}P-\beta\frac{a}{u-a}PF\le0
\end{align*}
for sufficiently large $F$, where we have used $F^{ij}(h^2)_{ij}\ge F^2$ since $F$ is inverse concave. Thus by the maximum principle, we derive the upper bound of $F$.
\end{proof}
In summary,
by Lemma \ref{l4.3}, \ref{4.7}-\ref{4.9}, we can get the bounds of $u$ and $F$.
\begin{corollary}\label{c4.10}
Let $X(\cdot,t)$ be the solution to the flow (\ref{1.9}) which encloses the origin for $t\in[0,T)$. If $F$ satisfies Assumption \ref{a1.3}, $G^\frac{1}{\beta}$ satisfies (\rmnum{2})-(\rmnum{5}) of Assumption \ref{a1.4}, $(Q-1)|_{M_0}$ has a sign and $\rho$ has positive lower and upper bound,
then there are positive constants $C_{33}$, $C_{34}$ depending on the initial hypersurface, $\Vert G\Vert_{C^0}$, $\Vert G\Vert_{C^1}$ and $\beta$,  such that
$$\frac{1}{C_{33}}\leq u\leq C_{33}\quad\text{and}\quad\frac{1}{C_{34}}\leq F\leq C_{34}.$$
\end{corollary}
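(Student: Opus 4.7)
The corollary is essentially a bookkeeping statement assembling the estimates already established in Lemmas \ref{l4.3}--\ref{l4.5} and \ref{l4.7}. The plan is to run a short case analysis on which sub-condition of Assumption \ref{a1.4} the function $G^{\frac{1}{\beta}}$ satisfies, and invoke the corresponding lemma in each case.

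For the bounds on $u$, the upper bound follows immediately from $u=\la X,\nu\ra\le|X|=\rho$ together with the already-established $C^0$ bound on $\rho$. The lower bound splits into two sub-cases: if $G^{\frac{1}{\beta}}$ satisfies (\rmnum{2}), (\rmnum{4}), or (\rmnum{5}) of Assumption \ref{a1.4}, I would apply Lemma \ref{l4.4}; if it satisfies (\rmnum{3}), I would apply Lemma \ref{l4.5}. Both produce a positive lower bound $u\ge c>0$ depending only on the initial hypersurface, $\|G\|_{C^0}$, $\|G\|_{C^1}$, and $\beta$.

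For the bounds on $F$, the plan is to write $F^{\beta}=G/Q$ with $Q=GF^{-\beta}$. Since $X$ lies in the compact shell $\{r_1\le|X|\le r_2\}$ and $\nu\in\mS^n$, $G$ is uniformly bounded above and below by positive constants controlled by $\|G\|_{C^0}$, so two-sided bounds on $F$ reduce to two-sided bounds on $Q$. Under (\rmnum{3}), (\rmnum{4}), or (\rmnum{5}), the monotonicity condition $\frac{\p}{\p\rho}(\rho G^{\frac{1}{\beta}}(X,\nu))\le0$ is built into the hypothesis, and Lemma \ref{l4.3} directly yields $C_1^{-1}\le Q\le C_1$. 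Under (\rmnum{2}) this monotonicity is not available, but the first part of Lemma \ref{l4.3} still shows that the sign of $Q-1$ is preserved along the flow; since the case-(\rmnum{2}) argument in Lemma \ref{l4.4} was carried out under the choice $(Q-1)|_{M_0}\le0$, we have $Q\le1$ for all time, which already gives the lower bound of $F$ from the upper bound of $G$. The matching upper bound of $F$ is then precisely the content of Lemma \ref{l4.7}.

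The main obstacle, if any, is purely organizational: one must match each sub-condition of Assumption \ref{a1.4} with the lemma that handles it, and in case (\rmnum{2}) with the compatible sign of $(Q-1)|_{M_0}$ chosen in the proofs of Lemmas \ref{l4.4} and \ref{l4.7}. No new analytic estimate is needed beyond the maximum-principle and auxiliary-function arguments already carried out in the earlier lemmas.
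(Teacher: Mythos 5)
Your proposal is correct and takes essentially the same route as the paper, whose proof is exactly the assembly you describe: $u\le\rho$ together with Lemma \ref{l4.4} (cases (\rmnum{2}), (\rmnum{4}), (\rmnum{5})) and Lemma \ref{l4.5} (case (\rmnum{3})) for the bounds on $u$, Lemma \ref{l4.3} for the two-sided bound on $Q$ and hence on $F$ under (\rmnum{3})--(\rmnum{5}), and sign preservation of $Q-1$ plus Lemma \ref{l4.7} under (\rmnum{2}). One small slip to fix in wording: in case (\rmnum{2}), $Q\le1$ gives $F^\beta\ge G$, so the lower bound of $F$ comes from the \emph{lower} bound of $G$ (which holds because $G$ is positive only where $\la X,\nu\ra>0$, so it requires the lower bound of $u$ you established first, not just compactness of the shell), rather than from the upper bound of $G$.
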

\begin{corollary}\label{c4.11}
Let $X(\cdot,t)$ be the solution to the flow (\ref{1.10}) which encloses the origin for $t\in[0,T)$. If $F=\sigma_{k}^\frac{1}{k}$, $G^\frac{1}{\beta}$ satisfies one of Assumption \ref{a1.4}, $(Q-1)|_{M_0}$ has a sign and $\rho$ has positive lower and upper bound,
then there are positive constants $C_{33}$, $C_{34}$ depending on the initial hypersurface, $\Vert\wg\Vert_{C^0}$, $\Vert\wg\Vert_{C^1}$ and $\beta$,  such that
	$$\frac{1}{C_{33}}\leq u\leq C_{33}\quad\text{and}\quad\frac{1}{C_{34}}\leq F\leq C_{34}.$$
\end{corollary}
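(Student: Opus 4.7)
The statement is a corollary that simply assembles results already established in the preceding lemmas for the contracting flow (\ref{1.10}), so my plan is essentially bookkeeping: identify which lemma gives each of the four desired bounds and verify the hypotheses.

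First I would pass from (\ref{1.10}) to the general framework (\ref{1.8}). Taking $\Psi(s)=-s^{-1}$ and replacing the nonlinearity $G$ in (\ref{1.8}) by $\wg^{-1}$ yields
$$\Psi(\wg^{-1}F^{-\beta})-\Psi(1)=1-\wg F^{\beta},$$
so (\ref{1.10}) is precisely (\ref{1.8}) in this special case. Under this identification, the hypothesis ``$G^{1/\beta}$ satisfies one of Assumption \ref{a1.4}'' in Corollary \ref{c4.11} corresponds to ``the $G^{1/\beta}$ of Lemma \ref{l4.6} satisfies one of Assumption \ref{a1.4},'' and the sign condition on $(Q-1)|_{M_0}$ transcribes to the same sign condition for the quantity $\wg F^{\beta}-1$ at $t=0$. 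With the $C^{0}$ bound on $\rho$ assumed, Lemma \ref{l4.6} applies and yields the gradient bound $|D\rho|\le C_{11}$ together with the lower bound $u\ge C_{12}$.

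The upper bound on $u$ is immediate from the $C^{0}$ assumption: since $u=\la X,\nu\ra\le |X|=\rho\le r_{2}$, we already have $u\le r_{2}$. Together with the lower bound from Lemma \ref{l4.6} this produces positive two-sided control on $u$, giving the first inequality $C_{33}^{-1}\le u\le C_{33}$.

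For the curvature bound, once $\rho$ and $u$ are pinched between positive constants I would invoke Lemma \ref{l4.8} to obtain $F\ge C_{26}$ (lower bound), and Lemma \ref{l4.9} to obtain $F\le C_{30}$ (upper bound). Lemma \ref{l4.9} requires $F$ to be inverse concave; this is exactly the property of $F=\sigma_{k}^{1/k}$ recorded in the remark following the lemma, so no new verification is needed. Combining the two estimates with constants depending only on the initial data, $\|\wg\|_{C^{0}}$, $\|\wg\|_{C^{1}}$ and $\beta$ yields the second inequality $C_{34}^{-1}\le F\le C_{34}$.

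There is no genuine obstacle here; the mild issue is purely notational, namely keeping track of whether ``$G$'' in a cited lemma refers to the nonlinearity of (\ref{1.8}) in its general form or to $\wg^{-1}$ in the specific flow (\ref{1.10}), and correspondingly translating norms of $G$ into norms of $\wg$ using the already established lower bound on $\wg$. Once this dictionary is spelled out, the corollary follows in a few lines from Lemmas \ref{l4.6}, \ref{l4.8} and \ref{l4.9}.
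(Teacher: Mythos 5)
Your proposal is correct and follows essentially the same route as the paper, which (without writing out a separate proof) assembles Corollary \ref{c4.11} from Lemma \ref{l4.6} for $|D\rho|$ and the lower bound of $u$ (the upper bound $u\le\rho\le r_2$ being trivial), and then Lemmas \ref{l4.8} and \ref{l4.9} for the two-sided bound on $F$ along flow (\ref{1.10}), using that $\sigma_k^{1/k}$ is inverse concave; Lemma \ref{l4.3} enters implicitly through the sign preservation of $Q-1$. Your remark on translating between $G=\wg^{-1}$ and $\wg$ (and their norms) is exactly the bookkeeping the paper leaves tacit.
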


\section{The $C^2$ estimates and proofs of main theorems}
We first derive the $C^2$ estimates of flows (\ref{1.9}) and (\ref{1.10}) respectively.
\begin{lemma}\label{l5.1}
Let $X(\cdot,t)$ be a smooth, closed and star-shaped solution to the flow (\ref{1.9}) which encloses the origin for $t\in[0,T)$. If $F$ satisfies Assumption \ref{a1.3}, $\vert D\rho\vert$ has an upper bound and $\rho$, $F$ has positive lower and upper bounds, $uG$ is log-convex, and $\frac{d_\nu G(\nu)}{G}\le\beta$,
then there is a  positive constant $C_{35}$ depending on the initial hypersurface, $\Vert G\Vert_{C^0}$, $\Vert G\Vert_{C^1}$, $\Vert G\Vert_{C^2}$ and $\beta$,  such that
 the principal curvatures of $X(\cdot,t)$ are uniformly bounded from above $$\k_i(\cdot,t)\leq C_{35} \text{ \qquad } \forall1\le i\le n,$$
and hence, are compactly contained in $\Gamma$, in view of Corollary \ref{c4.10}.
\end{lemma}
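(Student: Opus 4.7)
The plan is to apply the parabolic maximum principle to an auxiliary function of the form
$$W(x,t) = \log h^1_1(x,t) - A\log u(x,t) + B\rho^2(x,t),$$
where $h^1_1$ denotes the largest eigenvalue of the Weingarten operator $(h^i_j)$ and $A, B > 0$ are constants chosen at the end. Suppose $W$ attains its parabolic maximum at some $(x_0, t_0)$ with $t_0 > 0$ (otherwise the initial bound suffices). By the standard perturbation trick of Gerhardt we may regard $h^1_1$ as smooth near $(x_0, t_0)$, and rotating the spatial frame we arrange that $(h^i_j)$ is diagonal with $\kappa_1 = h^1_1 \ge \cdots \ge \kappa_n$, which simultaneously diagonalises $(F^{ij})$.

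Next I would apply the linearised operator $\cL = \partial_t - \beta\frac{uQ}{F}F^{ij}\nabla^2_{ij}$, which is the natural choice from Lemma \ref{l3.4} with $\Psi'\equiv 1$ for flow (\ref{1.9}). Using the evolution of $h^i_j$ from Lemma \ref{l3.3}, the leading contribution to $\partial_t h^1_1$ is $-u\nabla^1\nabla_1 Q$. Writing $\log Q = \log G - \beta\log F$ and commuting derivatives via the Codazzi identity and the Ricci identity in the flat ambient space,
$$h_{11;ii} = h_{ii;11} + h_{11}(h^2)_{ii} - h_{ii}(h^2)_{11},$$
the concavity inequality $-F^{ij,kl}h_{ij;1}h_{kl;1}\ge 0$ lets one discard the worst third-order term. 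The computation reduces to (i) a cubic curvature term controllable by the $1$-homogeneity of $F$, (ii) first-derivative terms in $G$ absorbed via the critical-point identity $\nabla_i h^1_1/h^1_1 = A\nabla_i u/u - 2B\rho\nabla_i\rho$, and (iii) a quadratic $(h^1_1)^2$ contribution coming from $\partial^2_{\nu\nu}\log G(e_1,e_1)$.

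The crucial structural observation is that because $u = \langle X,\nu\rangle$ satisfies $\partial_\nu u = X$, log-convexity of $uG$ in $\nu$ is equivalent to
$$\partial^2_{\nu\nu}\log G(e_1,e_1) \ge \frac{\langle X,e_1\rangle^2}{u^2} \ge 0.$$
After multiplication by the negative coefficient $-\beta uQ/F$ produced by the evolution equation, this becomes a strictly negative multiple of $(h^1_1)^2$ in $\cL W$. The additional hypothesis $d_\nu G(\nu)\le \beta G$ is invoked to neutralise the first-order term $h^1_1\,d_\nu\log G(\nu)$ that arises from differentiating $\log Q$ once. Choosing $A$ and $B$ large enough so that the barrier dominates the first-order residuals, the computation collapses to
$$0 \le \cL W \le -c(h^1_1)^2 + C_1 h^1_1 + C_2$$
at $(x_0,t_0)$, forcing $h^1_1\le C_{35}$. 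Combined with the lower bound $F\ge 1/C_{34}$ from Corollary \ref{c4.10} and the boundary condition $F|_{\partial\Gamma}=0$, the principal curvatures remain compactly contained in $\Gamma$.

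\emph{The principal obstacle} is handling the indefinite gradient contribution $-F^{ii}(\nabla_i h^1_1)^2/(h^1_1)^2$ coming from $\nabla^2\log h^1_1$ against the non-negative but off-diagonal mass of $-F^{ij,kl}h_{ij;1}h_{kl;1}$. Since $F$ is only assumed concave (not inverse-concave), the resolution is the Andrews--Gerhardt tensorial inequality
$$-F^{ij,kl}\eta_{ij}\eta_{kl} \ge \sum_{p\ne q}\frac{F^{pp}-F^{qq}}{\kappa_q - \kappa_p}\eta_{pq}^2,$$
applied with $\eta = \nabla_1 h$, which absorbs the off-diagonal part so that only the diagonal gradient terms need to be balanced against the strictly negative $(h^1_1)^2$-coefficient supplied by the log-convexity of $uG$. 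This algebraic matching, together with the precise choice of $A$ and $B$ in the barrier, is the technical core of the estimate.
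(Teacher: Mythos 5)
There is a genuine gap, and it sits at what you call the ``crucial structural observation.'' Log-convexity of $uG$ gives only $\partial^2_{\nu\nu}\log G(e_1,e_1)\ge \langle X,e_1\rangle^2/u^2\ge 0$, and the lemma assumes log-convexity, not \emph{strict} log-convexity; at a maximum point where $\langle X,e_1\rangle=0$ (and more generally whenever $d_\nu d_\nu\log(uG)(e_1,e_1)=0$, e.g.\ in the homogeneous applications) this term contributes nothing, so it cannot be the source of a uniform $-c(\kappa_1)^2$ with $c>0$. In the paper's proof the log-convexity of $\bar G=uG$ plays a completely different role: writing $J=F^{-\beta}$, the expansion of $-u\nabla_1\nabla_1(GJ)$ produces the indefinite cross terms $2\beta\big(d_X\bar G(e_1)+d_\nu\bar G(h_{11}e_1)\big)\frac{J}{F}\nabla_1F$, which involve $\nabla_1F$ and hence are \emph{not} controlled by the critical-point identity for $\nabla\log h^1_1$; these are absorbed by Cauchy--Schwarz against $-Jh_{11}^2\,d_\nu d_\nu\bar G(e_1,e_1)$ and $-\beta(\beta+1)\frac{\bar GJ}{F^2}(\nabla_1F)^2$, and it is exactly here that $d_\nu d_\nu\bar G\ge (d_\nu\bar G)^2/\bar G$ is needed. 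Your proposal never addresses these $\nabla_1 F$ cross terms. Also, the hypothesis $d_\nu G(\nu)\le\beta G$ is used to make the coefficient of the quadratic term $-\bar GJh_{11}^2\big(\beta+1-\frac{d_\nu\bar G(\nu)}{\bar G}\big)$ nonnegative, not to neutralise a term linear in $h^1_1$.

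The actual negativity in the paper comes from the barrier in $u$: with $p(u)=-\log(u-\tfrac12\min u)$ one has $1+p'u<0$, and the term $(1+p'u)\beta\frac{\bar GJ}{F}F^{ij}(h^2)_{ij}$ supplies the decisive quadratic decay. Since $F^{ij}(h^2)_{ij}$ (or $F^{11}\kappa_1^2$) need not dominate $\kappa_1^2$ uniformly, the paper splits into the two cases $\kappa_n<-\eps_1\kappa_1$ and $\kappa_n\ge-\eps_1\kappa_1$; in the second case it uses the refined concavity inequality (5.16), the ordering $F^{11}\le\cdots\le F^{nn}$, the exact identity $p'^2=p''$ (special to this choice of $p$) to cancel the gradient terms coming from $\nabla\log h^1_1$ at the critical point, and finally the large-$N$ radial term to kill the remaining bounded quantities. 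Your barrier $-A\log u+B\rho^2$ does not satisfy $p'^2=p''$ (one gets $p'^2=Ap''$), so the ``algebraic matching'' you defer to is precisely the delicate step and is not carried out; as written, the claimed conclusion $\mathcal{L}W\le -c(\kappa_1)^2+C_1\kappa_1+C_2$ is not justified under the stated hypotheses.
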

\begin{proof}
First, we shall prove that $\k_i$  is bounded from above by a positive constant. The principal curvatures of $M_t$ are the eigenvalues of $\{h_{il}g^{lj}\}$.

Define the functions
\begin{gather}\label{5.1}
	W(x,t)=\max\{h_{ij}(x,t)\xi^i\xi^j: g_{ij}(x)\xi^i\xi^j=1\},\\
	p(u)=-\log(u-\frac{1}{2}\min u),
\end{gather}
and
\begin{equation}\label{5.3}
	\theta=\log W+p(u)+N\rho,
\end{equation}
where $N$ will be chosen later. Note that
\begin{equation}\label{5.4}
	1+p'u=\frac{-\frac{1}{2}\min u}{u-\frac{1}{2}\min u}<0.
\end{equation}
We wish to bound $\theta$ from above. Thus, suppose $\theta$ attains a maximal value at $(\xi_0,t_0)\in M\times(0,T_0]$, $T_0<T^*$. Choose Riemannian normal coordinates $\{e_i\}$ in $(\xi_0,t_0)$, such that at this point we have
\begin{equation}\label{5.5}
	g_{ij}=\delta_{ij},\quad h^i_j=h_{ij}=\k_i\delta_{ij},\quad\k_1\geq\cdots\geq\k_n.
\end{equation}
Since $W$ is only continuous in general, we need to find a differentiable version instead. Set
$$\widetilde{W}=\frac{h_{ij}\tilde{\xi}^i\tilde{\xi}^j}{g_{ij}\tilde{\xi}^i\tilde{\xi}^j},$$
where $\tilde{\xi}=(\tilde{\xi}^i)=(1,0,\cdots,0).$

At $(\xi_0,t_0)$ we have
\begin{equation}\label{5.6}
	h_{11}=h^1_1=\k_1=W=\widetilde{W}
\end{equation}
and in a neighborhood of $(\xi_0,t_0)$ there holds
$$\widetilde{W}\leq W.$$
Using $h^1_1=h_{1k}g^{k1}$, we find that at $(\xi_0,t_0)$
$$\frac{d\widetilde{W}}{dt}=\frac{dh_1^1}{dt}$$
and the spatial derivatives also coincide. Replacing $\theta$ by $\widetilde{\theta}=\log \widetilde{W}+p(u)+N\widetilde\rho$, we see that $\widetilde{\theta}$ attains a maximal value at $(\xi_0,t_0)$, where $\widetilde{W}$ satisfies the same differential equation in this point as $h_1^1$. Thus, without loss of generality, we may pretend $h_1^1$ to be a scalar and $\theta$ to be given by
\begin{equation}\label{5.7}
	\theta=\log h_1^1+p(u)+N\rho.
\end{equation}
In order to calculate the evolution equations of $\theta$, we should deduce the evolution equations of $h_1^1,$ $u$ and $\rho$. Let $$J=F^{-\beta}.$$
By (\ref{3.1}), we have
\begin{equation}\label{5.8}
	\p_th^1_{1}=-u\nn_1\nn^1(GJ)-(GJ-1)h^1_1-2\rho h_{11}\nn^1(GJ)\nn^1\rho.
\end{equation}
Remark that $$J^{ij}=\frac{\p J}{\p h_{ij}},\qquad J^{ij,mn}=\frac{\p^2J}{\p h_{ij}\p h_{mn}}.$$
In order to make the following calculations easier, we define $\bar G=uG$. Then by (\ref{2.13}),
\begin{align*}
\bar G_{;11}=Gu_{;11}+2u_{;1}G_{;1}+uG_{;11}=&uG_{;11}+2u_{;1}G_{;1}+G(\nn^k\Phi\nn_kh_{11}+h_{11}-uh_{11}^2),\\
\bar G_{;1}J_{;1}=&Gu_{;1}J_{;1}+uG_{;1}J_{;1}.
\end{align*}
We can replace $G$ by $\bar G$ in (\ref{5.8}),
\begin{equation}\label{5.9}
\begin{split}
\p_th^1_{1}=&-uJG_{;11}-2uG_{;1}J_{;1}-uGJ_{;11}-(GJ-1)h^1_1-2\rho h_{11}\nn^1(GJ)\nn^1\rho\\
=&-J\bar G_{;11}-\bar GJ_{;11}-2\bar G_{;1}J_{;1}+\frac{\bar GJ}{u}\nn^k\Phi\nn_kh_{11}+h_{11}-\bar GJh_{11}^2,
\end{split}
\end{equation}
where we have used $\nn_1u=h_1^1\nn_1\Phi=\rho h_1^1\nn_1\rho$. Note that
\begin{equation}\label{5.10}
	\begin{split}
\bar G_{;k}=&d_X\bar G(e_k)+d_\nu\bar  G(\bar\nn_k\nu)=d_X\bar G(e_k)+d_\nu \bar G(h_k^ie_i),\\
\bar G_{;11}=&d_Xd_X\bar G(e_1,e_1)+2h_{11}d_\nu d_X\bar G(e_1,e_1)-h_{11}d_X\bar G(\nu)\\
&+h_{11}^2d_\nu d_\nu\bar  G(e_1,e_1)+\nn_ih_{11}d_\nu\bar  G(e_i)-h_{11}^2d_\nu\bar  G(\nu)\\
\ge&-C_{36}-C_{37}h_{11}+h_{11}^2d_\nu d_\nu\bar  G(e_1,e_1)+\nn_ih_{11}d_\nu\bar  G(e_i)-h_{11}^2d_\nu\bar  G(\nu).
	\end{split}
\end{equation}
By (\ref{2.11}) and Ricci identity, we have
\begin{equation*}
	\begin{split}
\nn_1\nn_1h_{ij}= h_{11;ij}+{R_{j11}}^ah_{ai}+{R_{j1i}}^ah_{a1}=h_{11;ij}+h_{ai}h_{aj}h_{11}-h_{ij}h_{11}^2,
	\end{split}
\end{equation*}
where we assume the indexes $i$ and $j$ are symmetric. Since
$$F_{;11}=F^{ij,kl}\nn_1h_{ij}\nn_1h_{kl}+F^{ij}\nn^2_{11}h_{ij},$$
we get
\begin{equation*}
	\begin{split}
\frac{\p h_1^1}{\p t}\le&C_{38}+C_{39}h_{11}-Jh_{11}^2d_\nu d_\nu\bar  G(e_1,e_1)-J\nn_ih_{11}d_\nu \bar G(e_i)+Jh_{11}^2d_\nu \bar G(\nu)\\
&+2\beta\(d_X\bar G(e_1)+d_\nu \bar G(h_{11}e_1)\)\frac{J}{F}F_{;1}-\bar G(-\beta\frac{J}{F}F_{;11}+\beta(\beta+1)\frac{J}{F^2}(\nn_1F)^2)\\
&+\frac{\bar GJ}{u}\nn^k\Phi\nn_kh_{11}-\bar GJh_{11}^2\\
\le&C_{38}+C_{39}h_{11}-Jh_{11}^2d_\nu d_\nu\bar  G(e_1,e_1)-J\nn_ih_{11}d_\nu \bar G(e_i)+Jh_{11}^2d_\nu \bar G(\nu)\\
&+2\beta\(d_X\bar G(e_1)+d_\nu \bar G(h_{11}e_1)\)\frac{J}{F}F_{;1}+\beta\frac{\bar GJ}{F}F^{ij,kl}\nn_1h_{ij}\nn_1h_{kl}\\
&+\beta\frac{\bar GJ}{F}F^{ij}(h_{11;ij}+h_{ai}h_{aj}h_{11}-h_{ij}h_{11}^2)-\beta(\beta+1)\frac{\bar GJ}{F^2}(\nn_1F)^2\\
&+\frac{\bar GJ}{u}\nn^k\Phi\nn_kh_{11}-\bar GJh_{11}^2\\
\le&C_{38}+C_{39}h_{11}-Jh_{11}^2d_\nu d_\nu\bar  G(e_1,e_1)-\bar GJh_{11}^2\(\beta+1-\frac{d_\nu \bar G(\nu)}{\bar G}\)\\
&-J\nn_ih_{11}(d_\nu \bar G(e_i)-\frac{\bar G}{u}\nn^i\Phi)+2\beta\(d_X\bar G(e_1)+d_\nu \bar G(h_{11}e_1)\)\frac{J}{F}F_{;1}\\
&+\beta\frac{\bar GJ}{F}F^{ij,kl}\nn_1h_{ij}\nn_1h_{kl}+\beta\frac{\bar GJ}{F}F^{ij}h_{11;ij}+\beta\frac{\bar GJ}{F}F^{ij}(h_{ij})^2h_{11}\\
&-\beta(\beta+1)\frac{\bar GJ}{F^2}(\nn_1F)^2.
 	\end{split}
\end{equation*}
Define the operator $\mathcal{L}$ by
\begin{equation}\label{5.11}
	\mathcal{L}=\p_t-\beta\frac{\bar GJ}{F}F^{ij}\nn_{ij}^2+J(d_\nu \bar G(e_i)-\frac{\bar G}{u}\nn^i\Phi)\nn_i.
\end{equation}
We claim
$$2\beta\(d_X\bar G(e_1)+d_\nu \bar G(h_{11}e_1)\)\frac{J}{F}F_{;1}-Jh_{11}^2d_\nu d_\nu\bar  G(e_1,e_1)-\beta(\beta+1)\frac{\bar GJ}{F^2}(\nn_1F)^2\le C.$$
In fact, since $uG$ is log-convex, $d_\nu d_\nu(uG)-\frac{(d_\nu(uG))^2}{uG}$ is semi-positive definite. It is easy to be proved by the following inequalities.
\begin{align*}
2\beta d_\nu \bar G(h_{11}e_1)\frac{J}{F}F_{;1}&\le\frac{(d_\nu(\bar G)(h_{11}e_1))^2}{\bar G}J+\beta^2\frac{\bar GJ}{F^2}(\nn_1F)^2,\\
&\le Jh_{11}^2d_\nu d_\nu\bar  G(e_1,e_1)+\beta^2\frac{\bar GJ}{F^2}(\nn_1F)^2,\\
2\beta d_X\bar G(e_1)\frac{J}{F}F_{;1}&\le\beta\frac{\bar GJ}{F^2}(\nn_1F)^2
+\frac{1}{\beta}\frac{(d_X\bar G(e_1))^2J}{\bar G}.
\end{align*}
Thus, by the claim we deduce
\begin{equation}\label{5.12}
	\begin{split}
\cL h_1^1\le&C_{38}+C_{39}h_{11}-\bar GJh_{11}^2\(\beta+1-\frac{d_\nu \bar G(\nu)}{\bar G}\)+\beta\frac{\bar GJ}{F}F^{ij,kl}\nn_1h_{ij}\nn_1h_{kl}\\
&+\beta\frac{\bar GJ}{F}F^{ij}(h_{ij})^2h_{11}.
	\end{split}
\end{equation}
In addition, by (\ref{3.1}), we deduce
\begin{equation}\label{5.13}
	\begin{split}
\cL u=&u(GJ-1)-u\la X,\nn(GJ)\ra-\beta\frac{\bar GJ}{F}F^{ij}\nn_{ij}^2u+J(d_\nu \bar G(e_i)-\frac{\bar G}{u}\nn^i\Phi)\nn_iu\\
=&C_{40}-uJ\nn^i\Phi\nn_iG-\bar G\nn^i\Phi\nn_iJ-\beta\frac{\bar GJ}{F}F^{ij}(g^{kl}\nabla_kh_{ij}\nabla_l\Phi+h_{ij}-(h^2)_{ij}u)\\
&+J(d_\nu \bar G(e_i)-\frac{\bar G}{u}\nn^i\Phi)\nn_iu\\
=&C_{40}-J\nn^i\Phi\nn_i\bar G+\beta\frac{u\bar GJ}{F}F^{ij}(h^2)_{ij}+Jd_\nu \bar G(e_i)\nn_iu\\
=&C_{41}+\beta\frac{u\bar GJ}{F}F^{ij}(h^2)_{ij},
	\end{split}
\end{equation}
where we have used $X=\bar\nn\Phi$ and $\nn_i\Phi h_i^ie_i=u_ie_i=\nn u$.
By (\ref{2.16}), (\ref{l2.3}) and (\ref{2.17}), we have
\begin{equation}\label{5.14}
	\begin{split}
\cL\rho=&(GJ-1)\rho-\beta\frac{\bar GJ}{F}F^{ij}(-\omega^{-1}h_{ij}+\frac{1}{\rho}g_{ij}-\frac{1}{\rho}\rho_{i}\rho_{j})+J(d_\nu \bar G(e_i)-\frac{\bar G}{u}\nn^i\Phi)\nn_i\rho\\
\le&C_{42}-\beta\frac{\bar GJ}{F\rho}F^{ij}g_{ij}+\beta\frac{\bar GJ}{F\rho}F^{ij}g_{ij}|\nn\rho|^2\\
=&C_{42}-\beta\frac{\bar GJ}{F\rho}F^{ij}g_{ij}\frac{1}{\om^2}.
	\end{split}
\end{equation}
If $\k_1$ is sufficiently large, the combination of (\ref{5.12}), (\ref{5.13}) and (\ref{5.14}) gives
\begin{equation}\label{5.15}
	\begin{split}
\cL\theta=&\frac{\cL h_1^1}{h_1^1}+\beta\frac{\bar GJ}{F}F^{ij}\nn_i(\log h_1^1)\nn_j(\log h_1^1)+p'\cL u-p''\beta\frac{\bar GJ}{F}F^{ij}\nn_iu\nn_ju\\
&+N\cL\rho\\
\le&C_{43}-\bar GJh_{11}\(\beta+1-\frac{d_\nu \bar G(\nu)}{\bar G}\)+\beta\frac{\bar GJ}{Fh_{11}}F^{ij,kl}\nn_1h_{ij}\nn_1h_{kl}\\
&+\beta\frac{\bar GJ}{F}F^{ij}(\nn_i(\log h_1^1)\nn_j(\log h_1^1)-p''\nn_iu\nn_ju)+(p'u+1)\beta\frac{\bar GJ}{F}F^{ij}(h^2)_{ij}\\
&-N\beta\frac{\bar GJ}{F\rho}F^{ij}g_{ij}\frac{1}{\om^2}.
\end{split}
\end{equation}
Due to the concavity of $F$ it holds that
\begin{equation}\label{5.16}
F^{kl,rs}\xi_{kl}\xi_{rs}\leq\sum_{k\neq l}\frac{F^{kk}-F^{ll}}{\k_k-\k_l}\xi_{kl}^2\leq\frac{2}{\k_1-\k_n}\sum_{k=1}^{n}(F^{11}-F^{kk})\xi_{1k}^2
\end{equation}
for all symmetric matrices $(\xi_{kl})$; cf. \cite{GC2}. Furthermore, we have
\begin{equation}\label{5.17}
F^{11}\leq\cdots\leq F^{nn};
\end{equation}
cf. \cite{EH}. In order to estimate (\ref{5.15}), we distinguish between two cases.

Case 1: $\k_n<-\eps_1\k_1$, $0<\eps_1<\frac{1}{2}$. Then
\begin{equation}\label{5.18}
F^{ij}(h^2)_{ij}\geq F^{nn}\k_n^2\geq\frac{1}{n}F^{ij}g_{ij}\k_n^2\geq\frac{1}{n}F^{ij}g_{ij}\eps_1^2\k_1^2.
\end{equation}
We use $\nn\theta=0$ to obtain
\begin{equation}\label{5.19}
F^{ij}\nn_i(\log h_1^1)\nn_j(\log h_1^1)=p'^2F^{ij}u_{;i}u_{;j}+2Np'F^{ij}u_{;i}\rho_{;j}+N^2F^{ij}\rho_{;i}\rho_{;j}.
\end{equation}
In this case, the concavity of $f$ implies that
\begin{equation}\label{5.20}
\beta\frac{\bar GJ}{Fh_{11}}F^{ij,mn}h_{ij;1}h_{mn;1}\leq0.
\end{equation}
By (\ref{2.13}) and note that $p'<0$, we have
\begin{equation}\label{5.21}
	\begin{split}
2N\beta p'\frac{\bar GJ}{F}F^{ij}u_{;i}\rho_{;j}=&2N\beta p'\rho\frac{\bar GJ}{F}F^{ij}\k_i\rho_{;i}\rho_{;j}.	
\end{split}
\end{equation}
By \cite{UJ} Lemma 3.3, we know $\frac{H}{n}\geq \frac{F}{F(1,\cdots,1)}\geq C_{44}$, where $H=\sum_{i}^{n}\k_i$ is the mean curvature. Thus $(n-1)\k_1+\k_n\ge H\ge C_{44}$. We can derive $\k_i\ge\k_n\ge C_{44}-(n-1)\k_1$. For fixed $i$, if $\k_i\geq0$, we derive $$2N\beta p'\rho\frac{\bar GJ}{F}F^{ij}\k_i\rho_{;i}\rho_{;j}\leq0.$$ If $\k_i<0$, we have
$$2N\beta p'\rho\frac{\bar GJ}{F}F^{ij}\k_i\rho_{;i}\rho_{;j}\leq2N\beta p'\rho\frac{\bar GJ}{F}\k_iF^{ij}g_{ij} \leq C_{45}(C_{44}-(n-1)\k_1)F^{ij}g_{ij},$$
 where we have used $F^{ij}\rho_{;i}\rho_{;j}\leq F^{ij}g_{ij}\vert\nn\rho\vert^2\leq F^{ij}g_{ij}$.

Without loss of generality, we can assume that $\k_k>0$ and $\k_{k+1}\le0$, then
\begin{equation}\label{5.22}
2N\beta p'\rho\frac{\bar GJ}{F}F^{ij}\k_i\rho_{;i}\rho_{;j}\leq(n-k)C_{45}(C_{44}-(n-1)\k_1)F^{ij}g_{ij}.
\end{equation}

Since $p'^2=p''$ and $1+p'u<0$, by the combination of (\ref{l2.3}), (\ref{5.18}), (\ref{5.19}), (\ref{5.20}) and (\ref{5.22}), in this case (\ref{5.15}) becomes
\begin{equation}\label{5.23}
	\begin{split}
\cL\theta\leq\beta\frac{\bar GJ}{F} F^{ij}g_{ij}\(\frac{1}{n}\eps_1^2\k_1^2(1+p'u)+C_{46}\k_1+C_{47}\)-\bar GJh_{11}\(\beta+1-\frac{d_\nu \bar G(\nu)}{\bar G}\)+C_{48},
	\end{split}
	\end{equation}
which is negative for large $\k_1$. We also use $\beta+1\ge\frac{d_\nu \bar G(\nu)}{\bar G}$ and $F^{ij}g_{ij}\ge F(1,\cdots,1)$ here since $F$ is concave.

Case 2: $\k_n\geq-\eps_1\k_1$. Then
$$\frac{2}{\k_1-\k_n}\sum_{k=1}^{n}(F^{11}-F^{kk})(h_{11;k})^2k_1^{-1}\leq\frac{2}{1+\eps_1}\sum_{k=1}^{n}(F^{11}-F^{kk})(h_{11;k})^2k_1^{-2}.$$
We deduce further
\begin{equation}\label{5.24}
	\begin{split}
F^{ij}&\nn_i(\log h_1^1)\nn_j(\log h_1^1)+\frac{2}{\k_1-\k_n}\sum_{k=1}^{n}(F^{11}-F^{kk})(h_{11;k})^2k_1^{-1}\\
\leq&\frac{2}{1+\eps_1}\sum_{k=1}^{n}F^{11}(\log h_1^1)_{;k}^2-\frac{1-\eps_1}{1+\eps_1}\sum_{k=1}^{n}F^{kk}(\log h_1^1)_{;k}^2\\
\leq&\sum_{k=1}^{n}F^{11}(\log h_1^1)_{;k}^2\\
\leq&F^{11}(p'^2\vert\nn u\vert^2+2Np'<\nn u,\nn\rho>+N^2\vert\nn\rho\vert^2),
	\end{split}
\end{equation}
where we have used $F^{kk}\geq F^{11}$ in the second inequality.
Note that
\begin{gather}
\beta(1+p'u)\frac{\bar GJ}{F}F^{ij}(h^2)_{ij}\leq\beta(1+p'u)\frac{\bar GJ}{F}F^{11}\k_1^2,\label{5.25}\\
2Np'F^{11}<\nn u,\nn\rho>=2Np'\rho F^{11}\k_i\rho_{;i}^2\leq-2\eps_1Np'\rho F^{11}\rho_{;i}^2\k_1,\label{5.26}\\
-\beta\frac{\bar GJ}{F}p''F^{ij}\nn_iu\nn_ju+\beta\frac{\bar GJ}{F}p''F^{11}\vert\nn u\vert^2\leq0.\label{5.27}
\end{gather}
By the combination of (\ref{5.24})-(\ref{5.27}),  (\ref{5.15}) becomes
\begin{equation}\label{3.38}
\begin{split}
\cL\theta\leq& C_{49}-\bar GJh_{11}\(\beta+1-\frac{d_\nu \bar G(\nu)}{\bar G}\)+\beta\frac{\bar GJ}{F}F^{11}\((1+p'u)k_1^2+C_{50}\k_1+C_{51}\)\\
&-\beta \frac{\bar GJ}{F}F^{ij}g_{ij}N\frac{1}{\rho\omega^2},
\end{split}
\end{equation}
which is negative for large $\k_1$ after fixing $N_0$ large enough to ensure that
$$\beta F(1,\cdots,1) \frac{\bar GJ}{F}N_0\frac{1}{\rho\omega^2}\geq C_{49}$$
by $\beta+1\ge\frac{d_\nu \bar G(\nu)}{\bar G}$ and $F^{ij}g_{ij}\ge F(1,\cdots,1)$.
Hence in this case any $N\geq N_0$ yields an upper bound for $\k_1$.

In conclusion, $\k_i\leq C_{35}$, where $C_{35}$ depends on the initial hypersurface, $\Vert G\Vert_{C^0}$, $\Vert G\Vert_{C^1}$, $\Vert G\Vert_{C^2}$ and $\beta$. Since $F$ is uniformly continuous on the convex cone $\overline{\Gamma}$, and $F$ is bounded from below by a positive constant. Corollary \ref{c4.10} and Assumption \ref{a1.3} imply that $\k_i$ remains in a fixed compact subset of $\Gamma$, which is independent of $t$.
\end{proof}
Before analyzing the case of $\Psi(s)=-s^{-1}$ and $F=\sigma_{k}^\frac{1}{k}$, we first state a crucial lemma (see \cite{GLL} Lemma 3.2).
\begin{lemma}\cite{GLL}\label{l5.2}
If $\{h_j^i\}\in\Gamma_k^+$ and $k\ge2$, we have the following inequality.
\begin{align*}
\sigma_{k}^{pq,rs}\nn_ih_{pq}\nn_ih_{rs}\le-\sigma_{k}\(\frac{\nn_i\sigma_{k}}{\sigma_{k}}-\frac{\nn_iH}{H}\)\((\frac{2-k}{k-1})\frac{\nn_i\sigma_{k}}{\sigma_{k}}-(\frac{k}{k-1})\frac{\nn_iH}{H}\),
\end{align*}
where $H=\sum k_i$ is the mean curvature.
\end{lemma}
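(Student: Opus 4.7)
The plan is to deduce the inequality from the concavity of the degree-one homogeneous function $F := (\sigma_k/\sigma_1)^{1/(k-1)}$ on $\Gamma_k^+$. That $F$ is concave on $\Gamma_k^+$ for $k\ge 2$ is a classical consequence of G\aa{}rding's theory of hyperbolic polynomials combined with the Maclaurin inequalities, and this is the main nontrivial input. Granting it, the remainder of the derivation is algebra.

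The concrete steps would be as follows. After rotating so that $h^i_j$ is diagonal with eigenvalues $\kappa_1\ge\cdots\ge\kappa_n$, for each fixed index $i$ the tensor $\xi_{pq}:=\nabla_i h_{pq}$ is symmetric in $(p,q)$ by the Codazzi equation, so it may be inserted into the symmetric bilinear form $F^{pq,rs}$. Using $F>0$ and the algebraic identity
\[
\frac{F^{pq,rs}}{F} \;=\; (\log F)^{pq,rs} + (\log F)^{pq}(\log F)^{rs},
\]
concavity $F^{pq,rs}\xi_{pq}\xi_{rs}\le 0$ becomes
\[
(\log F)^{pq,rs}\xi_{pq}\xi_{rs} \;\le\; -\bigl[(\log F)^{pq}\xi_{pq}\bigr]^{2}.
\]

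Now substitute $\log F = \tfrac{1}{k-1}(\log\sigma_k - \log H)$. Since $H=\sigma_1$ is linear in $h$, one has $H^{pq,rs}=0$ and therefore $(\log H)^{pq,rs}\xi_{pq}\xi_{rs} = -(\nabla_i H/H)^{2}$. Writing $a:=\nabla_i\sigma_k/\sigma_k$ and $b:=\nabla_i H/H$, the previous display reads
\[
\frac{1}{k-1}\!\left(\frac{\sigma_k^{pq,rs}\xi_{pq}\xi_{rs}}{\sigma_k} - a^{2} + b^{2}\right) \;\le\; -\frac{1}{(k-1)^{2}}(a-b)^{2}.
\]
Multiplying by $(k-1)\sigma_k$ and rearranging yields
\[
\sigma_k^{pq,rs}\xi_{pq}\xi_{rs} \;\le\; \frac{\sigma_k}{k-1}\bigl[(k-2)a^{2}+2ab-kb^{2}\bigr],
\]
and a direct factorization shows the right-hand side equals $-\sigma_k(a-b)\bigl(\tfrac{2-k}{k-1}a - \tfrac{k}{k-1}b\bigr)$, which is exactly the stated bound.

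The main obstacle is really the concavity of $(\sigma_k/\sigma_1)^{1/(k-1)}$ on $\Gamma_k^+$; once that is granted, the derivation is purely algebraic. A secondary technical point is that the bilinear form $F^{pq,rs}\xi_{pq}\xi_{rs}$ contains both the ``diagonal'' Hessian entries $\partial^{2}\sigma_k/\partial\kappa_p\partial\kappa_r$ (for $p\ne r$) and the ``off-diagonal'' contributions $(\sigma_{k-1;p}-\sigma_{k-1;r})/(\kappa_p-\kappa_r)$ coming from eigenvalue perturbation, and one must verify that plugging the Codazzi-symmetric tensor $\nabla_i h$ into the concavity inequality is legitimate; this is precisely what the quadratic-form statement of concavity on symmetric matrices delivers, so no extra work is needed beyond noting that $\nabla_i h_{pq}$ is genuinely a symmetric matrix in $(p,q)$.
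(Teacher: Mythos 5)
Your proof is correct and follows essentially the same route as the argument the paper invokes through its citation to \cite{GLL}: the inequality there is likewise obtained by applying the concavity of $(\sigma_k/\sigma_1)^{1/(k-1)}$ on $\Gamma_k^+$ to the Codazzi-symmetric tensor $\nabla_i h$ and expanding the resulting quadratic-form inequality in terms of $\nabla_i\sigma_k/\sigma_k$ and $\nabla_i H/H$. Your algebraic reduction reproduces the stated bound exactly, so there is nothing to add.
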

\begin{lemma}\label{l5.3}
	Let $X(\cdot,t)$ be a smooth, closed and star-shaped solution to the flow (\ref{1.10}) which encloses the origin for $t\in[0,T)$. If $F=\sigma_{k}^\frac{1}{k}$, $\vert D\rho\vert$ has an upper bound and $\rho$, $F$ has positive lower and upper bounds, $\frac{d_\nu \wg(\nu)}{\wg}+\beta\ge0$, and either (1) $u\widetilde{G}$ is log-concave with respect to $\nu$ for $\beta\ge k$, or (2) $d_\nu d_\nu(u\wg)-\frac{\beta}{\beta-1}\frac{(d_\nu(u\wg))^2}{u\wg}$ is semi-negative definite for $0<\beta<1$,
	then there is a  positive constant $C_{52}$ depending on the initial hypersurface, $\Vert \wg\Vert_{C^0}$, $\Vert \wg\Vert_{C^1}$, $\Vert \wg\Vert_{C^2}$ and $\beta$,  such that
	the principal curvatures of $X(\cdot,t)$ are uniformly bounded from above $$\k_i(\cdot,t)\leq C_{52} \text{ \qquad } \forall1\le i\le n,$$
	and hence, are compactly contained in $\Gamma$, in view of Corollary \ref{c4.11}.
\end{lemma}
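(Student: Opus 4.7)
The plan is to mirror the argument of Lemma \ref{l5.1} with three modifications dictated by flow (\ref{1.10}): the speed $\mathscr{F}=1-\wg F^\beta$ has reversed sign compared to $GF^{-\beta}-1$, the exponent $J=F^\beta$ replaces $F^{-\beta}$, and for $F=\sigma_k^{1/k}$ the Guan-Li-Li inequality (Lemma \ref{l5.2}) is available to handle the $F^{pq,rs}$-term. I take the same auxiliary function $\theta=\log h_1^1+p(u)+N\rho$ with $p(u)=-\log(u-\tfrac12\min u)$ and, at a spatial maximum $(\xi_0,t_0)$, work in a normal frame diagonalizing $h^i_j$ with $\kappa_1\geq\cdots\geq\kappa_n$. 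Setting $\bar G=u\wg$ and $Q=\wg J$, Lemma \ref{l3.3} applied to $\mathscr{F}=1-Q$, followed by the same $u\wg\leftrightarrow\bar G$ substitution as in the derivation of (\ref{5.9}) (the terms $-2u_{;1}\nabla_1 Q$ and $2\rho h_{11}\nabla^1 Q\nabla^1\rho$ cancel as before), yields
$$
\partial_t h_1^1=\bar G J_{;11}+J\bar G_{;11}+2J_{;1}\bar G_{;1}-\tfrac{\bar GJ}{u}\nabla^k\Phi\,\nabla_k h_{11}+\bar GJh_{11}^2-h_{11}.
$$
Expanding $J_{;11}$ for $J=F^\beta$ produces coefficient $\beta(\beta-1)$ in front of $(F_{;1})^2/F^2$, reversed in sign from the $-\beta(\beta+1)$ of Lemma \ref{l5.1}; this explains why \emph{log-concavity} of $u\wg$, rather than log-convexity, is the natural assumption.

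Next I define $\cL=\partial_t-\beta\tfrac{\bar GJ}{F}F^{ij}\nabla^2_{ij}-J\bigl(d_\nu\bar G(e_i)-\tfrac{\bar G}{u}\nabla^i\Phi\bigr)\nabla_i$, with sign chosen so that the $\nabla_i h_{11}$ contributions from $J\bar G_{;11}$ and from $-\tfrac{\bar GJ}{u}\nabla^k\Phi\,\nabla_k h_{11}$ cancel. Expanding $\bar G_{;11}$ as in (\ref{5.10}) and using $d_\nu\bar G(\nu)=\bar G(1+d_\nu\wg(\nu)/\wg)$ together with the standing hypothesis $d_\nu\wg(\nu)/\wg+\beta\geq0$, the pure $h_{11}^2$ contributions outside the quadratic-form block collapse to the good negative expression $-\bar GJh_{11}^2\bigl(\beta+d_\nu\wg(\nu)/\wg\bigr)\leq0$. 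The remaining obstacles are the concavity term $\beta\tfrac{\bar GJ}{F}F^{pq,rs}\nabla_1 h_{pq}\nabla_1 h_{rs}$ and the quadratic form
$$
\cB:=Jh_{11}^2\,d_\nu d_\nu\bar G(e_1,e_1)+\beta(\beta-1)\tfrac{\bar GJ}{F^2}(F_{;1})^2+2\beta\tfrac{JF_{;1}}{F}h_{11}d_\nu\bar G(e_1).
$$
In case (2), $0<\beta<1$: since $\beta(\beta-1)<0$, hypothesis (2) $d_\nu d_\nu\bar G(e_1,e_1)\leq\tfrac{\beta}{\beta-1}(d_\nu\bar G(e_1))^2/\bar G$ makes the resulting quadratic form in $(h_{11}d_\nu\bar G(e_1),F_{;1})$ have both diagonal coefficients negative and discriminant $\beta^2-\tfrac{\beta}{\beta-1}\cdot\beta(\beta-1)=0$, so it is a negative perfect square and $\cB\leq0$ directly. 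In case (1), $\beta\geq k\geq 2$: hypothesis (1), log-concavity of $\bar G$ in $\nu$, reduces $\cB$ to an indefinite form with coefficients bounded by Corollary \ref{c4.11} and $\Vert\wg\Vert_{C^1}$, hence $\cB\leq C\bigl(h_{11}^2+(F_{;1})^2\bigr)$, to be absorbed in the case split below.

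For the $F^{pq,rs}$-term I invoke Lemma \ref{l5.2}, which replaces $\sigma_k^{pq,rs}\nabla_1h_{pq}\nabla_1 h_{rs}$ by gradient-squared expressions in $\sigma_k$ and $H$; combined with the $\beta\tfrac{\bar GJ}{F}F^{ij}\nabla_i(\log h_1^1)\nabla_j(\log h_1^1)$ term from $\cL\log h_1^1$ and with the critical-point identity $\nabla_ih_{11}=-h_{11}(p'\kappa_i\rho+N)\rho_{;i}$, one reduces to the same two-case dichotomy as in Lemma \ref{l5.1}. In Case I ($\kappa_n<-\epsilon_1\kappa_1$) the bound $F^{ij}(h^2)_{ij}\geq\tfrac1n\epsilon_1^2\kappa_1^2F^{ij}g_{ij}$, together with $1+p'u<0$, supplies the dominating negative $O(\kappa_1^2)$ term; in Case II ($\kappa_n\geq-\epsilon_1\kappa_1$) the concavity inequality (\ref{5.16}), the ordering $F^{11}\leq\cdots\leq F^{nn}$, and the contribution $-N\beta\tfrac{\bar GJ}{F\rho\omega^2}F^{ij}g_{ij}$ from $\cL\rho$ do the same job once $N$ is chosen large. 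In both cases $\cL\theta<0$ at the maximum as soon as $\kappa_1$ is large, producing $\kappa_1\leq C_{52}$. The principal obstacle is the absorption step in case (1): one must verify that the residual bound $\cB\leq C(h_{11}^2+(F_{;1})^2)$, where $F_{;1}$ is linked via Codazzi and the critical-point identity to $\nabla h_{11}$ and $\nabla\rho$, is genuinely dominated by the $O(\kappa_1^2)$ negative contribution once $N$ is sufficiently large, without reintroducing a problematic $\kappa_1^2$ contribution from $(F_{;1})^2$. Case (2) is comparatively direct because hypothesis (2) was calibrated precisely to make $\cB\leq0$ unconditionally.
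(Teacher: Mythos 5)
Your plan transplants the auxiliary function of Lemma \ref{l5.1}, $\theta=\log h_1^1+p(u)+N\rho$, to the contracting flow, but this is exactly where the argument breaks, and at the step you yourself flag as ``the principal obstacle.'' For $J=F^{\beta}$ the second derivative produces $+\beta(\beta-1)\frac{\hat GJ}{F^2}(F_{;1})^2$ with $\hat G=u\wg$, which for $\beta\ge k\ge2$ is a \emph{positive} term in a third-order quantity: $F_{;1}=\sum_iF^{ii}\nn_1h_{ii}$ involves $\nn_1h_{ii}$ for all $i$, and the critical-point identity of your test function only controls $\nn h_{11}$, not $\nn H$ or $\nn\sigma_k$. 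Hence $(F_{;1})^2$ is not comparable to $\k_1^2$ and cannot be ``dominated by the $O(\k_1^2)$ negative contribution'' no matter how large $N$ is; likewise the log-concavity hypothesis in case (1) only gives $d_\nu d_\nu\hat G\le(d_\nu\hat G)^2/\hat G$, so your form $\cB$ has positive diagonal entries and is simply unbounded in the gradient variables. The only good quadratic gradient term available, the concavity term $\beta\frac{\hat GJ}{F}F^{pq,rs}\nn_1h_{pq}\nn_1h_{rs}\le0$, vanishes in the radial direction and so cannot absorb $(F_{;1})^2$; and if instead you keep $J^{pq,rs}$ together and invoke Lemma \ref{l5.2}, its favorable term is $-\frac{\beta J}{k-1}\frac{|\nn H|^2}{H^2}$, which is useless at a maximum of your $\theta$ because nothing there relates $\nn_1H$ to $\nn_1J$. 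Even your ``easy'' case (2) has a hole: the hypothesis is exactly borderline, the perfect square you exhibit is degenerate (discriminant zero), so the cross term $2\beta\frac{J}{F}d_X\hat G(e_1)F_{;1}$, which you dropped from $\cB$, has no remaining negative $(F_{;1})^2$ to be absorbed into, and a degenerate negative square plus an uncontrolled linear term is unbounded above.

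This is precisely why the paper abandons the $\log h_1^1$-type function for flow (\ref{1.10}) and instead (following Li--Xu--Zhang) bounds the mean curvature via the quotient $\theta=\frac{H}{\hat GJ-a}$, $a=\frac12\inf\hat GJ$, using $H^2>|A|^2$ on $\Gamma_k^+$, $k\ge2$. The whole point of that choice is the critical-point identity (\ref{5.35}), $\frac{\nn_iH}{H}=\frac{P}{P-a}\bigl(\frac{\nn_iJ}{J}+\frac{d_X\hat G(e_i)}{\hat G}+\frac{d_\nu\hat G(h_{ii}e_i)}{\hat G}\bigr)$, which converts the good $-|\nn H|^2/H^2$ term from Lemma \ref{l5.2} into a negative multiple of $\bigl(\frac{\nn_iJ}{J}+\cdots\bigr)^2$; the factor $\frac{P}{P-a}>1$ then makes the full quadratic form in $\frac{\nn J}{J}$ and $\frac{d_\nu\hat G(h_{ii}e_i)}{\hat G}$ strictly controllable, and it is exactly here that the hypotheses ``$u\wg$ log-concave, $\beta\ge k$'' or ``$d_\nu d_\nu(u\wg)-\frac{\beta}{\beta-1}\frac{(d_\nu(u\wg))^2}{u\wg}\le0$, $0<\beta<1$'' enter, yielding the claim $\Rmnum{1}\le C+CH$ with a strict margin (provided by $a$) that also swallows the $d_X\hat G\cdot\nn J$ terms. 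The final good term $-CH^2$ then comes from $\cL\theta=\frac{\cL H}{P-a}-\frac{H\cL P}{(P-a)^2}$ together with $\sigma_k^{ij}(h_{ij})^2\ge\frac{k}{n^2}\sigma_kH$, not from the $(1+p'u)$ mechanism you propose. So the proposal is not a repair-by-routine-verification away from a proof: the test function itself must be changed.
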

\begin{proof}
As $\{h_i^j\}\in\Gamma_k^+$, $k\ge2$, we have $H>0$, $\sigma_{2}>0$, then
$$H^2=2\sigma_{2}+|A|^2>|A|^2.$$
So we only need to derive the upper bound of $H$ to get the upper bound of $\k_i$.

We modify the auxiliary function posed in \cite{LXZ},
$$\theta=\frac{H}{\hat{G}J-a},$$
where $\hat G=u\widetilde{G}$, $J=F^{\beta}$ and $a=\frac{1}{2}\inf_{M\times[0,T)}\hat GJ$. Let $P=\hat{G}J$. By Lemma \ref{l3.3} we have
\begin{equation}\label{5.29}
	\p_th^i_{j}=u\nn_j\nn^i(\tilde GJ)+(\tilde GJ-1)h^i_j+2 h^i_{l}\nn^l(GJ)\nn_j\Phi.
\end{equation}
Similar to the proof of Lemma \ref{l5.1},  Choose Riemannian normal coordinates $\{e_i\}$ in $(\xi_0,t_0)$, such that in this point we have
\begin{equation*}
	g_{ij}=\delta_{ij},\quad h^i_j=h_{ij}=\k_i\delta_{ij},\quad\k_1\geq\cdots\geq\k_n.
\end{equation*}
We have
\begin{align*}
\Delta\hat G=&u\Delta\widetilde G+2u_{;i}\widetilde G_{;i}+\widetilde  G(\nn^k\Phi\nn_kH+H-u|A|^2),\\
	\hat G_{;i}J_{;i}=&\widetilde Gu_{;i}J_{;i}+u\widetilde G_{;i}J_{;i}.
\end{align*}
Thus we can derive
\begin{equation}\label{5.30}
	\begin{split}
		\p_tH=&uJ\Delta\widetilde G+2u\widetilde G_{;i}J_{;i}+u\widetilde G\Delta J+(\widetilde GJ-1)H+2h_{ii}\nn^i(\widetilde GJ)\nn^i\Phi\\
		=&J\Delta\hat G-\frac{\hat GJ}{u}\nn^k\Phi\nn_kH+\hat GJ|A|^2+2\hat G_{;i}J_{;i}+\hat G\Delta J-H,
	\end{split}
\end{equation}
where we have used $\nn_iu=h_i^i\nn_i\Phi$, $|A|^2=h_j^ih^j_i$. Note that
\begin{equation}\label{5.31}
	\begin{split}
		\hat G_{;k}=&d_X\hat G(e_k)+d_\nu\hat G(\bar\nn_k\nu)=d_X\hat  G(e_k)+d_\nu \hat G(h_k^ie_i),\\
\Delta\hat G=&d_Xd_X\hat G(e_k,e_k)+2h_{kk}d_\nu d_X\hat  G(e_k,e_k)-Hd_X\hat G(\nu)\\
		&+h_{kk}^2d_\nu d_\nu\hat G(e_k,e_k)+\nn_kHd_\nu\hat  G(e_k)-|A|^2d_\nu\hat  G(\nu)\\
		\le&C_{53}+C_{54}H+h_{kk}^2d_\nu d_\nu\hat  G(e_k,e_k)+\nn_kHd_\nu\hat G(e_k)-|A|^2d_\nu\hat G(\nu).
	\end{split}
\end{equation}
By (\ref{2.11}) and Ricci identity, we have
\begin{equation*}
	\begin{split}
\Delta h_{ij}= H_{;ij}+{R_{jkk}}^ah_{ai}+{R_{jki}}^ah_{ak}=H_{;ij}+h_{ai}h_{aj}H-h_{ij}|A|^2,
	\end{split}
\end{equation*}
where we assume the indexes $i$ and $j$ are symmetric. Since
$$\Delta J=J^{pq,rs}\nn_ih_{pq}\nn_ih_{rs}+J^{pq}\Delta h_{pq}=J^{pq,rs}\nn_ih_{pq}\nn_ih_{rs}+J^{pq}(H_{;ij}+h_{ai}h_{aj}H-h_{ij}|A|^2),$$
we get
\begin{equation*}
	\begin{split}
\frac{\p H}{\p t}\le&C_{53}+C_{54}H+Jh_{kk}^2d_\nu d_\nu\hat G(e_k,e_k)+J\nn_kHd_\nu \hat G(e_k)-J|A|^2d_\nu \hat G(\nu)\\
&+2\(d_X\hat G(e_k)+d_\nu \hat G(h_{kk}e_k)\)J_{;k}+\hat G(J^{pq,rs}\nn_ih_{pq}\nn_ih_{rs}+J^{pq}\Delta h_{pq})\\
&-\frac{\hat GJ}{u}\nn^k\Phi\nn_kH+\hat GJ|A|^2.
	\end{split}
\end{equation*}
Define the operator $\mathcal{L}$ by
\begin{equation}\label{5.32}
	\mathcal{L}=\p_t-\frac{\beta}{k}\frac{\hat GJ}{\sigma_{k}}\sigma_{k}^{ij}\nn_{ij}^2-J(d_\nu \hat G(e_i)-\frac{\hat G}{u}\nn^i\Phi)\nn_i.
\end{equation}
Thus, we can deduce
\begin{equation}\label{5.33}
	\begin{split}
		\cL H\le&C_{53}+C_{54}H+Jh_{kk}^2d_\nu d_\nu\hat G(e_k,e_k)-\hat GJ|A|^2\(\beta-1+\frac{d_\nu \hat G(\nu)}{\hat G}\)\\
		&+2\(d_X\hat G(e_k)+d_\nu \hat G(h_{kk}e_k)\)J_{;k}+\hat GJ^{pq,rs}\nn_ih_{pq}\nn_ih_{rs}
		+\frac{\beta}{k}\frac{\hat GJ}{\sigma_{k}}\sigma_{k}^{ij}(h_{ij})^2H.
	\end{split}
\end{equation}
Using Lemma \ref{l5.2}, we obtain
\begin{equation}\label{5.34}
\begin{split}
J^{pq,rs}\nn_ih_{pq}\nn_ih_{rs}\le&-\frac{\beta}{k}J\(\frac{k}{\beta}\frac{\nn_iJ}{J}-\frac{\nn_iH}{H}\)\(\frac{2-k}{k-1}\frac{k}{\beta}\frac{\nn_iJ}{J}-\frac{k}{k-1}\frac{\nn_iH}{H}\)\\
&-\(\frac{k}{\beta}-1\)\frac{|\nn J|^2}{J}\\
=&\(1-\frac{k}{\beta(k-1)}\)\frac{|\nn J|^2}{J}+\frac{2}{k-1}\nn_iJ\frac{\nn_iH}{H}-\frac{\beta J}{k-1}\frac{|\nn H|^2}{H^2}.
\end{split}
\end{equation}
At the maximum point of $\theta$ on $M_t$, we have
\begin{equation}\label{5.35}
\frac{\nn_iH}{H}=\frac{\nn_iP}{P-a}=\frac{P}{P-a}\(\frac{\nn_iJ}{J}+\frac{d_X\hat G(e_i)}{\hat G}+\frac{d_\nu\hat G(h_{ii}e_i)}{\hat G}\).
\end{equation}
We claim that
\begin{align*}
\Rmnum{1}=&\frac{\hat G}{P}J^{pq,rs}\nn_ih_{pq}\nn_ih_{rs}+\frac{ J}{P}h_{kk}^2d_\nu d_\nu\hat G(e_k,e_k)+\frac{2}{P}\(d_X\hat G(e_k)+d_\nu \hat G(h_{kk}e_k)\)J_{;k}\\
\le& C+CH.
\end{align*}
In fact,

(1) If $u\widetilde{G}$ is log-concave, we have
$$d_\nu d_\nu\hat G(e_k,e_k)\le \frac{(d_\nu\hat G(e_k))^2}{\hat G}.$$
Inserting (\ref{5.35}) into (\ref{5.34}),
\begin{equation}
\begin{split}
\Rmnum{1}
\le&(1-\frac{k}{\beta(k-1)})\frac{|\nn J|^2}{J^2}+\frac{2}{k-1}\frac{\nn_iJ}{J}\frac{\nn_iH}{H}-\frac{\beta }{k-1}\frac{|\nn H|^2}{H^2}+\frac{(d_\nu\hat G(h_{ii}e_i))^2}{\hat G^2}\\
&+C\frac{|\nn J|}{J}+2\frac{d_\nu\hat G(h_{ii}e_i)}{\hat G}\frac{\nn_iJ}{J}\\
=&\(1-\frac{k}{\beta(k-1)}+\frac{2}{k-1}\frac{P}{P-a}-\frac{\beta}{k-1}(\frac{P}{P-a})^2\)\frac{|\nn J|^2}{J^2}\\
&+2\frac{d_\nu\hat G(h_{ii}e_i)}{\hat G}\frac{\nn_iJ}{J}\(1+\frac{1}{k-1}\frac{P}{P-a}-\frac{\beta}{k-1}(\frac{P}{P-a})^2\)\\
&+\(1-\frac{\beta}{k-1}(\frac{P}{P-a})^2\)\(\frac{d_\nu\hat G(h_{ii}e_i)}{\hat G}\)^2+C\frac{|\nn J|}{J}+CH.
\end{split}
\end{equation}
Note that
\begin{align*}
1-\frac{\beta}{k-1}(\frac{P}{P-a})^2<0
\end{align*}
by $\beta\ge k$ and $\frac{P}{P-a}>1$. We rewrite
\begin{align*}
\Rmnum{1}=&\(1-\frac{\beta}{k-1}(\frac{P}{P-a})^2\)\(\frac{d_\nu\hat G(h_{ii}e_i)}{\hat G}+\dfrac{1+\frac{1}{k-1}\frac{P}{P-a}-\frac{\beta}{k-1}(\frac{P}{P-a})^2}{1-\frac{\beta}{k-1}(\frac{P}{P-a})^2}\frac{\nn_iJ}{J}\)^2\\
&-\dfrac{\(1+\frac{1}{k-1}\frac{P}{P-a}-\frac{\beta}{k-1}(\frac{P}{P-a})^2\)^2}{1-\frac{\beta}{k-1}(\frac{P}{P-a})^2}\frac{|\nn J|^2}{J^2}\\
&+\(1-\frac{k}{\beta(k-1)}+\frac{2}{k-1}\frac{P}{P-a}-\frac{\beta}{k-1}(\frac{P}{P-a})^2\)\frac{|\nn J|^2}{J^2}+C\frac{|\nn J|}{J}\\
\le&-\frac{\frac{1}{k-1}\(\frac{k}{\beta}-(\frac{P}{P-a})^2\)}{1-\frac{\beta}{k-1}(\frac{P}{P-a})^2}\frac{|\nn J|^2}{J^2}+C\frac{|\nn J|}{J}+CH\\
\le& C_{55}+CH,
\end{align*}
where we have used Cauchy-Schwartz inequality, $\frac{k}{\beta}-(\frac{P}{P-a})^2<0$, $1-\frac{\beta}{k-1}(\frac{P}{P-a})^2<0$  in the last inequality since $\beta\ge k$ and $\frac{P}{P-a}>1$.

(2) If $d_\nu d_\nu(u\wg)-\frac{\beta}{\beta-1}\frac{(d_\nu(u\wg))^2}{u\wg}$ is semi-negative definite, we have
$$d_\nu d_\nu\hat{G}\le\frac{\beta}{\beta-1}\frac{(d_\nu\hat{G})^2}{\hat{G}}.$$
Inserting (\ref{5.35}) into (\ref{5.34}),
\begin{equation*}
	\begin{split}
		\Rmnum{1}
		\le&\(1-\frac{k}{\beta(k-1)}+\frac{2}{k-1}\frac{P}{P-a}-\frac{\beta}{k-1}(\frac{P}{P-a})^2\)\frac{|\nn J|^2}{J^2}\\
		&+2\frac{d_\nu\hat G(h_{ii}e_i)}{\hat G}\frac{\nn_iJ}{J}\(1+\frac{1}{k-1}\frac{P}{P-a}-\frac{\beta}{k-1}(\frac{P}{P-a})^2\)\\
		&+\(\frac{\beta}{\beta-1}-\frac{\beta}{k-1}(\frac{P}{P-a})^2\)\(\frac{d_\nu\hat G(h_{ii}e_i)}{\hat G}\)^2+C\frac{|\nn J|}{J}+CH.
	\end{split}
\end{equation*}
Note that
\begin{align*}
\frac{\beta}{\beta-1}-\frac{\beta}{k-1}(\frac{P}{P-a})^2<0
\end{align*}
by $\beta<1$ and $\frac{P}{P-a}>1$. We rewrite
\begin{align*}
	\Rmnum{1}=&\(\frac{\beta}{\beta-1}-\frac{\beta}{k-1}(\frac{P}{P-a})^2\)\(\frac{d_\nu\hat G(h_{ii}e_i)}{\hat G}+\dfrac{1+\frac{1}{k-1}\frac{P}{P-a}-\frac{\beta}{k-1}(\frac{P}{P-a})^2}{\frac{\beta}{\beta-1}-\frac{\beta}{k-1}(\frac{P}{P-a})^2}\frac{\nn_iJ}{J}\)^2\\
	&-\dfrac{\(1+\frac{1}{k-1}\frac{P}{P-a}-\frac{\beta}{k-1}(\frac{P}{P-a})^2\)^2}{\frac{\beta}{\beta-1}-\frac{\beta}{k-1}(\frac{P}{P-a})^2}\frac{|\nn J|^2}{J^2}\\
	&+\(1-\frac{k}{\beta(k-1)}+\frac{2}{k-1}\frac{P}{P-a}-\frac{\beta}{k-1}(\frac{P}{P-a})^2\)\frac{|\nn J|^2}{J^2}+C\frac{|\nn J|}{J}\\
	\le&-\frac{\frac{1}{(k-1)(\beta-1)}(\frac{a}{P-a})^2}{\frac{\beta}{\beta-1}-\frac{\beta}{k-1}(\frac{P}{P-a})^2}\frac{|\nn J|^2}{J^2}+C\frac{|\nn J|}{J}+CH\\
	\le& C_{55}+CH,
\end{align*}
where we have used Cauchy-Schwartz inequality and $0<\beta<1$  in the last inequality. In summary, we have proved this claim.

At the same time, we have
$$\beta-1+\frac{d_\nu \hat G(\nu)}{\hat G}\ge0.$$
Thus, by (\ref{5.33}),
\begin{equation}\label{5.37}
\cL H\le C_{56}+C_{57}H
+\frac{\beta}{k}\frac{\hat GJ}{\sigma_{k}}\sigma_{k}^{ij}(h_{ij})^2H.
\end{equation}
The next step is to get the evolution equation of $P=\hat GJ$.
\begin{equation}\label{5.38}
\begin{split}
\cL P=&\frac{P}{\hat G}\(d_X\hat G(X)(1-\frac{P}{u})
+d_\nu\hat G(u\nn(\frac{P}{u}))\)
+\frac{\beta}{k}\frac{P}{\sigma_{k}}\sigma_{k}^{ij}\(u\nn^2_{ij}(\frac{P}{u})-(1-\frac{P}{u})h^i_j\\
&+2h_{il}\nn^l(\frac{P}{u})\nn_j\Phi\)-\frac{\beta}{k}\frac{\hat GJ}{\sigma_{k}}\sigma_{k}^{ij}\nn_{ij}^2P-J(d_\nu \hat G(e_i)-\frac{\hat G}{u}\nn^i\Phi)\nn_iP\\
=&P(1-\frac{P}{u})\(\frac{d_X\hat G(X)}{\hat G}-\beta\)-\frac{P^2d_\nu\hat G(\frac{\nn u}{u})}{\hat G}-\frac{\beta}{k}\frac{P^2}{\sigma_{k}}\sigma_{k}^{ij}\frac{\nn^2_{ij}u}{u}+\frac{P}{u}\nn^i\Phi\nn_iP\\
=&P(1-\frac{P}{u})\(\frac{d_X\hat G(X)}{\hat G}-\beta\)-\beta\frac{P^2}{u}+\frac{\beta}{k}\frac{P^2}{\sigma_{k}}\sigma_{k}^{ij}h^2_{ij}+\frac{P^2}{u}\nn^i\Phi\frac{d_X\hat G(e_i)}{\hat G}\\
=&C_{58}+\frac{\beta}{k}\frac{P^2}{\sigma_{k}}\sigma_{k}^{ij}h^2_{ij}.
\end{split}
\end{equation}
Combining (\ref{5.37}) and (\ref{5.38}), we derive
\begin{equation}\label{5.39}
\begin{split}
\cL\theta=&\frac{\cL H}{P-a}-\frac{H\cL P}{(P-a)^2}\\
\le&C_{59}+C_{60}H
-\frac{\beta}{k}\frac{P}{P-a}\frac{a}{P-a}\frac{\sigma_{k}^{ij}}{\sigma_{k}}(h_{ij})^2H.
\end{split}
\end{equation}
Without loss of generality, we assume that $\k_1\ge\cdots\ge\k_n$ as before. By Proposition \ref{p2.6} and $H\le n\k_1$, we have
$$\sigma_{k}^{ij}(h_{ij})^2\ge \sigma_{k-1,1}\k_1^2\ge\frac{k}{n^2}\sigma_{k}H.$$
Thus
$$\cL\theta\le C_{59}+C_{60}H-C_{61}H^2,$$
which implies an upper bound of $\theta$. Hence, similar to Lemma \ref{l5.1}, $H$ is bounded along the flow (\ref{1.10}) and we complete this proof.
\end{proof}

\begin{proof of theorem 3.1 and 3.2}

The estimates obtained in Lemma \ref{l4.2}, \ref{l5.1}, \ref{l5.3}, Corollary \ref{c4.10} and \ref{c4.11} depend on $\beta$, $\Vert G\Vert_{C^0}$, $\Vert G\Vert_{C^1}$, $\Vert G\Vert_{C^2}$ and the geometry of the initial data $M_0$. They are independent of $T$. By Lemma \ref{l4.2}, \ref{l5.1}, \ref{l5.3}, Corollary \ref{c4.10} and \ref{c4.11}, we conclude that the equation (\ref{2.17}) is uniformly parabolic. By the $C^0$-estimate (Lemma \ref{l4.2}), the gradient estimate (Corollary \ref{c4.10} and \ref{c4.11}), the $C^2$-estimate (Lemma \ref{l5.1} and \ref{l5.3}) and the Krylov's and Nirenberg's theory \cite{KNV,LN}, we get the H$\ddot{o}$lder continuity of $D^2\rho$ and $\rho_t$. Then we can get higher order derivation estimates by the regularity theory of the uniformly parabolic equations. Hence we obtain the long time existence and $C^\infty$-smoothness of solutions for the flows (\ref{1.9}) and (\ref{1.10}). The uniqueness of smooth solutions also follows from the parabolic theory. 

By Lemma \ref{l4.2}, we can find the sign of $Q-1$ is preserved along flow (\ref{1.8}) for all $t\ge0$. Moreover, due to the $C^0$-estimate,
$$\vert \rho(\cdot,t)-\rho(\cdot,0)\vert=\vert\int_{0}^t\(\Psi(GF^{-\beta})-\Psi(1)\)\rho(\cdot,s)ds\vert<\infty.$$
Therefore, in view of the monotonicity of $\rho$, the limit $\rho(\cdot,\infty):=\lim_{t_i\rightarrow\infty}\rho(\cdot,t)$ exists and is positive, smooth and star-shaped. Thus the hypersurface with radial function $\rho(\cdot,\infty)$ is our desired solution to $$GF^{-\beta}=1.$$
This completes the proof of Theorems \ref{t3.1} and \ref{t3.2}.

\end{proof of theorem 3.1 and 3.2}


\section{The uniqueness results}
In the last section, we shall derive three uniqueness results.
\begin{theorem}\label{t6.1}
Suppose $F$ is a positive, homogeneous of degree $1$ and monotonically increasing function in $\Gamma$, $G(X,\nu)\in C^\infty(B_{r_2}\backslash B_{r_1}\times\mS^n)$ is a positive function if $\la X,\nu\ra>0$, and for any fixed unit vector $\nu$, $\frac{\p}{\p\rho}(\rho G)<0$. Then the star-shaped solution of (\ref{1.1}) is unique.
\end{theorem}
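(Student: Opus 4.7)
The plan is a radial-dilation comparison: I would compare $M_1$ with a suitable scaling of $M_2$ and derive a contradiction from the ellipticity of $F$ together with the monotonicity condition $\partial_\rho(\rho G)<0$. Let $\rho_1,\rho_2\in C^\infty(\mS^n)$ denote the radial functions of two star-shaped solutions $M_1,M_2$ of (\ref{1.1}), and set
$$t_0:=\max_{z\in\mS^n}\frac{\rho_1(z)}{\rho_2(z)}.$$
The goal is to show $t_0\le 1$; swapping the roles of $\rho_1$ and $\rho_2$ then forces $\rho_1\equiv\rho_2$. I would argue by contradiction: suppose $t_0>1$ and let $z_0$ be a maximizer.

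Next I would consider the dilated hypersurface $\widetilde M_2:=t_0 M_2$, with radial function $t_0\rho_2$. By the choice of $t_0$, one has $\rho_1\le t_0\rho_2$ on $\mS^n$ with equality at $z_0$, so $\widetilde M_2$ encloses $M_1$ and they are internally tangent at the common point $X_0:=\rho_1(z_0)z_0=t_0\rho_2(z_0)z_0$, sharing the same tangent hyperplane and outward normal $\nu$. Internal tangency yields the matrix comparison $h_{ij}^{M_1}(X_0)\ge h_{ij}^{\widetilde M_2}(X_0)$, and the induced metrics at $X_0$ also agree, so the sorted principal curvatures satisfy $\kappa_i^{M_1}(X_0)\ge\kappa_i^{\widetilde M_2}(X_0)$. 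Monotonicity of $F$ on $\Gamma$ then gives
$$F\!\left(\kappa^{M_1}(X_0)\right)\ge F\!\left(\kappa^{\widetilde M_2}(X_0)\right).$$

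The third step converts this into a pure inequality on $G$ by homogeneity. Dilation by $t_0$ rescales principal curvatures by $1/t_0$ and leaves normal directions in $\mathbb R^{n+1}$ invariant, so $F(\kappa^{\widetilde M_2}(X_0))=F(\kappa^{M_2}(X_0/t_0))/t_0$ and $\nu^{\widetilde M_2}(X_0)=\nu^{M_2}(X_0/t_0)=\nu$. Substituting the equation $F(\kappa^{M_j})=G(X,\nu)$ on each $M_j$ reduces the displayed inequality to $G(X_0,\nu)\ge t_0^{-1}G(X_0/t_0,\nu)$. Writing $s_1:=\rho_1(z_0)=t_0\rho_2(z_0)=:t_0 s_2$ and multiplying by $s_2$ yields
$$s_1\,G(s_1 z_0,\nu)\ge s_2\,G(s_2 z_0,\nu).$$
However, the hypothesis $\partial_\rho(\rho G(\rho z_0,\nu))<0$ says that $s\mapsto sG(sz_0,\nu)$ is strictly decreasing on $[r_1,r_2]$, so together with $s_1>s_2$ it forces the opposite strict inequality. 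This contradiction completes the proof.

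The only delicate point is justifying the matrix comparison $h^{M_1}(X_0)\ge h^{\widetilde M_2}(X_0)$ at the tangency. I would derive it from the pointwise Hessian inequality $D^2\rho_1(z_0)\le t_0 D^2\rho_2(z_0)$ coming from $D^2(\rho_1-t_0\rho_2)(z_0)\le 0$, combined with the formula (\ref{2.14}) expressing $h_{ij}$ in terms of $\gamma=\log\rho$ and its spherical derivatives; the key observation is that $\rho_1(z_0)=t_0\rho_2(z_0)$ and $D\rho_1(z_0)=t_0 D\rho_2(z_0)$ make the prefactors $\rho/\omega$ and the first-order terms $\gamma_i\gamma_j+e_{ij}$ coincide for $M_1$ and $\widetilde M_2$ at $z_0$, so the Hessian inequality for $\gamma$ transfers directly to $h_{ij}$. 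Once this geometric comparison is in hand the rest is algebra, and importantly no concavity of $F$ or extra structural hypothesis on $G$ is needed beyond what is stated.
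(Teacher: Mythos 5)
Your proposal is correct and is essentially the paper's own argument: the paper compares the two solutions at an extremum of $\gamma_1-\gamma_2=\log(\rho_1/\rho_2)$, using the graph formula (\ref{2.14}), the monotonicity and degree-one homogeneity of $F$, and the strict decrease of $\rho\mapsto\rho G$, which is exactly what your dilation-and-tangency packaging amounts to after scaling back from $t_0M_2$ to $M_2$. Only a trivial slip: the inequality $s_1G(s_1z_0,\nu)\ge s_2G(s_2z_0,\nu)$ follows from multiplying $G(X_0,\nu)\ge t_0^{-1}G(X_0/t_0,\nu)$ by $s_1=t_0s_2$, not by $s_2$.
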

\begin{proof}
Let $X_1$ and $X_2$ are two star-shaped solutions of (\ref{1.1}). We have
\begin{equation}\label{6.1}
F(\k(X_l))=G(X_l,\nu(X_l)),\quad l=1,2.
\end{equation}
Let $\rho_l$ be the radial function of $X_l$, then $\g_l=\log \rho_l$. We shall prove that for any $\xi\in\mS^n$, we have
\begin{equation}\label{6.2}
\rho_1(\xi)\ge\rho_2(\xi).
\end{equation}
It is easy to see that (\ref{6.2}) would imply that $\g_1=\g_2$, and then $\rho_1=\rho_2$. To prove (\ref{6.2}), we define a function $\theta(\xi)=\g_1(\xi)-\g_2(\xi)$, and let $\xi_0\in\mS^n$ is the point where $\theta$ achieves its minimum. Then we have that $D\theta=0$ and $D^2\theta\ge0$ at $\xi_0$, that is $D\g_1=D\g_2$, $\om_1=\om_2$ and $D^2\g_1\ge D^2\g_2$.
By (\ref{6.1}), we have
\begin{align*}
F([\delta_{ij}-(e^{ik}-\frac{D_i\gamma_lD_k\gamma_l}{\omega^2})D^2_{kj}\gamma_{l}])=&\rho_l\om_lG(\rho_l\frac{\p}{\p\rho},\frac{1}{\om_l}(\frac{\p}{\p\rho}-\rho_l^{-2}D_i\rho_l\frac{\p}{\p x_i}))\\
=&\rho_l\om_lG(\rho_l\frac{\p}{\p\rho},\frac{1}{\om_l}(\frac{\p}{\p\rho}-D_i\g_le_i)),
\end{align*}
where $e_i=\rho^{-1}\frac{\p}{\p x_i}$ is an orthonormal basis on $\mS^n$.
 Note that $D\g_1=D\g_2$, $\om_1=\om_2$ and $D^2\g_1\ge D^2\g_2$. Thus we can let $\nu=\frac{1}{\om_l}(\frac{\p}{\p\rho}-D_i\g_le_i)$ be a unit vector. Then
\begin{align*}
\rho_1\om_1G(\rho_1\frac{\p}{\p\rho},\nu)=&
F([\delta_{ij}-(e^{ik}-\frac{D_i\gamma_1D_k\gamma_1}{\omega^2})D^2_{kj}\gamma_{1}])\\
\le& F([\delta_{ij}-(e^{ik}-\frac{D_i\gamma_2D_k\gamma_2}{\omega^2})D^2_{kj}\gamma_{2}])\\
=&\rho_2\om_2G(\rho_2\frac{\p}{\p\rho},\nu).
\end{align*}
In other words, $\rho_1G(X_1,\nu)\le \rho_2G(X_2,\nu)$, which implies that $\rho_1(\xi_0)\ge\rho_2(\xi_0)$ by $\frac{\p}{\p\rho}(\rho G)<0$. Thus we complete this proof.
\end{proof}
Finally we state two uniqueness results which are proved by flows.
	
	
	
	

	
	
	
	
	
\begin{corollary}\label{c6.2}
	Let $G^\frac{1}{\beta}=G^\frac{1}{\beta}(u,\rho)$ satisfy (\ref{1.2}), $F\in C^2(\Gamma_+)\cap C^0(\p\Gamma_+)$ satisfy Assumption \ref{a1.3}, and let $M_0$ be a closed, smooth, star-shaped hypersurface in $\mathbb{R}^{n+1}$, $n\ge2$, enclosing the origin. Suppose
	
	$(i)$ for any fixed $\rho>0$, $uG$ is log-convex with respect to $u$;
	
	$(ii)$ $\frac{uG_u}{G}\le\beta$;
	
	$(iii)$ $\frac{uG_u}{G}+\frac{\rho G_\rho}{G}+\beta\le0.$
	
	\noindent Then flow (\ref{1.9}) has a unique smooth, admissible  solution $M_t$ for all time $t>0$. And
	$M_t$ converges exponentially to a round sphere centered at the origin in the $C^\infty$-topology.
	
	This means that the solutions of $G(u,\rho)F^{-\beta}=1$ must be a sphere whose radius $R$ satisfies $G(R,R)R^\beta=1$.
\end{corollary}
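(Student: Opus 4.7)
The plan is to apply Theorem \ref{t3.1} to the flow (\ref{1.9}) to obtain long-time existence and $C^\infty$-subconvergence to a solution of $GF^{-\beta}=1$, then exploit the rotational symmetry $G=G(u,\rho)$ via a maximum-principle analysis on $\rho(\cdot,t)$ to identify the limit as a definite sphere, upgrade subconvergence to exponential $C^\infty$-convergence, and finally deduce uniqueness of elliptic solutions. Conditions (i)--(iii) translate directly into the hypotheses of Theorem \ref{t3.1}: using $d_\nu G(v) = G_u\la X,v\ra$, condition (ii) reads $d_\nu G(\nu)/G \le \beta$; the identity $\p_\nu^2 \log(uG)[v,v] = \la X,v\ra^2\, \p_u^2 \log(uG)$ shows (i) is precisely log-convexity of $uG$ in $\nu$; and along a ray $X=\rho\xi$ with $\nu$ fixed, $\p_\rho(\rho G^{1/\beta}) = (G^{1/\beta}/\beta)(\beta + (uG_u + \rho G_\rho)/G)$, so (iii) matches (up to strict versus non-strict) condition (iv) of Assumption \ref{a1.4} applied to $G^{1/\beta}$, which a brief perturbation reduces to the strict case. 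The sign $(Q-1)|_{M_0}$ and the bound $r_1\le\rho(M_0)\le r_2$ needed to invoke Theorem \ref{t3.1} are enforced by the barrier spheres $|X|=r_1$ and $|X|=r_2$, which by Assumption \ref{a1.1} are sub- and supersolutions of (\ref{1.9}).

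Next I would identify the limit sphere. Set $f(\rho):=\rho^\beta G(\rho,\rho)$. Evaluating (iii) on the diagonal $u=\rho$ gives $f'(\rho)=\rho^{\beta-1}G(\rho,\rho)\cdot(\beta + (uG_u+\rho G_\rho)/G)|_{u=\rho}\le 0$, so $f$ is non-increasing on $[r_1,r_2]$, while Assumption \ref{a1.1} forces $f(r_1)\ge 1/\eta\ge f(r_2)$, so some $R\in[r_1,r_2]$ satisfies $f(R)=1/\eta$. On the sphere $\{|X|=R\}$, $\k_i=1/R$ and $u=\rho=R$, so $GF^{-\beta} = G(R,R)R^\beta\eta = 1$; this is the target. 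The heart of the argument is then a maximum principle on $\rho$. At a spatial maximum, $D\rho = 0$ forces $\nu = X/\rho$ and $u=\rho$, while $D^2\rho\le 0$ combined with (\ref{4.1}) forces $\k_i \ge 1/\rho$ for every $i$; the monotonicity and degree-$1$ homogeneity of $F$ then give $F^{-\beta}\le\eta\rho^\beta$, so
\be
\tfrac{d}{dt}\rho_{\max} \le \rho_{\max}\bigl(G(\rho_{\max},\rho_{\max})F^{-\beta}-1\bigr) \le \eta\,\rho_{\max}\bigl(f(\rho_{\max})-f(R)\bigr),\no
\ee
and symmetrically $\tfrac{d}{dt}\rho_{\min}\ge \eta\,\rho_{\min}(f(\rho_{\min})-f(R))$. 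Since $f$ is non-increasing, these inequalities force $\rho_{\max}\searrow R$ and $\rho_{\min}\nearrow R$ in $C^0$; combining with the uniform higher-order estimates of Section 5 and standard interpolation yields $C^\infty$-convergence to the sphere of radius $R$. Uniqueness of the elliptic equation then follows at once: any star-shaped admissible solution $M$ of $GF^{-\beta}=1$ is stationary along the flow, so the flow with $M_0=M$ is constant and must coincide with the $C^\infty$-limit $\{|X|=R\}$.

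The main obstacle is upgrading the $C^0$-convergence of $\rho_{\max},\rho_{\min}$ to $R$ to an exponential rate, which requires a strict linearization $f(\rho)-f(R)\le -c(\rho-R)$ in a neighborhood of $R$. Condition (iii) only guarantees $f'\le 0$, so strict decrease at $R$ must be forced by a separate argument — for instance, by propagating the strict inequalities in (\ref{1.2}) across $[r_1,r_2]$ via (iii), or by approximating $G$ by a strictly decreasing sequence $G_\eps$ and passing to the limit. Once such a rate is secured, the scalar ODE differential inequalities for $\rho_{\max}$ and $\rho_{\min}$ yield exponential decay of $\rho_{\max}-R$ and $R-\rho_{\min}$, and interpolation with the uniform $C^k$ estimates of Section 5 upgrades this to exponential convergence in the $C^\infty$-topology, completing the proof.
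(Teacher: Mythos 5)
Your reduction to Theorem \ref{t3.1} does not go through as stated, for two reasons. First, for a $u$-dependent $G$ the relevant cases of Assumption \ref{a1.4} are (\rmnum{2}) or (\rmnum{3}), and these require a \emph{strict} inequality with a definite constant $\eps$ (case (\rmnum{4}), which you invoke, requires $\varphi=1$, i.e.\ no $u$-dependence); your condition $(iii)$ is only non-strict, and the $\eps$-perturbation you suggest does not yield estimates uniform in $\eps$ — the gradient bound of Lemma \ref{l4.5} is obtained precisely where $-\eps'\vert D\gamma\vert^2$ dominates the remaining terms, so the constant degenerates like $1/\eps$. Second, Theorem \ref{t3.1} assumes $(GF^{-\beta}-1)\vert_{M_0}$ has a sign; this is a hypothesis on the given initial hypersurface and cannot be ``enforced by barrier spheres'', and the corollary makes no such assumption. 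The paper avoids both problems by redoing the $C^1$-estimate directly for $G=G(u,\rho)$: in the evolution (\ref{4.6})--(\ref{4.7}) of $\tfrac12\vert D\gamma\vert^2$ all tangential-derivative terms of $G$ vanish, condition $(iii)$ kills the zeroth-order term even in its non-strict form, and the leftover curvature term $\tfrac{\beta F^{ij}}{F}(\gamma_i\gamma_j-\vert D\gamma\vert^2\delta_{ij})\le\tfrac{\beta}{F}(\max_iF^{ii}-\sum_iF^{ii})\vert D\gamma\vert^2\le -C_0\vert D\gamma\vert^2$ (strictly negative since $n\ge2$ and $F$, $F^{ij}$ are uniformly controlled) gives the gradient bound \emph{and} the exponential rate, with no sign condition; the bounds on $Q$ and $F$ come from the ``moreover'' part of Lemma \ref{l4.3}, which under $(iii)$ also needs no sign condition.

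The more serious gap is in your convergence mechanism. Conditions $(i)$--$(iii)$ only give $f'\le0$ for $f(\rho)=\rho^\beta G(\rho,\rho)$, so $f$ may equal $1/\eta$ on a whole interval $[R_-,R_+]$; every sphere with radius in that interval is then stationary, and your radial ODE comparison only says $\rho_{\max}$ is non-increasing and $\rho_{\min}$ non-decreasing once they lie in that interval. Nothing in your argument forces them to a common limit, i.e.\ nothing forces the hypersurface to become round — so the claim ``$\rho_{\max}\searrow R$ and $\rho_{\min}\nearrow R$'' is false in general, and the gap you flagged (the exponential rate) is actually preceded by a gap in plain convergence. The paper's exponential decay of $\vert D\gamma\vert$ is exactly the ingredient that forces roundness and the rate without any strictness of $f$; convergence of the radius and the identification $G(R,R)R^\beta=1$ then follow, and your final stationarity argument for classifying solutions of the elliptic equation coincides with the paper's and is fine once that convergence is in place.
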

\textbf{Remark}: If $G=u^{\a-1}\rho^\delta$, we could prove the long time existence and  convergence of this flow with $\a+\delta+\beta\le1$, $\a\le0$.
\begin{corollary}\label{c6.3}
	Let $\wg^{-\frac{1}{\beta}}=\wg^{-\frac{1}{\beta}}(u,\rho)$ satisfy (\ref{1.2}), $F=\sigma_{k}^\frac{1}{k}$, $2\le k\le n-1$, and let $M_0$ be a closed, smooth, star-shaped, strictly $k$-convex hypersurface in $\mathbb{R}^{n+1}$, $n\ge2$, enclosing the origin. Suppose
	
	$(i)$ either (\rmnum{1}a) for any fixed $\rho>0$, $u\wg$ is log-concave with respect to $u$ for $\beta\ge k$,
	
	or  (\rmnum{1}b) for any fixed $\rho>0$, $u\wg$ is $\frac{1}{1-\beta}$-concave with respect to $u$ for $0<\beta< 1$;
	
	$(ii)$ $\frac{u\wg_u}{\wg}+\beta\ge0$;
	
	$(iii)$ $\frac{u\wg_u}{\wg}+\frac{\rho \wg_\rho}{\wg}-\beta\ge0.$
	
	\noindent Then flow (\ref{1.10}) has a unique smooth, strictly $k$-convex solution $M_t$ for all time $t>0$. And
	$M_t$ converges exponentially to a round sphere centered at the origin in the $C^\infty$-topology.
	
	This means that the solutions of $\wg(u,\rho)\sigma_{k}^\frac{\beta}{k}=1$ must be a sphere whose radius $R$ satisfies $\wg(R,R)R^{-\beta}=1$.
\end{corollary}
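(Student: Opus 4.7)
The plan is to specialise Theorem \ref{t3.2} to $\wg=\wg(u,\rho)$, use envelope inequalities along the flow to force any subsequential limit to be a sphere, and finally linearise at that sphere to obtain the exponential rate.

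I would first translate $(i)$--$(iii)$ into the hypotheses of Theorem \ref{t3.2}. Since $u=\la X,\nu\ra$ depends linearly on $\nu$ with $\p u/\p\nu^\a=X^\a$ and $\rho=|X|$ is $\nu$-independent, one computes $d_\nu\wg(\nu)=u\wg_u$ and $d_\nu d_\nu(u\wg)=(u\wg)_{uu}\,X\otimes X$, a rank-one positive semidefinite tensor. Therefore log-concavity (resp.\ $\tfrac{1}{1-\beta}$-concavity) of $u\wg$ in $\nu$ reduces to its scalar analogue in $u$, so $(i)$ matches the $\nu$-concavity hypothesis of Theorem \ref{t3.2}; $(ii)$ is literally $d_\nu\wg(\nu)/\wg+\beta\ge0$; and writing $G=\wg^{-1/\beta}$, $\varphi\equiv1$, $\phi=G$, a chain rule gives
\begin{equation*}
\tfrac{\p}{\p\rho}\bigl(\rho G\bigr)=G\Bigl[1-\tfrac{1}{\beta}\tfrac{u\wg_u+\rho\wg_\rho}{\wg}\Bigr]\le0,
\end{equation*}
so $(iii)$ is Assumption \ref{a1.4}$(i)$. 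The sign condition on $(Q-1)|_{M_0}$ is arranged by first running the flow from the barrier spheres $S_{r_1}$ and $S_{r_2}$, on which $Q=\Phi(r_i):=\wg(r_i,r_i)(C_n^k)^{\beta/k}r_i^{-\beta}$ satisfies $\Phi(r_1)\le1\le\Phi(r_2)$ by (\ref{1.2}), and then sandwiching any other admissible $M_0$ via the comparison principle. Hence Theorem \ref{t3.2} yields long-time $C^\infty$ existence of $M_t$ and a subsequential $C^\infty$-limit $M_\infty$ solving $\wg\sigma_k^{\beta/k}=1$.

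To show $M_\infty=S_R$, note that $(iii)$ yields $\Phi'(r)\ge 0$, hence by (\ref{1.2}) there exists $R\in[r_1,r_2]$ with $\Phi(R)=1$. At a spatial maximum of $\rho$ on $M_\infty$, $D\rho=0$ forces $u=\rho_{\max}$, while $D^2\rho\le 0$ combined with (\ref{4.1}) gives $\k_i\ge 1/\rho_{\max}$ and hence $\sigma_k^{\beta/k}\ge(C_n^k)^{\beta/k}\rho_{\max}^{-\beta}$. Therefore
\begin{equation*}
1=\wg\sigma_k^{\beta/k}\ge\Phi(\rho_{\max})\ge\Phi(R)=1,
\end{equation*}
so $\rho_{\max}\le R$ by monotonicity of $\Phi$; the dual argument at the minimum gives $\rho_{\min}\ge R$, and therefore $\rho_{\min}=\rho_{\max}=R$, i.e.\ $M_\infty=S_R$. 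In particular every smooth, star-shaped, strictly $k$-convex solution of $\wg\sigma_k^{\beta/k}=1$ is this sphere, which is the stated uniqueness.

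Since every subsequential $C^\infty$-limit of $M_t$ equals $S_R$, the whole flow converges to $S_R$ in $C^\infty$. For the exponential rate, write $\log\rho=\log R+\tilde\gamma$ with $\tilde\gamma$ small; expanding (\ref{1.10}) to first order (using $u\approx\rho$ and the elementary expansion of $\sigma_k^{\beta/k}$ near the umbilical point) gives
\begin{equation*}
\p_t\tilde\gamma=\tfrac{\beta}{n}\Delta_{\mS^n}\tilde\gamma-A\,\tilde\gamma+O(\tilde\gamma^2),\qquad A=\Bigl[\tfrac{u\wg_u+\rho\wg_\rho}{\wg}-\beta\Bigr]_{u=\rho=R}\ge0.
\end{equation*}
The self-adjoint operator $\tfrac{\beta}{n}\Delta_{\mS^n}-A$ has eigenvalues $-\tfrac{\beta}{n}l(l+n-1)-A$, so every non-constant spherical-harmonic mode decays at rate at least $\beta$ (using $l(l+n-1)\ge n$ for $l\ge 1$), and a standard parabolic bootstrap from the already-established $C^\infty$-pre-convergence upgrades this to $\|\tilde\gamma-\overline{\tilde\gamma}\|_{C^k}\le C_ke^{-\beta t}$; the constant mode $\overline{\tilde\gamma}(t)$ is controlled by $A$, which is strictly positive whenever $\Phi$ is strictly increasing at $R$.

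The main obstacle is the borderline case $A=0$: the linear decay of the constant mode vanishes and one must either invoke the strict concavity in $(i)$ to extract nonlinear decay, or construct evolving radial barriers $S_{r_\pm(t)}$ whose radii satisfy the ODE $\dot r=r(1-\Phi(r))$—for which $R$ is an asymptotically stable equilibrium under the given sign conventions—and apply the comparison principle to squeeze $M_t$. Once the decay of the constant mode is secured, full exponential convergence in $C^\infty$ follows.
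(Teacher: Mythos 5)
Your translation of $(i)$--$(iii)$ into the hypotheses of Theorem \ref{t3.2} is correct, but the way you feed an arbitrary $M_0$ into that theorem has a genuine gap: Theorem \ref{t3.2} requires $(\wg\sigma_k^{\beta/k}-1)|_{M_0}$ to have a sign, and this is a pointwise property of the initial hypersurface that cannot be ``arranged'' by sandwiching $M_0$ between the barrier spheres $S_{r_1},S_{r_2}$ --- comparison controls $\rho(\cdot,t)$, not the sign of $Q-1$ on $M_0$. Corollary \ref{c6.3} deliberately drops that hypothesis, and the paper's proof compensates exactly at this point: because $\wg=\wg(u,\rho)$, the gradient computation of Lemma \ref{l4.5} (i.e. (\ref{4.6})--(\ref{4.7})) collapses, condition $(iii)$ (equivalently $\frac{uG_u}{G}+\frac{\rho G_\rho}{G}+\beta\le 0$ for $G=1/\wg$) kills the zeroth-order term, and what remains is $\p_tO_{\max}\le Q\Psi'\frac{\beta F^{ij}}{F}(\g_i\g_j-|D\g|^2\delta_{ij})\le -C_0O_{\max}$, using $\max_iF^{ii}-\sum_iF^{ii}\le-C_0$. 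This yields $|D\rho|/\rho\le Ce^{-C_0t}$ with no sign condition and no monotonicity of $\rho$; the estimates of Sections 4--5 plus interpolation then give full exponential $C^\infty$-convergence to a sphere, and uniqueness follows because any solution of $\wg\sigma_k^{\beta/k}=1$ is stationary under flow (\ref{1.10}) and hence must coincide with its own spherical limit.

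Your substitute arguments for the convergence step do not close. The max/min rigidity for the limit uses monotonicity of $\Phi(r)=\wg(r,r)(C_n^k)^{\beta/k}r^{-\beta}$, but $(iii)$ only gives $\Phi'\ge0$: from $\Phi(\rho_{\max})\le1=\Phi(R)$ you cannot conclude $\rho_{\max}\le R$, and in the admissible borderline case (e.g. $\wg=u^{\a-1}\rho^{\delta}$ with $\a+\delta-\beta=1$, explicitly allowed by the Remark after the corollary) $\Phi$ is constant, so the squeeze, the selection of a unique $R$, and the promotion of subsequential to full convergence all break down. The same degeneracy hits your linearization: $A=R\,\Phi'(R)$ may vanish, the constant spherical-harmonic mode then has no linear decay, and your fallback ODE $\dot r=r(1-\Phi(r))$ has a continuum of equilibria rather than an asymptotically stable one, so it cannot restore the exponential rate; you name this obstacle but leave it open. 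The paper's mechanism --- exponential decay of $\frac12|D\g|^2$ --- is insensitive to whether $\Phi$ is strictly increasing, which is precisely why it delivers the stated exponential convergence (and the uniqueness statement) in all cases covered by the corollary.
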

\textbf{Remark}: If $\wg=u^{\a-1}\rho^\delta$, we could prove the long time existence and  convergence of this flow with $\a+\delta-\beta\ge1$, either $\a\ge0$, $\beta\ge k$ or $\a=1-\beta$, $0<\beta\le1$.

\begin{proof of corollaries 6.2 and 6.3}
It suffices to improve the $C^1$-estimate and convergence by the proof of Theorems \ref{t3.1} and \ref{t3.2}.  Recall the proof of Lemma \ref{l4.5} and note that $\wg=\frac{1}{G}$, we get according to (\ref{4.6}) and (\ref{4.7}),
\begin{align}
	\p_tO_{\max}\le&Q\Psi'\((\frac{uG_u}{G}+\frac{\rho G_\rho}{G}+\beta)|D\g|^2+\frac{\beta F^{ij}}{F}(-\g_{li}\g_{lj}+\g_i\g_j-\vert D\g\vert^2\delta_{ij})\)\notag\\
	\le&Q\Psi'\frac{\beta F^{ij}}{F}(\g_i\g_j-\vert D\g\vert^2\delta_{ij}).
\end{align}
Similar to Lemma \ref{l4.5}, since $F^{ij}\g_i\g_j\leq\max_iF^{ii}\vert D\g\vert^2$, we can derive the $C^1$-estimate by the maximum principle. Then we can get the a priori estimates by Section 5. Meanwhile we have the bound of $Q$, $F^{ii}$ and $F$, i.e. $\max_iF^{ii}-\sum_iF^{ii}\le -C_0$. This proves
	\begin{equation*}
		\max_{\mS^n}\frac{\vert D\rho(\cdot,t)\vert}{\rho(\cdot,t)}\leq Ce^{-C_0t},\forall t>0,
	\end{equation*}
	for both $C$ and $C_0$ are positive constants. In other words, we have that $\parallel D\rho\parallel\to0$ exponentially as $t\to\infty$. Hence by the interpolation and the a priori estimates, we can get that $\rho$ converges exponentially to a constant in the $C^\infty$-topology as $t\to\infty$. 
	By Theorems \ref{t3.1} and \ref{t3.2}, there exists a solution of $G(u,\rho)F^\beta=1$. Since this solution is invariant along flows (\ref{1.9}) or (\ref{1.10}), the solution must be a sphere whose radius $R$ satisfies $G(R,R)R^\beta=1$.
\end{proof of corollaries 6.2 and 6.3}

\section{Reference}
\begin{biblist}
\bib{AA}{article}{
	author={Aleksandrov A. D.},
	title={Uniqueness theorems for surfaces in the large. I},
	language={Russian},
	journal={Vestnik Leningrad. Univ.},
	volume={11},
	date={1956},
	number={19},
	pages={5--17},
	issn={0146-924x},
	review={\MR{0086338}},
}

\bib{B3}{article}{
   author={Andrews B.},
   title={Pinching estimates and motion of hypersurfaces by curvature functions},
   journal={J. Reine Angew. Math.},
   volume={608}
   date={2007},
   pages={17-33},
}

\bib{B4}{article}{
   author={Andrews B.},
   author={ McCoy J.},
   author={ Zheng Y.},
   title={Contracting convex hypersurfaces by curvature},
   journal={Calc. Var. PDEs },
   volume={47}
   date={2013},
   pages={611-665},
}


\bib{BIS}{article}{
	author={Bryan P.},
	author={Ivaki M. N.},
	author={Scheuer J.},
	title={A unified flow approach to smooth, even $L_p$-Minkowski problems},
	journal={Anal. PDE},
	volume={12},
	date={2019},
	number={2},
	pages={259--280},
	issn={2157-5045},
	review={\MR{3861892}},
	doi={10.2140/apde.2019.12.259},
}

\bib{BIS3}{article}{
	author={Bryan P.},
	author={Ivaki M. N.},
	author={Scheuer J.},
	title={Orlicz-Minkowski flows},
	journal={Calc. Var. Partial Differential Equations},
	volume={60},
	date={2021},
	number={1},
	pages={Paper No. 41, 25},
	issn={0944-2669},
	review={\MR{4204567}},
	doi={10.1007/s00526-020-01886-3},
}

\bib{CNS}{article}{
	author={Caffarelli L.},
	author={Nirenberg L.},
	author={Spruck J.},
	title={The Dirichlet problem for nonlinear second-order elliptic
		equations. I. Monge-Amp\`ere equation},
	journal={Comm. Pure Appl. Math.},
	volume={37},
	date={1984},
	number={3},
	pages={369--402},
	issn={0010-3640},
	review={\MR{739925}},
	doi={10.1002/cpa.3160370306},
}

\bib{CNS2}{article}{
	author={Caffarelli L.},
author={Nirenberg L.},
author={Spruck J.},
	title={Nonlinear second order elliptic equations. IV. Starshaped compact
		Weingarten hypersurfaces},
	conference={
		title={Current topics in partial differential equations},
	},
	book={
		publisher={Kinokuniya, Tokyo},
	},
	date={1986},
	pages={1--26},
	review={\MR{1112140}},
}

\bib{CHZ}{article}{
	author={Chen C.},
	author={Huang Y.},
	author={Zhao Y.},
	title={Smooth solutions to the $L_p$ dual Minkowski problem},
	journal={Math. Ann.},
	volume={373},
	date={2019},
	number={3-4},
	pages={953--976},
	issn={0025-5831},
	review={\MR{3953117}},
	doi={10.1007/s00208-018-1727-3},
}

\bib{CCL}{article}{
	author={Chen H.},
	author={Chen S.},
	author={Li Q.},
	title={Variations of a class of Monge-Amp\`ere-type functionals and their
		applications},
	journal={Anal. PDE},
	volume={14},
	date={2021},
	number={3},
	pages={689--716},
	issn={2157-5045},
	review={\MR{4259871}},
	doi={10.2140/apde.2021.14.689},
}

\bib{CL}{article}{
	author={Chen H.},
	author={Li Q.},
	title={The $L_ p$ dual Minkowski problem and related parabolic flows},
	journal={J. Funct. Anal.},
	volume={281},
	date={2021},
	number={8},
	pages={Paper No. 109139, 65},
	issn={0022-1236},
	review={\MR{4271790}},
	doi={10.1016/j.jfa.2021.109139},
}

\bib{CLW}{article}{
	author={Chen D.},
	author={Li H.},
	author={Wang Z.},
	title={Starshaped compact hypersurfaces with prescribed Weingarten
		curvature in warped product manifolds},
	journal={Calc. Var. Partial Differential Equations},
	volume={57},
	date={2018},
	number={2},
	pages={Paper No. 42, 26},
	issn={0944-2669},
	review={\MR{3764587}},
	doi={10.1007/s00526-018-1314-1},
}

\bib{DL2}{article}{
	author={Ding S.},
	author={Li G.},
	title={A class of curvature flows expanded by support function and
		curvature function in the Euclidean space and hyperbolic space},
	journal={J. Funct. Anal.},
	volume={282},
	date={2022},
	number={3},
	pages={Paper No. 109305},
	issn={0022-1236},
	review={\MR{4339010}},
	doi={10.1016/j.jfa.2021.109305},
}

\bib{DL3}{article}{
	author={Ding S.},
author={Li G.},
	title={A class of inverse curvature flows and $L^p$ dual
		Christoffel-Minkowski problem},
	journal={Trans. Amer. Math. Soc.},
	volume={376},
	date={2023},
	number={1},
	pages={697--752},
	issn={0002-9947},
	review={\MR{4510121}},
	doi={10.1090/tran/8793},
}

\bib{DL4}{article}{
	author={Ding S.},
	author={Li G.},
	title={Anisotropic flows without global terms and dual Orlicz Christoffel-Minkowski type problem},
	pages={arXiv:2207.03114},
}

\bib{EH}{article}{
	author={Ecker K.},
	author={Huisken G.},
	title={Immersed hypersurfaces with constant Weingarten curvature},
	journal={Math. Ann.},
	volume={283},
	date={1989},
	number={2},
	pages={329--332},
	issn={0025-5831},
	review={\MR{980601}},
	doi={10.1007/BF01446438},
}

\bib{GHW1}{article}{
	author={Gardner J.},
	author={Hug D.},
	author={Weil W.},
	author={Xing S.},
	author={Ye D.},
	title={General volumes in the Orlicz-Brunn-Minkowski theory and a related
		Minkowski problem I},
	journal={Calc. Var. Partial Differential Equations},
	volume={58},
	date={2019},
	number={1},
	pages={Paper No. 12, 35},
	issn={0944-2669},
	review={\MR{3882970}},
	doi={10.1007/s00526-018-1449-0},
}

\bib{GHW2}{article}{
	author={Gardner J.},
	author={Hug D.},
	author={Xing S.},
	author={Ye D.},
	title={General volumes in the Orlicz-Brunn-Minkowski theory and a related
		Minkowski problem II},
	journal={Calc. Var. Partial Differential Equations},
	volume={59},
	date={2020},
	number={1},
	pages={Paper No. 15, 33},
	issn={0944-2669},
	review={\MR{4040624}},
	doi={10.1007/s00526-019-1657-2},
}

\bib{GC2}{book}{
	author={Gerhardt C.},
	title={Curvature problems},
	series={Series in Geometry and Topology},
	volume={39},
	publisher={International Press, Somerville, MA},
	date={2006},
	pages={x+323},
	isbn={978-1-57146-162-9},
	isbn={1-57146-162-0},
	review={\MR{2284727}},
}

\bib{GG}{article}{
	author={Guan B.},
	author={Guan P.},
	title={Convex hypersurfaces of prescribed curvatures},
	journal={Ann. of Math. (2)},
	volume={156},
	date={2002},
	number={2},
	pages={655--673},
	issn={0003-486X},
	review={\MR{1933079}},
	doi={10.2307/3597202},
}

\bib{GL}{article}{
	author={Guan P.},
	author={Li J.},
	title={A mean curvature type flow in space forms},
	journal={Int. Math. Res. Not. IMRN},
	date={2015},
	number={13},
	pages={4716--4740},
	issn={1073-7928},
	review={\MR{3439091}},
	doi={10.1093/imrn/rnu081},
}

\bib{GLL}{article}{
	author={Guan P.},
author={Li J.},
	author={Li Y.},
	title={Hypersurfaces of prescribed curvature measure},
	journal={Duke Math. J.},
	volume={161},
	date={2012},
	number={10},
	pages={1927--1942},
	issn={0012-7094},
	review={\MR{2954620}},
	doi={10.1215/00127094-1645550},
}

\bib{GLM}{article}{
	author={Guan P.},
	author={Lin C.},
	author={Ma X.-N.},
	title={The existence of convex body with prescribed curvature measures},
	journal={Int. Math. Res. Not. IMRN},
	date={2009},
	number={11},
	pages={1947--1975},
	issn={1073-7928},
	review={\MR{2507106}},
	doi={10.1093/imrn/rnp007},
}

\bib{GM}{article}{
	author={Guan P.},
	author={Ma X.},
	title={The Christoffel-Minkowski problem. I. Convexity of solutions of a
		Hessian equation},
	journal={Invent. Math.},
	volume={151},
	date={2003},
	number={3},
	pages={553--577},
	issn={0020-9910},
	review={\MR{1961338}},
	doi={10.1007/s00222-002-0259-2},
}

\bib{GRW}{article}{
	author={Guan P.},
	author={Ren C.},
	author={Wang Z.},
	title={Global $C^2$-estimates for convex solutions of curvature
		equations},
	journal={Comm. Pure Appl. Math.},
	volume={68},
	date={2015},
	number={8},
	pages={1287--1325},
	issn={0010-3640},
	review={\MR{3366747}},
	doi={10.1002/cpa.21528},
}

\bib{GX}{article}{
	author={Guan P.},
	author={Xia C.},
	title={$L^p$ Christoffel-Minkowski problem: the case $1<p<k+1$},
	journal={Calc. Var. Partial Differential Equations},
	volume={57},
	date={2018},
	number={2},
	pages={Paper No. 69, 23},
	issn={0944-2669},
	review={\MR{3776359}},
	doi={10.1007/s00526-018-1341-y},
}

\bib{HMS}{article}{
	author={Hu C.},
	author={Ma X.},
	author={Shen C.},
	title={On the Christoffel-Minkowski problem of Firey's $p$-sum},
	journal={Calc. Var. Partial Differential Equations},
	volume={21},
	date={2004},
	number={2},
	pages={137--155},
	issn={0944-2669},
	review={\MR{2085300}},
	doi={10.1007/s00526-003-0250-9},
}

\bib{HLY2}{article}{
	author={Huang Y.},
	author={Lutwak E.},
	author={Yang D.},
	author={Zhang G.},
	title={Geometric measures in the dual Brunn-Minkowski theory and their
		associated Minkowski problems},
	journal={Acta Math.},
	volume={216},
	date={2016},
	number={2},
	pages={325--388},
	issn={0001-5962},
	review={\MR{3573332}},
	doi={10.1007/s11511-016-0140-6},
}

\bib{HZ}{article}{
	author={Huang Y.},
	author={Zhao Y.},
	title={On the $L_p$ dual Minkowski problem},
	journal={Adv. Math.},
	volume={332},
	date={2018},
	pages={57--84},
	issn={0001-8708},
	review={\MR{3810248}},
	doi={10.1016/j.aim.2018.05.002},
}

\bib{HGC}{article}{
   author={Huisken G.},
   author={Sinestrari C.},
   title={Convexity estimates for mean curvature flow and singularities of mean convex surfaces},
   journal={Acta Math.},
   volume={183}
   date={1999},
   pages={45-70},
}

\bib{IM}{article}{
   author={Ivaki M.},
   title={Deforming a hypersurface by principal radii of curvature and support function},
   journal={Calc. Var. PDEs},
   volume={58(1)}
   date={2019},
}

\bib{JL}{article}{
	author={Jin Q.},
	author={Li Y.},
	title={Starshaped compact hypersurfaces with prescribed $k$-th mean
		curvature in hyperbolic space},
	journal={Discrete Contin. Dyn. Syst.},
	volume={15},
	date={2006},
	number={2},
	pages={367--377},
	issn={1078-0947},
	review={\MR{2199434}},
	doi={10.3934/dcds.2006.15.367},
}

\bib{KA}{article}{
	author={Kennington A. U.},
	title={Power concavity and boundary value problems},
	journal={Indiana Univ. Math. J.},
	volume={34},
	date={1985},
	number={3},
	pages={687--704},
	issn={0022-2518},
	review={\MR{794582}},
	doi={10.1512/iumj.1985.34.34036},
}

\bib{KNV}{book}{
  author={Krylov N. V.},
     title= {Nonlinear elliptic and parabolic quations of the second order},
 publisher={D. Reidel Publishing Co., Dordrecht},
     date={1987. xiv+462pp},

}

\bib{LRW}{article}{
	author={Li C.},
	author={Ren C.},
	author={Wang Z.},
	title={Curvature estimates for convex solutions of some fully nonlinear
		Hessian-type equations},
	journal={Calc. Var. Partial Differential Equations},
	volume={58},
	date={2019},
	number={6},
	pages={Paper No. 188, 32},
	issn={0944-2669},
	review={\MR{4024603}},
	doi={10.1007/s00526-019-1623-z},
}

\bib{LN}{book}{
  author={Nirenberg L.},
     title= {On a generalization of quasi-conformal mappings and its application to elliptic partial differential equations},
 publisher={Contributions to the theory of partial differential equations, Annals of Mathematics Studies},
     date={ Princeton University Press, Princeton, N. J.,1954, pp. 95C100.}
  }

\bib{LXZ}{article}{
	author={Li H.},
	author={Xu B.},
	author={Zhang R.},
	title={Asymptotic convergence for a class of anisotropic curvature flows},
	journal={J. Funct. Anal.},
	volume={282},
	date={2022},
	number={12},
	pages={Paper No. 109460, 34},
	issn={0022-1236},
	review={\MR{4395335}},
	doi={10.1016/j.jfa.2022.109460},
}

\bib{LSW}{article}{
   author={Li Q.},
   author={Sheng W.},
   author={Wang X-J},
   title={Flow by Gauss curvature to the Aleksandrov and dual Minkowski problems},
   journal={Journal of the European Mathematical Society},
   volume={22}
   date={2019},
   pages={893-923},
}

\bib{LSW2}{article}{
   author={Li Q.},
author={Sheng W.},
author={Wang X-J},
	title={Asymptotic convergence for a class of fully nonlinear curvature
		flows},
	journal={J. Geom. Anal.},
	volume={30},
	date={2020},
	number={1},
	pages={834--860},
	issn={1050-6926},
	review={\MR{4058539}},
	doi={10.1007/s12220-019-00169-4},
}

\bib{LL}{article}{
	author={Liu Y.},
	author={Lu J.},
	title={A flow method for the dual Orlicz-Minkowski problem},
	journal={Trans. Amer. Math. Soc.},
	volume={373},
	date={2020},
	number={8},
	pages={5833--5853},
	issn={0002-9947},
	review={\MR{4127893}},
	doi={10.1090/tran/8130},
}

\bib{LE}{article}{
	author={Lutwak E.},
	title={The Brunn-Minkowski-Firey theory. I. Mixed volumes and the
		Minkowski problem},
	journal={J. Differential Geom.},
	volume={38},
	date={1993},
	number={1},
	pages={131--150},
	issn={0022-040X},
	review={\MR{1231704}},
}

\bib{LO}{article}{
	author={Lutwak E.},
	author={Oliker V.},
	title={On the regularity of solutions to a generalization of the
		Minkowski problem},
	journal={J. Differential Geom.},
	volume={41},
	date={1995},
	number={1},
	pages={227--246},
	issn={0022-040X},
	review={\MR{1316557}},
}

\bib{LYZ2}{article}{
	author={Lutwak E.},
	author={Yang D.},
	author={Zhang G.},
	title={$L_p$ dual curvature measures},
	journal={Adv. Math.},
	volume={329},
	date={2018},
	pages={85--132},
	issn={0001-8708},
	review={\MR{3783409}},
	doi={10.1016/j.aim.2018.02.011},
}

\bib{RW1}{article}{
	author={Ren C.},
	author={Wang Z.},
	title={On the curvature estimates for Hessian equations},
	journal={Amer. J. Math.},
	volume={141},
	date={2019},
	number={5},
	pages={1281--1315},
	issn={0002-9327},
	review={\MR{4011801}},
	doi={10.1353/ajm.2019.0033},
}

\bib{RW2}{article}{
	author={Ren C.},
	author={Wang Z.},
	title={The global curvature estimate for the $n-2$ Hessian equation},
	pages={arXiv:2002.08702},
}

\bib{SR}{book}{
	author={Schneider R.},
	title={Convex bodies: the Brunn-Minkowski theory},
	series={Encyclopedia of Mathematics and its Applications},
	volume={151},
	edition={Second expanded edition},
	publisher={Cambridge University Press, Cambridge},
	date={2014},
	pages={xxii+736},
	isbn={978-1-107-60101-7},
	review={\MR{3155183}},
}

\bib{SX}{article}{
	author={Spruck J.},
	author={Xiao L.},
	title={A note on star-shaped compact hypersurfaces with prescribed scalar
		curvature in space forms},
	journal={Rev. Mat. Iberoam.},
	volume={33},
	date={2017},
	number={2},
	pages={547--554},
	issn={0213-2230},
	review={\MR{3651014}},
	doi={10.4171/RMI/948},
}

\bib{TW}{article}{
	author={Treibergs A. E.},
	author={Wei S. W.},
	title={Embedded hyperspheres with prescribed mean curvature},
	journal={J. Differential Geom.},
	volume={18},
	date={1983},
	number={3},
	pages={513--521},
	issn={0022-040X},
	review={\MR{723815}},
}

\bib{UJ}{article}{
   author={Urbas J.},
   title={An expansion of convex hypersurfaces},
   journal={J. Diff. Geom.},
   volume={33(1)}
   date={1991},
   pages={91-125},
}

\end{biblist}

\end{document}